\documentclass{article}

\usepackage{amsthm,amsmath,mathtools,bm,enumerate,subcaption}

\usepackage[final,nonatbib]{nips_2018}
\usepackage[numbers]{natbib} 
\usepackage[utf8]{inputenc} 
\usepackage[T1]{fontenc}    
\usepackage{hyperref}       
\usepackage{url}            
\usepackage{booktabs}       
\usepackage{amsfonts}       
\usepackage{nicefrac}       
\usepackage{microtype,color}      
\usepackage{graphicx}
\usepackage{animate}

\usepackage[mathcal]{eucal}


\newcommand{\R}{\mathbb{R}}
\newcommand{\N}{\mathbb{N}}

\newcommand{\M}{\mathcal{M}}
\newcommand{\X}{\mathcal{X}}

\newcommand{\V}{\mathcal{V}}

\renewcommand{\P}{\mathcal{P}}
\newcommand{\F}{\mathcal{F}}

\renewcommand{\d}{\mathrm{d}}

\newcommand{\id}{\mathrm{id}}

\newcommand{\qwhereq}{\quad \text{where} \quad}

\renewcommand{\S}{{\mathbb{S}}}

\newcommand{\tint}{{\textstyle{\int}}}

\newcommand{\BL}{\mathrm{BL}}
\newcommand{\Lip}{\mathrm{Lip}}



\DeclareMathOperator{\sign}{sign}

\DeclareMathOperator{\proj}{proj}

\DeclareMathOperator{\spt}{spt}


\newtheorem{theorem}{Theorem}[section]
\newtheorem{proposition}[theorem]{Proposition}
\newtheorem{lemma}[theorem]{Lemma}

\newtheorem{definition}[theorem]{Definition}

\newtheorem{assumptions}[theorem]{Assumptions}
\newtheorem{property}[theorem]{Property}
\graphicspath{ {images/} }

\title{On the Global Convergence of Gradient Descent for Over-parameterized Models using Optimal Transport}

\author{
  L\'ena\"ic Chizat\\
  INRIA, ENS, PSL Research University\\
  Paris, France \\
  \texttt{lenaic.chizat@inria.fr} \\
 \And
  Francis Bach \\
  INRIA, ENS, PSL Research University\\
  Paris, France \\
  \texttt{francis.bach@inria.fr} \\
}

\begin{document}

\maketitle

\begin{abstract}
Many tasks in machine learning and signal processing can be solved by minimizing a convex function of a measure. This includes sparse spikes deconvolution or training a neural network with a single hidden layer. For these problems, we study a simple minimization method: the unknown measure is discretized into a mixture of particles and a continuous-time gradient descent is performed on their weights and positions. This is an idealization of the usual way to train neural networks with a large hidden layer. We show that, when initialized correctly and in the many-particle limit, this gradient flow, although non-convex, converges to global minimizers. The proof involves Wasserstein gradient flows, a by-product of optimal transport theory. Numerical experiments show that this asymptotic behavior is already at play for a reasonable number of particles, even in high dimension.
\end{abstract}

\section{Introduction}\label{sec:introduction}
A classical task in machine learning and signal processing is to search for an element in a Hilbert space $\F$ that minimizes a smooth, convex \emph{loss} function $R:\F\to \R_+$ and that is a linear combination of a few elements from a large given parameterized set $\{\phi(\theta)\}_{\theta \in \Theta}\subset \F$. A general formulation of this problem is to describe the linear combination through an unknown signed measure $\mu$ on the parameter space and to solve for
\begin{equation}\label{eq:introproblem}
J^*=\min_{\mu \in \M(\Theta)} J(\mu), \qquad \text{}\qquad J(\mu) :=R\left( \int \phi \d \mu \right)  + {G(\mu)}
\end{equation}
where $\M(\Theta)$ is the set of signed measures on the parameter space $\Theta$ and $G: \M(\Theta)\to \R$ is an optional convex regularizer, typically the total variation norm when sparse solutions are preferred.  In this paper, we consider the \emph{infinite-dimensional} case where the parameter space $\Theta$ is a domain of $\R^d$ and $\theta \mapsto \phi(\theta)$ is differentiable. This framework covers:
\begin{itemize}
\item 
Training \emph{neural networks with a single hidden layer}, where the goal is to select, within a specific class, a function that maps features in $\R^{d-1}$ to labels in $\R$, from the observation of a joint distribution of features and labels. This corresponds to $\F$ being the space of square-integrable real-valued functions on $\R^{d-1}$, $R$ being, e.g., the quadratic or the logistic loss function, and $\phi(\theta): x\mapsto \sigma (\sum_{i=1}^{d-1} \theta_i x_i + \theta_d)$, with an activation function $\sigma:\R\to\R$. Common choices are the sigmoid function or the rectified linear unit~\cite{haykin1994neural,goodfellow2016deep}, see more details in Section~\ref{subsec:neuralnet}.
\item 
\emph{Sparse spikes deconvolution}, where one attempts to recover a signal which is a mixture of impulses on $\Theta$ given a noisy and filtered observation $y$ (a square-integrable function on $\Theta$). This  corresponds to $\F$ being the space of square-integrable real-valued functions on $\R^{d}$, defining $\phi(\theta): x \mapsto \psi(x-\theta)$ the translations of the filter impulse response $\psi$ and $R(f)= (1/2\lambda)\Vert f-y\Vert^2_{L^2}$, for some $\lambda>0$ that depends on the estimated noise level. Solving~\eqref{eq:introproblem} allows then to reconstruct the mixture of impulses with some guarantees~\cite{de2012exact, duval2015exact}.
\item 
Low-rank tensor decomposition~\cite{haeffele2017global}, recovering mixture models from sketches~\cite{poon2018dual}, see~\cite{boyd2017alternating} for a detailed list of other applications. For example, with symmetric matrices, $\F = \R^{d \times d}$ and $\Phi(\theta) =\theta \theta^\top$,  we recover low-rank matrix decompositions~\cite{srebro}.
\end{itemize}

\subsection{Review of optimization methods and previous work}

While~\eqref{eq:introproblem} is a convex problem, finding approximate minimizers is hard as the variable is infinite-dimensional. Several lines of work provide optimization methods but with strong limitations. 
 \paragraph{Conditional gradient / Frank-Wolfe.} This approach tackles a variant of \eqref{eq:introproblem} where the regularization term is replaced by an upper bound on the total variation norm; the associated constraint set is the convex hull of all Diracs and negatives of Diracs at elements of $\theta \in \Theta$, and thus adapted to conditional gradient algorithms~\cite{jaggi}. At each iteration, one adds a new particle by solving a linear minimization problem over the constraint set (which correspond to finding a particle $\theta \in \Theta$), and then updates the weights. The resulting iterates are sparse and there is a guaranteed sublinear convergence rate of the objective function to its minimum. However, the linear minimization subroutine is hard to perform in general : it is for instance NP-hard for neural networks with homogeneous activations~\cite{bach2017breaking}. One thus generally resorts to space gridding (in low dimension) or to approximate steps, akin to boosting~\cite{wang2015functional}.  The practical behavior is improved with nonconvex updates~\cite{boyd2017alternating,bredies2013inverse} reminiscent of the flow studied below.
\paragraph{Semidefinite hierarchy.}  Another approach is to parameterize the unknown measure by its sequence of moments. The space of such sequences is characterized by a hierarchy of SDP-representable necessary conditions. This approach concerns a large class of \emph{generalized moment problems}~\cite{lasserre2010moments} and can be adapted to deal with special instances of \eqref{eq:introproblem}~\cite{catala2017low}. It is however restricted to $\phi$ which are combinations of few polynomial moments, and its complexity explodes exponentially with the dimension $d$. For $d\geq 2$, convergence to a global minimizer is only guaranteed asymptotically, similarly to the results of the present paper.
\paragraph{Particle gradient descent.} A third approach, which exploits the differentiability of $\phi$, consists in discretizing the unknown measure $\mu$ as a mixture of $m$ particles parameterized by their positions and weights. This corresponds to the finite-dimensional problem
\begin{equation}\label{eq:introdiscrete}
\min_{\substack{\bm{w}\in \R^m \\ \bm{\theta}\in \Theta^m}} J_m (\bm{w},\bm{\theta})
 \qquad \text{where} \qquad
 J_m(\bm{w},\bm{\theta}) := J\left(\frac1m \sum_{i=1}^m w_i \delta_{\theta_i}\right),
\end{equation}
which can then be solved by classical gradient descent-based algorithms.
This method is simple to implement and is widely used for the task of neural network training but, a priori, we may only hope to converge to local minima since $J_m$ is non-convex. \emph{Our goal is to show that this method also benefits from the convex structure of~\eqref{eq:introproblem} and enjoys an asymptotical global optimality guarantee}.

There is a recent literature on global optimality results for~\eqref{eq:introdiscrete} in the specific task of training neural networks. 
It is known that in this context, $J_m$ has less, or no, local minima in an over-parameterization regime and stochastic gradient descent (SGD) finds a global minimizer under restrictive assumptions~\cite{soudry2017exponentially, venturi2018neural, soltanolkotabi2017theoretical,li2017convergence}; see~\cite{soltanolkotabi2017theoretical} for an account of recent results. 
Our approach is not directly comparable to these works: it is more abstract and nonquantitative---we study an ideal dynamics that one can only hope to approximate---but also much more generic. Our objective, in the space of measures, \emph{has} many local minima, but we build gradient flows that avoids them, relying mainly on the homogeneity properties of $J_m$ (see ~\cite{haeffele2017global,journee2010low} for other uses of homogeneity in non-convex optimization). The novelty is to see~\eqref{eq:introdiscrete} as a discretization of~\eqref{eq:introproblem}---a point of view also present in~\cite{nitanda2017stochastic} but not yet exploited for global optimality guarantees.
\subsection{Organization of the paper and summary of contributions}
Our goal is to explain when and why the non-convex particle gradient descent finds global minima. We do so by studying the many-particle limit $m\to \infty$ of the gradient flow of $J_m$. More specifically:
\begin{itemize}
\item 
In Section~\ref{sec:manyparticle}, we introduce a more general class of problems and study the many-particle limit of the associated particle gradient flow. This limit is characterized as a \emph{Wasserstein gradient flow} (Theorem~\ref{th:manyparticle}), an object which is a by-product of optimal transport theory.
\item 
In Section~\ref{sec:convergence}, under assumptions on $\phi$ and the initialization, we prove that if this Wasserstein gradient flow converges, then the limit is a global minimizer of $J$. Under the same conditions, it follows that if $(\bm{w}^{(m)}(t),\bm{\theta}^{(m)}(t))_{t\geq 0}$ are gradient flows for $J_m$ suitably initialized, then
\[
\lim_{m,t\to \infty} J(\mu_{m,t}) = J^* \qquad \text{where} \qquad \mu_{m,t} = \frac1m\sum_{i=1}^m w^{(m)}_i(t) \delta_{\theta^{(m)}_i(t)}.
\]
\item 
Two different settings that leverage the structure of $\phi$ are treated: the \emph{$2$-homogeneous} and the \emph{partially $1$-homogeneous} case. In Section~\ref{sec:case}, we apply these results to sparse deconvolution and training neural networks with a single hidden layer, with sigmoid or ReLU activation function. In each case, our result prescribes conditions on the initialization pattern.
\item 
We perform simple numerical experiments that indicate that this asymptotic regime is already at play for small values of $m$, even for high-dimensional problems. The method behaves incomparably better than simply optimizing on the weights with a very large set of fixed particles.
\end{itemize}
Our focus on qualitative results might be surprising for an optimization paper, but we believe that this is an insightful first step given the hardness and the generality of the problem. We suggest to understand our result as a first \emph{consistency principle} for practical and a commonly used non-convex optimization methods. While we focus on the idealistic setting of a \emph{continuous-time} gradient flow with \emph{exact} gradients, this is expected to reflect the behavior of first order descent algorithms, as they are known to approximate the former: see \cite{scieur2017integration} for (accelerated) gradient descent and~\cite[Thm. 2.1]{kushner2003stochastic} for SGD.
\paragraph{Notation.}
Scalar products and norms are denoted by $\cdot$ and $\vert \cdot \vert$ respectively in $\R^d$, and by $\langle\cdot,\cdot\rangle$ and $\Vert\cdot \Vert$ in the Hilbert space $\F$. Norms of linear operators are also denoted by $\Vert \cdot \Vert$. The differential of a function $f$ at a point $x$ is denoted $df_x$.  We write $\M(\R^d)$ for the set of finite signed Borel measures on~$\R^d$,  $\delta_x$ is a Dirac mass at a point $x$ and $\P_2(\R^d)$ is the set of probability measures endowed with the Wasserstein distance $W_2$ (see Appendix~\ref{app:intro}).
\paragraph{Recent related work.} Several independent works~\cite{mei2018mean, rotskoff2018neural, sirignano2018mean} have studied the many-particle limit of training a neural network with a single large hidden layer and a quadratic loss $R$. Their main focus is on quantifying the convergence of SGD or noisy SGD to the limit trajectory, which is precisely a mean-field limit in this case. Since in our approach this limit is mostly an intermediate step necessary to state our global convergence theorems, it is not studied extensively for itself. These papers thus provide a solid complement to Section~\ref{subsec:manyparticle} (a difference is that we do not assume that $R$ is quadratic nor that $V$ is differentiable). Also,~\cite{mei2018mean} proves a quantitive global convergence result for noisy SGD to an approximate minimizer: we stress that our results are of a different nature, as they rely on homogeneity and not on the mixing effect of noise.
\section{Particle gradient flows and many-particle limit}\label{sec:manyparticle}
\subsection{Main problem and assumptions}\label{subsec:lifting}
From now on, we consider the following class of problems on the space of \emph{non-negative} finite measures on a domain $\Omega \subset \R^d$ which, as explained below, is more general than~\eqref{eq:introproblem}:
\begin{equation}\label{eq:mainproblem}
F^* =  \min_{\mu \in \M_+(\Omega)} F(\mu) \qquad \text{where} \qquad F(\mu) = R\left(\int \Phi \d \mu\right) + \int V \d \mu,
\end{equation}
and we make the following assumptions.
\begin{assumptions}\label{ass:regularity} $\F$ is a separable Hilbert space, $\Omega \subset \R^{d}$ is the closure of a convex open set, and
\begin{enumerate}[(i)]
\item \emph{(smooth loss)} 
$R:\F\to \R_+$ is differentiable, with a differential $dR$ that is Lipschitz on bounded sets and bounded on sublevel sets,
\item \emph{(basic regularity)}
$\Phi:\Omega\to \F$ is (Fr\'echet) differentiable, $V:\Omega \to \R_+$ is semiconvex\footnote{A function $f: \R^d \to \R$ is semiconvex, or $\lambda$-convex, if $f+ \lambda \vert \cdot \vert^2$ is convex, for some $\lambda\in \R$. On a compact domain, any smooth fonction is semiconvex.}, and 
\item \emph{(locally Lipschitz derivatives with sublinear growth)}
there exists a family $(Q_r)_{r>0}$ of nested nonempty closed convex subsets of $\Omega$ such that:
\begin{enumerate}
\item 
$\{u\in \Omega \;;\; \mathrm{dist}(u,Q_r)\leq r'\} \subset Q_{r+r'}$ for all $r,r'>0$,
\item
$\Phi$ and $V$ are bounded and $d\Phi$ is Lipschitz on each $Q_r$, and
\item 
there exists $C_1,C_2>0$ such that $\sup_{u \in Q_r}( \Vert d\Phi_u\Vert + \Vert \partial V(u)\Vert) \leq C_1 + C_2r$ for all $r>0$, where $\Vert \partial V(u)\Vert$ stands for the maximal norm of an element in $\partial V(u)$.
\end{enumerate}\label{subass:Qr}
\end{enumerate}
\end{assumptions}
Assumption \ref{ass:regularity}-\eqref{subass:Qr} reduces to classical local Lipschitzness and growth assumptions on $d\Phi$ and $\partial V$ if the nested sets $(Q_r)_r$ are the balls of radius $r$, but unbounded sets $Q_r$ are also allowed. These sets are a technical tool used later to confine the gradient flows in areas where gradients are well-controlled. By convention, we set $F(\mu)=\infty$ if $\mu$ is not concentrated on $\Omega$. Also, the integral $\int \Phi \d \mu$ is a \emph{Bochner integral}~\cite[App. E6]{cohn1980measure}. It yields a well-defined value in $\F$ whenever $\Phi$ is measurable and $\int \Vert \phi\Vert \d \vert \mu\vert <\infty$. Otherwise, we also set $F(\mu)=\infty$ by convention.

\paragraph{Recovering~\eqref{eq:introproblem} through lifting.}
It is shown in Appendix~\ref{app:lifting} that, for a class of admissible regularizers $G$ containing the total variation norm, problem~\eqref{eq:introproblem} admits an equivalent formulation as~\eqref{eq:mainproblem}. Indeed, consider the \emph{lifted} domain $\Omega = \R\times \Theta$, the function $\Phi(w,\theta)=w\phi(\theta)$ and $V(w,\theta)=\vert w\vert$. Then $J^*$ equals $F^*$ and given a minimizer of one of the problems, one can easily build minimizers for the other. This equivalent \emph{lifted} formulation removes the asymmetry between weight and position---weight becomes just another coordinate of a particle's position. This is the right point of view for our purpose and this is why $F$ is our central object of study in the following.

\paragraph{Homogeneity.} The functions $\Phi$ and $V$ obtained through the lifting share the property of being positively $1$-homogeneous in the variable $w$. A function $f$ between vector spaces is said positively $p$-homogeneous when for all $\lambda>0$ and argument $x$, it holds $f(\lambda x)=\lambda^pf(x)$. This property is central for our global convergence results (but is not needed throughout Section~\ref{sec:manyparticle}).

\subsection{Particle gradient flow}
We first consider an initial measure which is a mixture of particles---an atomic measure--- and define the initial object in our construction: the \emph{particle gradient flow}.
For a number $m\in \N$ of particles, and a vector $\mathbf u\in \Omega^m$ of positions, this is the gradient flow of
\begin{equation}\label{eq:finiteparticle}
F_m(\mathbf{u}) \coloneqq F\left(\frac1m \sum_{i=1}^m \delta_{\mathbf u_i}\right) = R \left( \frac{1}{m} \sum_{i=1}^m \Phi(\mathbf u_i) \right) + \frac1m \sum_{i=1}^m V(\mathbf  u_i),
\end{equation}
or, more precisely, its \emph{subgradient flow} because $V$ can be non-smooth. We recall that a \emph{subgradient} of a (possibly non-convex) function $f: \R^d \to \bar \R$ at a point $u_0\in \R^d$ is a $p\in \R^d$ satisfying $f(u)\geq f(u_0) + p\cdot (u-u_0) + o(u-u_0)$ for all $u \in \R^d$. The set of subgradients at $u$ is a closed convex set called the \emph{subdifferential} of $f$ at $u$ denoted $\partial f(u)$~\cite{rockafellar97}.

\begin{definition}[Particle gradient flow]\label{def:classicalGF}
A gradient flow for the functional $F_m$ is an absolutely continuous\footnote{An absolutely continuous function $x:\R\to \R^d$ is almost everywhere differentiable and satisfies $x(t)-x(s) = \int_s^t x'(r)dr$ for all $s<t$.} path $\mathbf u : \R_+\to \Omega^m$ which satisfies $\mathbf u'(t) \in - m \,\partial F_m(\mathbf u(t))$ for almost every $t\geq0$.
\end{definition}

This definition uses a subgradient scaled by $m$, which is the subgradient relative to the scalar product on $(\R^d)^m$ scaled by $1/m$: this normalization amounts to assigning a mass $1/m$ to each particle and is convenient for taking the many-particle limit $m\to \infty$. 
We now state basic properties of this object.
\begin{proposition}\label{eq:classicalGF}
For any initialization $\mathbf u(0)\in \Omega^m$, there exists a unique gradient flow $\mathbf u : \R_+\to \Omega^m$ for $F_m$. Moreover, for almost every $t>0$, it holds $\frac{d}{ds} F_m(\mathbf{u}(s))\vert_{s=t}=-\vert \mathbf{u}'(t)\vert^2$ and the velocity of the $i$-th particle is given by $\mathbf{u}_i'(t) = v_t(\mathbf{u}_i(t))$, where for $u\in \Omega$ and $\mu_{m,t}:=(1/m)\sum_{i=1}^m \delta_{\mathbf{u}_i(t)}$,
\begin{equation}\label{eq:velocity}
v_t(u) = \tilde v_t(u) -\proj_{\partial V(u)}(\tilde v_t(u)) \quad \text{with}\quad \tilde v_t(u) = - \left[\left\langle R'\left(\tint \Phi \d \mu_{m,t}\right), \partial_j \Phi(u) \right\rangle \right]_{j=1}^d.
\end{equation}
\end{proposition}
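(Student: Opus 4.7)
The plan is to first compute $\partial F_m$ by a sum rule, then invoke the standard theory of subgradient flows of $\lambda$-convex functions for existence, uniqueness, and the chain rule, and finally read off the velocity formula as the minimal-norm selection in the subdifferential.

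Write $F_m = S_m + V_m$ where $S_m(\mathbf{u}) = R\bigl(\tfrac{1}{m}\sum_i \Phi(\mathbf u_i)\bigr)$ is Fr\'echet differentiable, with partial $\partial_{\mathbf u_i} S_m(\mathbf u) = \tfrac{1}{m} d\Phi_{\mathbf u_i}^*\, R'\bigl(\tfrac{1}{m}\sum_j \Phi(\mathbf u_j)\bigr) = -\tfrac{1}{m}\tilde v_t(\mathbf u_i)$ by the chain rule, and $V_m(\mathbf u)=\tfrac{1}{m}\sum_i V(\mathbf u_i)$ is semiconvex and separable across particles, with $(\partial V_m(\mathbf u))_i = \tfrac{1}{m}\partial V(\mathbf u_i)$. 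Since $S_m$ is $C^1$, the standard sum rule gives $\partial F_m(\mathbf u) = \nabla S_m(\mathbf u) + \partial V_m(\mathbf u)$; multiplying by $m$ as in Definition \ref{def:classicalGF} cancels the $1/m$ prefactors, so the $i$-th component of $-m\,\partial F_m(\mathbf u)$ is precisely the set $\tilde v_t(\mathbf u_i) - \partial V(\mathbf u_i)$, confirming that the flow decouples into per-particle dynamics driven by the common measure $\mu_{m,t}$.

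For existence and uniqueness, I would appeal to the classical theory of subgradient flows of $\lambda$-convex functionals (e.g.\ Brezis, or Ambrosio--Gigli--Savar\'e) after establishing suitable a priori bounds. Assumption \ref{ass:regularity}-(iii) gives that on $Q_r^m$ every element of $m\,\partial F_m$ has norm at most $C_1 + C_2 r$ uniformly; a Gronwall argument, together with the nesting property $\{u : \dist(u,Q_r)\le r'\} \subset Q_{r+r'}$, then confines $\mathbf u(t)$ to $Q_{r(t)}^m$ for some $r(t)$ growing at most exponentially in $t$. On each $Q_r^m$, $d\Phi$ is Lipschitz and $\int \Phi\,\d \mu_{m,t}$ remains bounded in $\F$ so $R'$ is Lipschitz on its range, hence $\nabla S_m$ is Lipschitz on $Q_r^m$ and $F_m$ is $\lambda(r)$-semiconvex there. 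The standard theory then produces a unique absolutely continuous $\mathbf u:\R_+\to\Omega^m$ such that, for a.e.\ $t$, $\mathbf u'(t)$ is the element of $-m\,\partial F_m(\mathbf u(t))$ of minimal norm.

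The identity $\tfrac{d}{ds}F_m(\mathbf u(s))\vert_{s=t}=-|\mathbf u'(t)|^2$ is then the usual chain rule for semiconvex functions along absolutely continuous curves, stated in the rescaled Hilbert space $((\R^d)^m, \langle\cdot,\cdot\rangle/m)$ in which the subdifferential of $F_m$ is $m\,\partial F_m$ and $\mathbf u'(t)$ is the negative minimal-norm selection. Finally, the velocity formula is pure Hilbert-space geometry: the minimum norm element of the convex set $\tilde v_t(u) - \partial V(u)$ is obtained by minimizing $|\tilde v_t(u) - q|^2$ over $q\in \partial V(u)$, achieved at $q^{\star} = \proj_{\partial V(u)} \tilde v_t(u)$, which yields $v_t(u) = \tilde v_t(u) - \proj_{\partial V(u)} \tilde v_t(u)$ as stated. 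The main technical obstacle is \emph{globalization}: $F_m$ is only locally semiconvex and its sublevel sets need not be bounded, so one cannot invoke a global theorem directly; the nested-set structure $(Q_r)_r$ with its sublinear growth bound is tailored precisely to furnish the Gronwall confinement that prevents finite-time blow-up and lets the local theory be patched into a global solution on $\R_+$.
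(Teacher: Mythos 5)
Your proposal is correct, and its local part coincides with the paper's: decompose $F_m$ into the smooth loss term plus the separable semiconvex term $\frac1m\sum_i V(\mathbf u_i)$, invoke the standard theory of gradient flows of locally semiconvex functions for existence, uniqueness and the energy identity on a maximal interval $[0,T[$ (the paper simply cites Santambrogio's survey), and identify the velocity as the minimal-norm element of $\tilde v_t(\mathbf u_i)-\partial V(\mathbf u_i)$, which is exactly the projection formula $v_t(u)=\tilde v_t(u)-\proj_{\partial V(u)}(\tilde v_t(u))$. Where you genuinely differ is the globalization $T=\infty$. You confine the flow in $Q_r^m$ via a Gr\"onwall argument based on Assumption~\ref{ass:regularity}-(iii); the paper instead argues more economically that, since $F_m\geq 0$, the energy identity combined with Jensen's inequality bounds the length $\int_0^t\vert\mathbf u'(s)\vert\d s$ on bounded time intervals, so $\mathbf u(t)$ has a limit as $t\to T$ whenever $T<\infty$, contradicting maximality. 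Your Gr\"onwall route is precisely the mechanism the paper uses later for the measure-valued flow (Proposition~\ref{prop:uniquenessWGF} and Step~(iv) of the proof of Theorem~\ref{th:manyparticle}), so it is certainly viable here as well, but note one imprecision: Assumption~\ref{ass:regularity}-(iii)-(c) bounds $\Vert d\Phi_u\Vert+\Vert\partial V(u)\Vert$ by $C_1+C_2r$ on $Q_r$, not the elements of $m\,\partial F_m$ themselves; to get the linear-in-$r$ velocity bound you must also control $\Vert R'(\tint\Phi\d\mu_{m,t})\Vert$, which follows because $F_m$ decreases along the flow so $\tint\Phi\d\mu_{m,t}$ stays in a sublevel set of $R$ where $dR$ is bounded by Assumption~\ref{ass:regularity}-(i) (this is the same observation made in Step~(iv) of the paper's many-particle proof). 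With that sentence added your argument is complete; the paper's bounded-length argument buys simplicity, needing only lower-boundedness of $F_m$ and no quantitative growth control.
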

The expression of the velocity involves a projection because gradient flows select subgradients of minimal norm~\cite{santambrogio2015optimal}. We have denoted by $R'(f)\in \F$ the gradient of $R$ at $f\in \F$ and by $\partial_j \Phi(u)\in \F$ the differential $d\Phi_u$ applied to the $j$-th vector of the canonical basis of $\R^d$.  Note that $[\tilde v_t(\mathbf{u}_i)]_{i=1}^m$ is (minus) the gradient of the first term in~\eqref{eq:finiteparticle} : when $V$ is differentiable, we have $v_t(u)=\tilde v_t(u) -\nabla V(u)$ and we recover the classical gradient of~\eqref{eq:finiteparticle}. When $V$ is non-smooth, this gradient flow can be understood as a continuous-time version of the forward-backward minimization algorithm~\cite{combettes2011proximal}.

\subsection{Wasserstein gradient flow}

The fact that the velocity of each particle can be expressed as the evaluation of a velocity field (Eq.~\eqref{eq:velocity}) makes it easy, at least formally, to generalize the particle gradient flow to arbitrary measure-valued initializations---not just atomic ones.  On the one hand, the evolution of a time-dependent measure $(\mu_t)_t$ under the action of instantaneous velocity fields $(v_t)_{t\geq 0}$ can be formalized by a conservation of mass equation, known as the \emph{continuity equation}, that reads $\partial_t \mu_t = - \mathrm{div} (v_t\mu_t)$ where $\mathrm{div}$ is the divergence operator\footnote{For a smooth vector field $E=(E_i)_{i=1}^d:\R^d \to \R^d$, its divergence is  given by $\mathrm{div}(E) = \sum_{i=1}^d \partial E_i/\partial x_i$.} (see Appendix~\ref{app:WGF}).
On the other hand, there is a direct link between the velocity field~\eqref{eq:velocity} and the functional $F$. The differential of $F$ evaluated at $\mu\in \M(\Omega)$ is represented by the function $F'(\mu): \Omega \to \R$ defined as
\[
F'(\mu)(u) := \left\langle R'\left(\int \Phi\d\mu\right), \Phi(u)\right\rangle + V(u).
\]
Thus $v_t$ is simply a field of (minus) subgradients of $F'(\mu_{m,t})$---it is in fact the field of minimal norm subgradients. We write this relation $v_t \in - \partial F'(\mu_{m,t})$. The set $\partial F'$ is called the \emph{Wasserstein subdifferential} of $F$, as it can be interpreted as the subdifferential of $F$ relatively to the Wasserstein metric on $\P_2(\Omega)$ (see Appendix~\ref{app:WGFproperties}).
We thus expect that for initializations with arbitrary probability distributions, the generalization of the gradient flow coindices with the following object.

\begin{definition}[Wasserstein gradient flow]\label{def:WassersteinGF}
A Wasserstein gradient flow for the functional $F$ on a time interval ${[0,T[}$ is an absolutely continuous path $(\mu_t)_{t\in {[0,T[}}$ in $\P_2(\Omega)$ that satisfies, distributionally on ${[0,T[}\times \Omega^d$,
\begin{equation}\label{eq:gradientflow}
\partial_t \mu_t = - \mathrm{div} (v_t\mu_t) \qwhereq v_t \in - \partial F'(\mu_t) .
\end{equation}
\end{definition}

This is a proper generalization of Definition~\ref{def:classicalGF} since, whenever $(\mathbf u(t))_{t\geq 0}$ is a particle gradient flow for $F_m$, then $t\mapsto \mu_{m,t} := \frac1m \sum_{i=1}^m \delta_{\mathbf u_i{(t)}}$ is a Wasserstein gradient flow for $F$ in the sense of Definition~\ref{def:WassersteinGF} (see Proposition~\ref{prop:atomicWGF}). 
By leveraging the abstract theory of gradient flows developed in~\cite{ambrosio2008gradient}, we show in Appendix~\ref{app:WGFproperties} that these Wasserstein gradient flows are well-defined.

\begin{proposition}[Existence and uniqueness]\label{prop:uniquenessWGF}
Under Assumptions~\ref{ass:regularity}, if $\mu_0\in \P_2(\Omega)$ is concentrated on a set $Q_{r_0}\subset \Omega$, then there exists a unique Wasserstein gradient flow $(\mu_t)_{t\geq 0}$ for $F$ starting from~$\mu_0$. It satisfies the continuity equation with the velocity field defined in~\eqref{eq:velocity} (with $\mu_t$ in place of $\mu_{m,t}$).
\end{proposition}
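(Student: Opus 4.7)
The plan is to apply the abstract theory of gradient flows in metric spaces from \cite{ambrosio2008gradient}, specialized to $\P_2(\Omega)$ endowed with the Wasserstein distance $W_2$. I would first decompose $F = F_\Phi + F_V$ with $F_\Phi(\mu) := R(\int \Phi \,\d\mu)$ and $F_V(\mu) := \int V \,\d\mu$, and compute the Wasserstein subdifferential of each by a chain-rule argument. For $F_\Phi$, the composition rule gives a Fr\'echet-type gradient represented by $u \mapsto \langle R'(\int \Phi\,\d\mu),\, d\Phi_u \rangle$; for the potential $F_V$, the subdifferential is represented pointwise by $\partial V$. The Wasserstein subdifferential of minimal norm then has the form~\eqref{eq:velocity}, explaining why the solution is characterised by that specific velocity field.

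A preliminary step is necessary because the sets $Q_r$ in Assumption~\ref{ass:regularity}-\eqref{subass:Qr} may be unbounded, so derivatives of $\Phi$ and $V$ need not be globally bounded. I would introduce $r(t) := \inf\{\, r > 0 \;:\; \mu_t \text{ is concentrated on } Q_r\,\}$ and argue, using the sublinear growth bound of Assumption~\ref{ass:regularity}-\eqref{subass:Qr}(c) together with the nestedness property (a), that $r$ satisfies a Gr\"onwall-type inequality of the form $r'(t) \leq C_1 + C_2 \, r(t)$. Hence $r(t)$ remains finite for all time and grows at most exponentially. On each finite time horizon $[0,T]$ one may therefore replace $\Phi$ and $V$ by suitably truncated versions that are bounded with Lipschitz derivatives, without altering the gradient flow.

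Once confined, the hypotheses needed to invoke \cite{ambrosio2008gradient} are satisfied: the $\lambda$-semiconvexity of $V$ renders $F_V$ $\lambda$-convex along generalised geodesics, while the $C^{1,1}$ regularity of $\Phi$ on $Q_{r(T)}$ combined with the Lipschitz-on-bounded-sets property of $dR$ allows $F_\Phi$ to be absorbed as a smooth $\lambda'$-convex perturbation. Existence of a curve of maximal slope then follows from the general existence theorem (e.g.\ Theorem 11.2.1 of \cite{ambrosio2008gradient}), or equivalently from convergence of the JKO minimising-movement scheme. Any such curve satisfies the continuity equation with velocity field equal to the minimal-norm element of $-\partial F'(\mu_t)$, which by the computation of the first paragraph coincides with~\eqref{eq:velocity}. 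Uniqueness is obtained from the fact that, on the confined set, the velocity field $v_t(\cdot)$ is Lipschitz uniformly in $t$, yielding a standard Gr\"onwall contraction estimate on $W_2(\mu_t^{(1)}, \mu_t^{(2)})$ between two solutions with identical initial data.

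The main obstacle I anticipate is rigorously justifying the chain rule for the Wasserstein subdifferential of $F_\Phi$, since this term is in general neither convex nor concave in $\mu$; the argument must show that the specific structure of a smooth outer loss composed with a linear measurement suffices to characterise the subdifferential explicitly as above, within the $\lambda$-convex-plus-smooth-perturbation framework of AGS. A secondary technical point is reconciling the confinement step with the AGS theory in a self-consistent manner, namely verifying that the truncated functional's gradient flow coincides with that of $F$ on each finite time horizon.
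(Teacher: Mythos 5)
Your blueprint is essentially the paper's: decompose the functional into the loss term and the potential term, establish semiconvexity along generalized geodesics on confined sets (Lipschitzness of $dR$ and of $d\Phi$ on $Q_r$ for the loss, $\lambda$-semiconvexity of $V$ for the potential), invoke~\cite[Thm. 11.2.1]{ambrosio2008gradient}, and remove the confinement by a Gr\"onwall argument based on Assumption~\ref{ass:regularity}-(iii)-(c). However, your uniqueness step as stated has a genuine gap. You claim that on the confined set the velocity field $v_t$ is Lipschitz uniformly in $t$ and run a Gr\"onwall contraction on $W_2$ between two solutions. When $V$ is merely semiconvex and non-smooth---which is the motivating case $V(w,\theta)=\vert w\vert$---the minimal-norm selection $v_t(u)=\tilde v_t(u)-\proj_{\partial V(u)}(\tilde v_t(u))$ is in general discontinuous (the paper later resorts to topological degree theory precisely because these fields are discontinuous), so no Lipschitz bound is available; at best one has a one-sided Lipschitz property. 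Moreover, Definition~\ref{def:WassersteinGF} only requires $v_t\in-\partial F'(\mu_t)$, so two competing solutions need not be driven by the same (nor by the minimal-norm) selection, and a contraction estimate on characteristics does not directly compare them. The correct repair---and the paper's route---is to deduce uniqueness from the $(\lambda_V-\lambda_r)$-convexity along generalized geodesics of the localized functional through the very same Theorem~11.2.1 you invoke for existence (an EVI-type contraction), not from regularity of the velocity field.

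A second, more minor deviation concerns the localization device. The paper does not truncate $\Phi$ and $V$: it constrains the measures, defining $F^{(r)}=F$ on $\{\mu(Q_r)=1\}$ and $+\infty$ otherwise, proves the subdifferential characterization $v(u)\in\partial\bigl(F'(\mu)+\iota_{Q_r}\bigr)(u)$ (with normal-cone terms on $\partial Q_r$), and then shows via exit times $t_r$ and Gr\"onwall's lemma that the constraint is inactive before $t_r$ and that $t_r\to\infty$, so the velocity reduces to~\eqref{eq:velocity}. Your plan to ``replace $\Phi$ and $V$ by suitably truncated versions'' is not automatic: $Q_r$ may be unbounded, and natural truncations (e.g.\ precomposition with $\proj_{Q_r}$) generally destroy the semiconvexity of $V$, so the claim that the truncated flow coincides with that of $F$ requires an argument you have not supplied; the constrained-functional formulation is designed exactly to bypass this. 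Relatedly, the a priori bound on your $r(t)$ should be formulated on the localized flows (as exit-time estimates), since the global flow whose support you are bounding is the object whose existence is still to be proven.
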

Note that the condition on the initialization is automatically satisfied in Proposition~\ref{eq:classicalGF} because there the initial measure has a finite discrete support: it is thus contained in any $Q_r$ for $r>0$ large enough. 

\subsection{Many-particle limit}\label{subsec:manyparticle}
We now characterize the many-particle limit of classical gradient flows, under Assumptions~\ref{ass:regularity}.

\begin{theorem}[Many-particle limit]\label{th:manyparticle}
Consider $(t\mapsto \mathbf u_{m}(t))_{m\in \mathbb{N}}$ a sequence of classical gradient flows for $F_m$ initialized in a set $Q_{r_0}\subset \Omega$. If $\mu_{m,0}$ converges to some $\mu_0\in\P_2(\Omega)$ for the Wasserstein distance $W_2$, then $(\mu_{m,t})_t$ converges, as $m\to \infty$, to the unique Wasserstein gradient flow of $F$ starting from $\mu_0$.
\end{theorem}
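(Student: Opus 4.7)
The plan is to treat each particle gradient flow as already being a Wasserstein gradient flow (by the remark following Definition~\ref{def:WassersteinGF} and Proposition~\ref{prop:atomicWGF}), so that Theorem~\ref{th:manyparticle} becomes a \emph{stability} statement for Wasserstein gradient flows of $F$: if initial data converge in $W_2$, then the trajectories converge in $W_2$ uniformly on compact time intervals. This way one avoids having to re-do the PDE analysis for atomic measures and simply invokes the uniqueness result of Proposition~\ref{prop:uniquenessWGF} at the end.

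First I would establish a priori confinement of the flows. Using Assumption~\ref{ass:regularity}-\eqref{subass:Qr}, the sublinear bound $\sup_{u\in Q_r}(\Vert d\Phi_u\Vert + \Vert \partial V(u)\Vert)\leq C_1+C_2 r$ together with boundedness of $R'$ on sublevel sets (which are bounded because $R$ is decreasing along the flow) gives a Gronwall estimate showing that if the initial particles lie in $Q_{r_0}$, then for $t\in [0,T]$ they all remain in some $Q_{r(T)}$ independent of $m$. This yields a uniform bound on $\Vert v_t\Vert_{L^\infty(\mu_{m,t})}$, hence on the speed of each particle, and via the energy identity of Proposition~\ref{eq:classicalGF} a uniform $L^2$ bound on $\Vert v_t\Vert_{L^2(\mu_{m,t})}$. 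Since $W_2(\mu_{m,s},\mu_{m,t})^2 \leq |t-s|\int_s^t \Vert v_r\Vert_{L^2(\mu_{m,r})}^2\, dr$, this gives equicontinuity of $t\mapsto \mu_{m,t}$ in $W_2$. Combined with tightness (all $\mu_{m,t}$ are concentrated on the compact set $Q_{r(T)}\cap \bar B(0,r(T))$), Arzelà–Ascoli extracts a subsequence with $\mu_{m,t}\to \tilde\mu_t$ in $W_2$ uniformly on $[0,T]$.

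The core step is to identify $\tilde\mu_t$ as a Wasserstein gradient flow for $F$ starting from $\mu_0$, by passing to the limit in the distributional continuity equation $\partial_t\mu_{m,t}+\mathrm{div}(v_{m,t}\mu_{m,t})=0$. The smooth part $\tilde v_t(u)=-\langle R'(\int \Phi\,d\mu_t), \partial_j\Phi(u)\rangle_{j=1}^d$ is easy to handle: $W_2$-convergence of measures supported in a common compact set and continuity of $\Phi$ give convergence of $\int \Phi\, d\mu_{m,t}$ in $\F$, and then the Lipschitzness of $R'$ on bounded sets together with local Lipschitzness of $d\Phi$ on each $Q_r$ passes to the limit. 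The main obstacle, as expected, is the non-smooth part: one must show that the minimal-norm selection $\tilde v_t(u)-\proj_{\partial V(u)}(\tilde v_t(u))$ is stable under $W_2$-convergence. Here I would invoke the abstract stability machinery of~\cite{ambrosio2008gradient}: since $V$ is semiconvex, $\partial V$ has a closed graph and the resulting Wasserstein subdifferential of $\mu\mapsto \int V\,d\mu$ enjoys the strong-weak closedness property needed to pass to the limit in $v_{m,t}\mu_{m,t}\to v_t\tilde\mu_t$ in the sense of distributions.

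Once $\tilde\mu$ is identified as a Wasserstein gradient flow for $F$ with initial datum $\mu_0$, Proposition~\ref{prop:uniquenessWGF} shows it is the unique such flow, so the limit is independent of the extracted subsequence and the convergence upgrades to the full sequence by a standard subsequence-of-subsequences argument, yielding the theorem on any finite interval $[0,T]$ and hence on $\R_+$. The hardest part of the program is the distributional passage to the limit in the non-smooth velocity field, which in practice means verifying the hypotheses under which the general theory of~\cite{ambrosio2008gradient} guarantees that minimal-selection subgradients are stable along $W_2$-converging sequences with uniformly bounded speeds; the rest is essentially a Gronwall/compactness exercise tailored to the nested family $(Q_r)_r$.
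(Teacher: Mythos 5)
Your overall architecture is the same as the paper's (confinement in a set $Q_{r}$ via a Gr\"onwall/speed bound, $W_2$-equicontinuity from the energy identity, compactness, passage to the limit in the continuity equation, then conclusion by the uniqueness of Proposition~\ref{prop:uniquenessWGF}), but your compactness step is justified incorrectly. You claim that all $\mu_{m,t}$ are concentrated on the \emph{compact} set $Q_{r(T)}\cap \bar B(0,r(T))$; however the nested sets $Q_r$ are allowed to be unbounded (e.g.\ $Q_r=[-r,r]\times\Theta$ with $\Theta=\R^{d-1}$, which is exactly the partially $1$-homogeneous case), and the hypothesis only gives $W_2$-convergence of $\mu_{m,0}$, not a uniform bound on the supports, so no such compact containment is available. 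In particular Arzel\`a--Ascoli does not give you a subsequence converging \emph{in $W_2$} uniformly on $[0,T]$: $W_2$-balls are only weakly precompact, which is precisely why the paper extracts a limit curve in the narrow/Bounded-Lipschitz topology and runs the whole identification step (convergence of $\int\Phi\,\d\mu_{m,t}$, weak convergence of the momenta $v_{m,t}\mu_{m,t}\,\d t$) with the $\Vert\cdot\Vert_\BL$ norm and the uniform bounds $\Vert\Phi\Vert_{\infty,r},\Vert d\Phi\Vert_{\infty,r}$ on $Q_r$. Your claimed uniform $W_2$ convergence could be repaired (tightness and uniform integrability of second moments do propagate from $t=0$ thanks to the uniform speed bound), but not by the argument you wrote.

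On the non-smooth term, you identify "stability of the minimal-norm selection under $W_2$-convergence" as the main obstacle and defer it to the abstract closedness/stability machinery of~\cite{ambrosio2008gradient}. That route is legitimate---the paper itself notes that the stability theorem of~\cite{ambrosio2008gradient} could be invoked, applied to the localized functionals $F^{(r)}$ which are semiconvex along generalized geodesics (and then you must also handle the constraint term $\iota_{Q_r}$ appearing in the localized subdifferential and check it is inactive thanks to the confinement estimate)---but as written it is an appeal rather than a proof, and it is heavier than needed. The paper's key observation is that the obstacle dissolves: $\partial V(u)$ at a fixed $u$ does not depend on $\mu$, and the minimal selection is $v_t(u)=(\id-\proj_{\partial V(u)})(\tilde v_t(u))$ with $\proj_{\partial V(u)}$ $1$-Lipschitz, so $\vert v_{m,t}(u)-v_t(u)\vert\le 2\vert\tilde v_{m,t}(u)-\tilde v_t(u)\vert$; since $\tilde v_{m,t}$ depends on $\mu_{m,t}$ only through $\int\Phi\,\d\mu_{m,t}$, the velocity fields converge uniformly on $[0,t_r]\times Q_r$ and the momenta pass to the limit by an elementary estimate, with no subdifferential-closedness argument at all. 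So: right skeleton, but one genuinely flawed compactness claim in the unbounded-$Q_r$ case, and the central limit-passage is only sketched where the paper gives a short direct argument.
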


Given a measure $\mu_0\in \P_2(Q_{r_0})$, an example for the sequence $\mathbf{u}_{m}(0)$ is $\mathbf {u}_{m}(0) = (u_1,\dots, u_m)$ where $u_1,u_2,\dots, u_m$ are independent samples distributed according to $\mu_0$. By the law of large numbers for empirical distributions, the sequence of empirical distributions $\mu_{m,0}=\frac1m\sum_{i=1}^m \delta_{u_i}$ converges (almost surely, for $W_2$) to $\mu_0$. In particular, our proof of Theorem~\ref{th:manyparticle} gives an alternative proof of the existence claim in Proposition~\ref{prop:uniquenessWGF} (the latter remains necessary for the uniqueness of the limit).

\section{Convergence to global minimizers}\label{sec:convergence}
\subsection{General idea}
As can be seen from Definition \ref{def:WassersteinGF}, a probability measure $\mu \in \P_2(\Omega)$ is a stationary point of a Wasserstein gradient flow if and only if $ 0 \in \partial F'(\mu)(u)  \mbox{ for } \mu\mbox{-a.e. } u\in \Omega$. It is proved in~\cite{nitanda2017stochastic} that these stationary points are, in some cases, optimal over probabilities that have a smaller support. However, they are not in general global minimizers of $F$ over $\M_+(\Omega)$, even when $R$ is convex. Such global minimizers are indeed characterized as follows.
\begin{proposition}[Minimizers]\label{prop:stationnary}
Assume that $R$ is convex. A measure $\mu\in \M_+(\Omega)$ such that $F(\mu)<\infty$  minimizes $F$ on $\M_+(\Omega)$ iff $F'(\mu)\geq 0$ and $F'(\mu)(u)=0$ for $\mu$-a.e.\ $u\in \Omega$.
\end{proposition}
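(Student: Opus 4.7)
My plan is to exploit the convexity of $F$ on $\M_+(\Omega)$ and turn the optimality condition into a first-order variational inequality, from which the two stated conditions can be extracted by judicious choices of test measures.

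First I would verify that $F$ is convex on $\M_+(\Omega)$: the map $\mu\mapsto \int\Phi\,d\mu$ is linear, $R$ is convex by assumption, and $\mu\mapsto \int V\,d\mu$ is linear, so the sum is convex. Next, for $\mu,\nu\in \M_+(\Omega)$ with $F(\mu),F(\nu)<\infty$, consider the convex combination $\mu_s=(1-s)\mu+s\nu$ for $s\in[0,1]$. Using convexity of $R$ together with its differentiability, a direct computation gives
\begin{equation*}
\lim_{s\to 0^+}\frac{F(\mu_s)-F(\mu)}{s}=\left\langle R'\!\left(\tint\Phi\,d\mu\right),\int \Phi\,d(\nu-\mu)\right\rangle+\int V\,d(\nu-\mu)=\int F'(\mu)\,d(\nu-\mu),
\end{equation*}
where the exchange of limit and integral is justified by the Lipschitzness of $dR$ on bounded sets (Assumption~\ref{ass:regularity}(i)) and the fact that both $\int\Phi\,d\mu_s$ and $\int\Phi\,d\nu$ lie in a bounded subset of $\F$ along the segment. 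Convexity then upgrades this one-sided derivative to the global inequality
\begin{equation*}
F(\nu)\geq F(\mu)+\int F'(\mu)\,d(\nu-\mu)\qquad\text{for all }\nu\in\M_+(\Omega).
\end{equation*}

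For the ``only if'' direction, suppose $\mu$ is a minimizer. Testing the variational inequality $\int F'(\mu)\,d(\nu-\mu)\geq 0$ with $\nu=2\mu$ gives $\int F'(\mu)\,d\mu\geq 0$, while testing with $\nu=0\in\M_+(\Omega)$ (noting $F(0)=R(0)<\infty$) gives $\int F'(\mu)\,d\mu\leq 0$; hence $\int F'(\mu)\,d\mu=0$. Next, for arbitrary $u_0\in\Omega$ and $\lambda>0$, test with $\nu=\mu+\lambda\delta_{u_0}$, which is admissible since $\Phi$ and $V$ take finite values at $u_0$: this yields $\lambda F'(\mu)(u_0)\geq 0$, so $F'(\mu)\geq 0$ pointwise on $\Omega$. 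Combined with $\int F'(\mu)\,d\mu=0$ and the sign, this forces $F'(\mu)(u)=0$ for $\mu$-a.e.\ $u$.

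For the ``if'' direction, assume $F'(\mu)\geq 0$ everywhere and $F'(\mu)=0$ $\mu$-a.e. Then for any $\nu\in\M_+(\Omega)$ (the case $F(\nu)=\infty$ is trivial), the global convexity inequality yields
\begin{equation*}
F(\nu)-F(\mu)\geq\int F'(\mu)\,d\nu-\int F'(\mu)\,d\mu=\int F'(\mu)\,d\nu\geq 0,
\end{equation*}
so $\mu$ is a global minimizer. The main subtlety I expect is justifying the differentiation under the integral in the first step rigorously given that $F$ can take the value $+\infty$ and $V$ is only assumed semiconvex; this is handled by restricting attention to $\nu$ with $F(\nu)<\infty$ for the forward implication (for which all chosen test measures $2\mu$, $0$, $\mu+\lambda\delta_{u_0}$ qualify) and by observing that the convexity inequality holds trivially when $F(\nu)=+\infty$ in the backward implication.
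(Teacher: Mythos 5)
Your proof is correct and follows essentially the same route as the paper: both compute the first variation $\frac{d}{ds}F(\mu+s(\nu-\mu))\big|_{s=0^+}=\int F'(\mu)\,\d(\nu-\mu)$ and combine it with convexity along segments, the only difference being that you extract necessity via the explicit test measures $2\mu$, $0$, and $\mu+\lambda\delta_{u_0}$ where the paper argues it directly (via the Lebesgue decomposition of $\nu-\mu$). No gaps; your handling of admissibility of the test measures and of the case $F(\nu)=\infty$ is sound.
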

Despite these strong differences between stationarity and global optimality, we show in this section that Wasserstein gradient flows converge to global minimizers, under two main conditions:
\begin{itemize}
\item \emph{On the structure}:  $\Phi$ and $V$ must share a homogeneity direction (see Section~\ref{subsec:lifting} for the definition of homogeneity), and 
\item \emph{On the initialization}: the support of the initialization of the Wasserstein gradient flow satisfies a ``separation'' property. This property is preserved throughout the dynamic and, combined with homogeneity, allows to escape from neighborhoods of non-optimal points.
\end{itemize}
We turn these general ideas into concrete statements for two cases of interest, that exhibit different structures and behaviors: (i) when $\Phi$ and $V$ are positively $2$-homogeneous and (ii) when $\Phi$ and $V$ are positively $1$-homogeneous with respect to one variable.

\subsection{The $2$-homogeneous case}
In the $2$-homogeneous case a rich structure emerges, where the $(d-1)$-dimensional sphere $\S^{d-1} \subset \R^d$ plays a special role. This covers the case of lifted problems of Section~\ref{subsec:lifting} when $\phi$ is $1$-homogeneous and neural networks with ReLU activation functions.

\begin{assumptions}\label{ass:2homogeneous}
The domain is $\Omega=\R^{d}$ with $d\geq 2$ and $\Phi$ is differentiable with $d\Phi$ locally Lipschitz, $V$ is semiconvex and $V$ and $\Phi$ are both positively $2$-homogeneous. Moreover, 
\begin{enumerate}[(i)]
\item \emph{(smooth convex loss)} The loss $R$ is convex, differentiable with differential $dR$ Lipschitz on bounded sets and bounded on sublevel sets,\label{subass:loss}
\item \emph{(Sard-type regularity)} For all $f\in \F$, the set of regular values\footnote{For a function $g:\Theta\to\R$, a regular value is a real number $\alpha$ in the range of $g$ such that $g^{-1}(\alpha)$ is included in an open set where $g$ is differentiable and where $dg$ does not vanish.} of $\theta \in \S^{d-1}\mapsto \langle f, \Phi(\theta)\rangle + V(\theta)$ is dense in its range (it is in fact sufficient that this holds for functions $f$ which are of the form $f = R'(\int \Phi \d \mu)$ for some $\mu \in \M_+(\Omega)$). \label{subass:sard}
\end{enumerate}
\end{assumptions}
Taking the balls of radius $r>0$ as the family $(Q_r)_{r>0}$, these assumptions imply Assumptions~\ref{ass:regularity}. We believe that Assumption~\ref{ass:2homogeneous}-\eqref{subass:sard} is not of practical importance: it is only used to avoid some pathological cases in the proof of Theorem~\ref{th:mainhomogeneous}. By applying Morse-Sard's lemma~\cite{abraham1967transversal}, it is anyways fulfilled if the function in question is $d-1$ times continuously differentiable. 
We now state our first global convergence result. It involves a condition on the initialization, a \emph{separation} property, that can only be satisfied in the many-particle limit. In an ambient space $\Omega$, we say that a set $C$ \emph{separates} the sets $A$ and $B$ if any continuous path in $\Omega$ with endpoints in $A$ and $B$ intersects $C$.

\begin{theorem}\label{th:mainhomogeneous}
Under Assumptions~\ref{ass:2homogeneous}, let $(\mu_t)_{t\geq 0}$ be a Wasserstein gradient flow of $F$ such that, for some $0<r_a < r_b$, the support of $\mu_0$ is contained in $B(0,r_b)$ and separates the spheres $r_a \S^{d-1}$ and $r_b \S^{d-1}$. If $(\mu_t)_t$ converges to $\mu_\infty$ in $W_2$, then $\mu_\infty$ is a global minimizer of $F$ over $\M_+(\Omega)$. In particular, if $(\mathbf{u}_m(t))_{m\in \N,t\geq 0}$ is a sequence of classical gradient flows initialized in $B(0,r_b)$ such that $\mu_{m,0}$ converges weakly to $\mu_0$ then (limits can be interchanged)
\[
\lim_{t,m\to \infty} F(\mu_{m,t}) = \min_{\mu \in \M_+(\Omega)} F(\mu).
\]
\end{theorem}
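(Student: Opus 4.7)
The plan is to verify the optimality criterion of Proposition~\ref{prop:stationnary}, namely $F'(\mu_\infty) \geq 0$ on $\Omega$ together with $F'(\mu_\infty) = 0$ $\mu_\infty$-a.e. Since $\mu_\infty$ is a $W_2$-limit of a Wasserstein gradient flow, it is a stationary point of~\eqref{eq:gradientflow}, so $0 \in \partial F'(\mu_\infty)(u)$ for $\mu_\infty$-a.e.\ $u$. Because $\Phi$ and $V$ are both positively $2$-homogeneous, so is $F'(\mu_\infty)$, and Euler's identity $F'(\mu)(u) = \tfrac{1}{2}\, u \cdot \nabla F'(\mu)(u)$ (in a subgradient form that absorbs the nonsmoothness of $V$) promotes stationarity to $F'(\mu_\infty)=0$ on $\mathrm{supp}(\mu_\infty)$. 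The second condition is thus essentially free, and the real work is to establish $F'(\mu_\infty)\geq 0$, which by $2$-homogeneity reduces to $F'(\mu_\infty)(\theta)\geq 0$ for every $\theta \in \S^{d-1}$.

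To exploit $2$-homogeneity dynamically, I would perform a radial-angular decomposition of the particle trajectories. Since $-\nabla F'(\mu_t)$ is positively $1$-homogeneous, writing $u(t) = r(t)\theta(t)$ with $\theta(t)\in\S^{d-1}$ decouples the motion into a purely angular ODE $\dot\theta = v_T(\mu_t,\theta)$ on $\S^{d-1}$---depending only on direction and on $\mu_t$---together with the multiplicative radial law $\dot r / r = -2\, F'(\mu_t)(\theta)$ that follows from Euler. Provided $v_T$ is Lipschitz in $\theta$ on the sphere (which Assumptions~\ref{ass:2homogeneous} supply, with subgradient effects from $V$ absorbed through its semiconvexity in the standard Wasserstein manner), the induced angular flow $\Theta_t \colon \S^{d-1}\to\S^{d-1}$ is a homeomorphism for every finite~$t$.

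The role of this decomposition is to convert the topological separation hypothesis into a preserved quantity. Since $\mathrm{supp}(\mu_0)\subset B(0,r_b)$ separates $r_a\S^{d-1}$ from $r_b\S^{d-1}$, each radial segment $[r_a\theta, r_b\theta]$ must meet the initial support, i.e.\ the angular projection $\pi\colon u\mapsto u/|u|$ sends $\mathrm{supp}(\mu_0)$ onto all of~$\S^{d-1}$. Because $\pi$ commutes with the characteristic flow through $\Theta_t$, one obtains $\pi(\mathrm{supp}(\mu_t)) = \Theta_t(\pi(\mathrm{supp}(\mu_0))) = \Theta_t(\S^{d-1}) = \S^{d-1}$, so for every finite $t$ every direction of the sphere is realised by some particle of $\mathrm{supp}(\mu_t)$ at strictly positive radius.

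I would conclude by contradiction. Assume $F'(\mu_\infty)(\theta^*) = -2c < 0$ at some $\theta^* \in \S^{d-1}$. Continuity of $\mu\mapsto F'(\mu)$ in $W_2$ and of $F'(\mu_\infty)$ on the sphere produces a neighborhood $U\subset\S^{d-1}$ of $\theta^*$ and a time $T$ such that $F'(\mu_t)(\theta) < -c$ for every $\theta\in U$ and every $t\geq T$; the radial law then inflates the norm of any particle with direction in~$U$ by a factor $e^{2c\,\Delta t}$ per time increment spent inside the cone over~$U$, so a positive mass of such particles trapped in~$U$ for arbitrarily long times would blow up $\int |u|^2\,\d\mu_t$ and contradict $W_2$-convergence to~$\mu_\infty$. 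The cleanest way to close the loop is to pass the angular surjectivity of the previous paragraph to the limit, producing a point $r_*\theta^* \in \mathrm{supp}(\mu_\infty)$ with $r_*>0$; stationarity then forces $F'(\mu_\infty)(r_*\theta^*)=0$, hence $F'(\mu_\infty)(\theta^*)=0$ by $2$-homogeneity, contradicting the choice of $\theta^*$. The many-particle statement follows at once from the Wasserstein conclusion combined with Theorem~\ref{th:manyparticle}. \emph{The main obstacle} is precisely this last limit step: $\mathrm{supp}(\mu_\infty)$ is not in general the $W_2$-limit of the supports $\mathrm{supp}(\mu_t)$, and one must rule out a radial collapse of the angular covering toward the origin before upgrading angular surjectivity from $t<\infty$ to $t=\infty$. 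This appears to demand quantitative control on the radial dynamics (via the $\lambda$-convexity of $V$ and the energy dissipation of the flow), and the Sard-type condition in Assumption~\ref{ass:2homogeneous} is presumably what rules out pathological critical configurations near~$\theta^*$.
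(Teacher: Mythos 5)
Your skeleton matches the paper's strategy (stationarity of the limit gives $F'(\mu_\infty)=0$ on the support; the real task is $F'(\mu_\infty)\geq 0$ on $\S^{d-1}$; the separation of the initial support is propagated as an ``angular covering''; a negative value of $F'$ would force radial blow-up), but two steps are genuinely unjustified. First, the claim that the angular flow $\Theta_t$ is a homeomorphism is false in general under Assumptions~\ref{ass:2homogeneous}: $V$ is only semiconvex, the minimal-norm velocity field is merely one-sided Lipschitz, trajectories can merge in finite time, and $X_t$ (hence $\Theta_t$) need not be injective. This is exactly why the paper proves preservation of the separation property through a topological degree argument (Proposition~\ref{prop:degreespherical} and Lemma~\ref{lem:preservedhomogeneous}) rather than through a flow homeomorphism; what you actually need is only surjectivity of the covering, which can be rescued by a homotopy/degree argument, but not by the Lipschitz-flow reasoning you give. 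Likewise, the statement that a $W_2$-limit of the flow is automatically stationary, with $0\in\partial F'(\mu_\infty)$ $\mu_\infty$-a.e., is not free: the paper proves it (Lemma~\ref{lem:cvgcev2}) via the energy dissipation identity combined with uniform convergence of the velocity fields on the sphere after projecting by $h^2$.

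The decisive gap is the one you flag yourself: passing the angular surjectivity of the supports to $\mathrm{supp}(\mu_\infty)$ cannot work, because the limit support generically does \emph{not} cover the sphere (minimizers are typically sparse); mass in directions where $F'(\mu_\infty)>0$ collapses radially, and no quantitative control on the dynamics will restore the covering at $t=\infty$. The paper avoids the limit support entirely and argues at large finite times: letting $\nu$ be the (weak) limit of $h^2(\mu_t)$, the Sard-type assumption provides a \emph{regular} value $-\eta<0$ of $F'(\nu)$ restricted to $\S^{d-1}$, whose sublevel set $K$ has a boundary on which the tangential velocity points inward; by the $C^1$-stability estimate (Lemma~\ref{lem:estimates2}), for $t$ large the cone over $K$ is forward invariant and the radial velocity there is bounded below by a positive multiple of the radius. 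The preserved separation property guarantees $\mu_{t_0}$ charges this cone, so $h^2(\mu_t)(K)$ grows exponentially and $h^2(\mu_t)$ must leave the $\mathrm{BL}$-ball around $\nu$, contradicting convergence (Proposition~\ref{prop:escape2}). Your neighborhood $U$ of $\theta^*$ carries no such invariance, so the ``time spent in the cone over $U$'' cannot be bounded below and no second-moment blow-up is forced; the regular-value sublevel set is precisely what supplies the invariance, and this is where Assumption~\ref{ass:2homogeneous}-(ii) is used, not merely to exclude pathologies near $\theta^*$. Finally, the interchange of limits in $m$ and $t$ in the last claim also deserves the short monotonicity-plus-continuity argument the paper gives, rather than a direct appeal to Theorem~\ref{th:manyparticle}.
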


A proof and stronger statements are presented in Appendix~\ref{app:global}. There, we give a criterion for Wasserstein gradient flows to escape neighborhoods of non-optimal measures---also valid in the finite-particle setting---and then show that it is always satisfied by the flow defined above. We also weaken the assumption that $\mu_t$ converges: we only need a certain projection of $\mu_t$ to converge weakly. Finally, the fact that limits in $m$ and $t$ can be interchanged is not anecdotal: it shows that the convergence is not conditioned on a relative speed of growth of both parameters.

This result might be easier to understand by drawing an informal distinction between (i) the structural assumptions which are instrumental and (ii) the technical conditions which have a limited practical interest. The initialization and the homogeneity assumptions are of the first kind. The Sard-type regularity is in contrast a purely technical condition: it is generally hard to check and known counter-examples involve artificial constructions such as the Cantor function~\cite{whitney1935function}. Similarly, when there is compactness, a gradient flow that does not converge is an unexpected (in some sense adversarial) behavior, see a counter-example in~\cite{absil2005convergence}. We were however not able to exclude this possibility under interesting assumptions (see a discussion in Appendix \ref{app:GFconvergence}).

\subsection{The partially $1$-homogeneous case}
Similar results hold in the partially $1$-homogeneous setting, which covers the lifted problems of Section~\ref{subsec:lifting} when $\phi$ is bounded (e.g., sparse deconvolution and neural networks with sigmoid activation).
\begin{assumptions}\label{ass:bounded} The domain is $\Omega = \R\times \Theta$ with $\Theta \subset \R^{d-1}$, $\Phi(w,\theta) = w \cdot \phi(\theta)$ and $V(w,\theta)=\vert w \vert \tilde V(\theta)$ where $\phi$ and $\tilde V$ are bounded, differentiable with Lipschitz differential. Moreover, 
\begin{enumerate}[(i)]
\item \emph{(smooth convex loss)} The loss $R$ is convex, differentiable with differential $dR$ Lipschitz on bounded sets and bounded on sublevel sets,
\item  \emph{(Sard-type regularity)}
For all $f\in \F$, the set of regular values of $g_f: \theta \in \Theta \mapsto \langle f, \phi(\theta) \rangle + \tilde V(\theta)$ is dense in its range, and
\item \emph{(boundary conditions)} The function $\phi$ behaves nicely at the boundary of the domain: either 
\begin{enumerate}
\item $\Theta=\R^{d-1}$ and for all $f\in \F$, $\theta \in \S^{d-2}\mapsto g_f(r\theta)$ converges, uniformly in $C^1(\S^{d-2})$ as $r\to \infty$, to a function satisfying the Sard-type regularity,  
or \label{subass:boundaryunbounded}
\item $\Theta$ is the closure of an bounded open convex set and for all $f\in \F$, $g_f$ satisfies Neumann boundary conditions (i.e., for all $\theta \in \partial\Theta$, $d (g_f)_\theta(\vec n_\theta)=0$ where $\vec n_\theta \in \R^{d-1}$ is the normal to $\partial \Theta$ at $\theta$).
\end{enumerate}
\label{subass:boundary}
\end{enumerate}
\end{assumptions}
With the family of nested sets $Q_r:=[-r,r]\times \Theta$, $r>0$, these assumptions imply Assumptions~\ref{ass:regularity}.
The following theorem mirrors the statement of Theorem~\ref{th:mainhomogeneous}, but with a different condition on the initialization. The remarks after Theorem~\ref{th:mainhomogeneous} also apply here.
\begin{theorem}\label{th:mainbounded}
Under Assumptions~\ref{ass:bounded}, let $(\mu_t)_{t\geq 0}$ be a Wasserstein gradient flow of $F$ such that for some $r_0>0$, the support of $\mu_0$ is contained in $[-r_0,r_0] \times \Theta$ and separates $\{-r_0\}\times \Theta$ from $\{r_0\}\times \Theta$.
 If $(\mu_t)_t$ converges to $\mu_\infty$ in $W_2$, then $\mu_\infty$ is a global minimizer of $F$ over $\M_+(\Omega)$. In particular, if $(\mathbf{u}_m(t))_{m\in \N,t\geq 0}$ is a sequence of classical gradient flows initialized in $[-r_0,r_0]\times \Theta$ such that $\mu_{m,0}$ converges to $\mu_0$ in $W_2$ then (limits can be interchanged)
\[
\lim_{t,m\to \infty} F(\mu_{m,t}) = \min_{\mu \in \M_+(\Omega)} F(\mu).
\]
\end{theorem}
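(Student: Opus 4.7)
My plan is to mirror the strategy used for Theorem~\ref{th:mainhomogeneous}: combine a stationarity property of the $W_2$-limit $\mu_\infty$ with a separation-based escape argument that rules out any non-minimizing limit. The target is Proposition~\ref{prop:stationnary}, whose two conditions $F'(\mu_\infty)=0$ on $\mathrm{supp}(\mu_\infty)$ and $F'(\mu_\infty)\ge 0$ globally will be proved respectively from (i) stationarity plus partial 1-homogeneity in $w$, and (ii) persistence of the separation invariant plus the same 1-homogeneity. The second statement of the theorem (interchangeability of the $t$ and $m$ limits) then follows by combining (i)-(ii) with Theorem~\ref{th:manyparticle} and the lower semicontinuity of $F$.

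For (i), I would use the energy dissipation identity $\tfrac{d}{dt}F(\mu_t)=-\|v_t\|_{L^2(\mu_t)}^2$ together with $W_2$-convergence $\mu_t\to\mu_\infty$ and boundedness of $F(\mu_t)$ to force $v_t\to 0$ in $L^2(\mu_t)$ along a subsequence; by the minimal-norm characterization of the Wasserstein subdifferential this yields $0\in\partial F'(\mu_\infty)(u)$ for $\mu_\infty$-a.e.\ $u$. Writing $F'(\mu_\infty)(w,\theta)=w\langle R'(\int\Phi d\mu_\infty),\phi(\theta)\rangle+|w|\tilde V(\theta)$ and zeroing the $w$-component of the subdifferential gives $\langle R'(\int\Phi d\mu_\infty),\phi(\theta)\rangle\in-\partial_w V(w,\theta)$, from which a direct substitution produces $F'(\mu_\infty)(u)=0$ on $\mathrm{supp}(\mu_\infty)$---the Euler identity associated to the partial 1-homogeneity in $w$.

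For (ii), I argue by contradiction. Suppose $\alpha:=s^*\langle R'(\int\Phi d\mu_\infty),\phi(\theta^*)\rangle+\tilde V(\theta^*)<0$ for some $\theta^*\in\Theta$ and sign $s^*\in\{\pm 1\}$. Then by 1-homogeneity, $F'(\mu_\infty)(s^*w,\theta^*)=w\alpha\to-\infty$ as $w\to\infty$, so the ray $\{(s^*w,\theta^*):w>0\}$ is an unbounded descent direction. The separation condition on $\mathrm{supp}(\mu_0)$ is preserved along the (locally Lipschitz) flow $T_t$, because $T_t$ is a homeomorphism of $\Omega$ and the boundary condition in Assumption~\ref{ass:bounded}(\ref{subass:boundary}) keeps the transported fences $T_t(\{\pm r_0\}\times\Theta)$ confined to their respective sign half-spaces for finite times. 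Hence $\mathrm{supp}(\mu_t)$ still bridges $\{w<0\}$ and $\{w>0\}$; using connectedness of $\Theta$ and picking a generic regular $\theta^*$ via (\ref{subass:sard}), one shows that for every $t$ large enough $\mathrm{supp}(\mu_t)$ carries nontrivial mass in a neighborhood of some $(s^*w_0,\theta^*)$ with $w_0>0$. Uniform convergence $F'(\mu_t)\to F'(\mu_\infty)$ on bounded sets (from $W_2$-convergence plus the Lipschitz regularity of $dR$ and $d\Phi$) then gives this mass a $w$-velocity bounded below by $|\alpha|/2$ pointing outward, and 1-homogeneity makes this $w$-velocity grow linearly with the current $|w|$, so the mass escapes to infinity. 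This contradicts $W_2$-convergence, which enforces uniformly bounded second moments.

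The main obstacle is the rigorous persistence of the separation invariant, because the nonsmoothness of $V$ at $w=0$ allows trajectories to get stuck on the hyperplane $\{w=0\}$ (the $\theta$-velocity is proportional to $|w|$), so in principle the transported sheet $T_t(\mathrm{supp}(\mu_0))$ could degenerate onto $\{w=0\}$ and carry no mass on the sign side where the descent lives. The Sard-type regularity in (\ref{subass:sard}) and the boundary conditions in (\ref{subass:boundary}) are precisely the technical devices that exclude such level-set pathologies; the unbounded case $\Theta=\R^{d-1}$ additionally needs a radial compactification of $\Theta$ furnished by (\ref{subass:boundaryunbounded}). A secondary difficulty, shared with Theorem~\ref{th:mainhomogeneous} and presumably handled by the same abstract escape criterion mentioned in the discussion after it, is converting the qualitative descent picture into a quantitative contradiction with the fixed $W_2$-limit.
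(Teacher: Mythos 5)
Your overall architecture (stationarity of the limit via energy dissipation, a separation invariant preserved by the flow, and an escape-by-descent contradiction against the $W_2$-limit) is the same as the paper's, and your step (i) is essentially Lemma~\ref{lem:cvgcev1} obtained by the energy argument plus Euler's identity in $w$. However, the heart of the partially $1$-homogeneous case is exactly the point you dispatch with ``one shows that for every $t$ large enough $\mathrm{supp}(\mu_t)$ carries nontrivial mass in a neighborhood of some $(s^*w_0,\theta^*)$ with $w_0>0$'', and this does not follow from the tools you invoke. The separation property only guarantees that $\mathrm{supp}(\mu_t)$ meets the vertical line $\R\times\{\theta^*\}$, possibly only at points with the \emph{wrong} sign of $w$ (or at $w=0$, where the $\theta$-velocity vanishes since it is proportional to $\vert w\vert$, so particles can sit on $\{w=0\}$ and never see the descent region). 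Sard-type regularity and the boundary conditions do not by themselves exclude this: in the paper they are used to build a sublevel set $K^+$ with nondegenerate boundary and to control behavior at infinity, but the actual crossing mechanism is the dedicated ``ridge-riding'' Lemma~\ref{lem:rideridge}: one picks $\theta_0$ a local minimum of $g_\nu$ in the interior of $K^+$ (Neumann boundary conditions, or the compactification when $\Theta=\R^{d-1}$, guarantee $\nabla g_\nu(\theta_0)=0$ even at the boundary), and an explicit Gr\"onwall integration shows that a particle at $(w_0,\theta_0)$ with $w_0\in[-M,0]$ has $w$-velocity bounded below by $\eta/2$ while $\vert\theta(t)-\theta_0\vert$ stays small, so it reaches $\{w>0\}\times K^+$ in finite time; only then does the escape criterion (Proposition~\ref{prop:escape1}) apply. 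Without an argument of this type your contradiction never gets off the ground, so this is a genuine gap rather than a technicality.

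Two further points. First, your persistence-of-separation argument rests on ``$T_t$ is a homeomorphism of $\Omega$'', but the velocity field is discontinuous across $\{w=0\}$ (because $V(w,\theta)=\vert w\vert\tilde V(\theta)$ is non-differentiable there), so the flow of Lemma~\ref{lem:flowrepresentation} is only forward-unique and Lipschitz on the sets $Q_r$; trajectories may merge and $X_t$ need not be invertible. This is precisely why the paper proves the separation stability (Lemma~\ref{lem:preservedbounded}) via topological degree (Proposition~\ref{prop:setbounded}) rather than via a homeomorphism. Second, a small error: in this setting $\partial_w F'(\mu)$ is $0$-homogeneous in $w$, so the outward $w$-velocity is bounded below by a constant but does \emph{not} grow linearly with $\vert w\vert$ (that is the $2$-homogeneous case); the constant bound suffices for your escape step, but the confinement of the escaping particle inside the negative region also needs the sublevel-set construction (inward-pointing gradient on $\partial K^+$) rather than just ``a neighborhood of $\theta^*$''. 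The interchange-of-limits part of your plan is fine and matches the paper's final lemma in Appendix~\ref{app:mainproof}.
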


\section{Case studies and numerical illustrations}\label{sec:case}
In this section, we apply the previous abstract statements to specific examples and show on synthetic experiments that the particle-complexity to reach global optimality is very favorable.

\subsection{Sparse deconvolution}\label{subsec:spikes}
For sparse deconvolution, it is typical to consider a signal $y \in \F := L^2(\Theta)$ on the $d$-torus $\Theta=\R^d/\mathbb{Z}^d$. The loss function is $R(f)= (1/2\lambda)\Vert y - f\Vert^2_{L^2}$ for some $\lambda>0$, a parameter that increases with the noise level and the regularization is $V(w,\theta)=\vert w\vert$. Consider a filter impulse response $\psi: \Theta \to \R$ and let $ \Phi(w,\theta): x \mapsto w \cdot \psi(x-\theta)$. The object sought after is a signed measure on $\Theta$, which is obtained from a probability measure on $\R\times \Theta$ by applying a operator defined by $h_1(\mu)(B)= \int_\R w \d \mu(w,B)$ for all measurable $B \subset \Theta$. We show in Appendix~\ref{app:case} that Theorem~\ref{th:mainbounded} applies.

\begin{proposition}[Sparse deconvolution]
Assume that the filter impulse response $\psi$ is $\min\{ 2, d\}$ times continuously differentiable, and that the support of $\mu_0$ contains $\{0\}\times \Theta$. If the projection $(h^1(\mu_t))_t$ of the Wasserstein gradient flow of $F$ weakly converges to $\nu \in \M(\Theta)$, then $\nu$ is a global minimizer of
\[
\min_{\mu \in \M(\Theta)} \frac{1}{2\lambda} \left\Vert y - \tint \psi \d \mu \right\Vert^2_{L^2} + \vert \mu\vert(\Theta).
\]
\end{proposition}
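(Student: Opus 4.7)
The plan is to instantiate Theorem~\ref{th:mainbounded} for the lifted sparse deconvolution problem and then pass back to the signed measure $\nu$ via the projection $h_1$. I would set $\phi(\theta)\colon x\mapsto \psi(x-\theta)$ and $\tilde V\equiv 1$, so that $\Phi(w,\theta)=w\phi(\theta)$ and $V(w,\theta)=|w|\tilde V(\theta)$ match the template of Assumptions~\ref{ass:bounded}. The lifting equivalence of Appendix~\ref{app:lifting} then yields $F^*=J^*$, with $\nu=h_1(\mu)$ whenever $\mu$ is a minimizer of $F$. It therefore suffices to show that any limit $\mu_\infty$ of $(\mu_t)$ with $h_1(\mu_\infty)=\nu$ is a global minimizer of $F$.

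Next, I would verify Assumptions~\ref{ass:bounded}. The map $\phi\colon\Theta\to L^2(\Theta)$ is bounded and admits a Lipschitz differential since $\psi\in C^{\min\{2,d\}}$ on the compact torus; $\tilde V\equiv 1$ is trivially smooth and bounded; the quadratic loss $R$ clearly satisfies item~(i). For the Sard-type regularity~(ii), I would apply Morse--Sard to $g_f(\theta)=(f\star\check\psi)(\theta)+1$, using that the convolution inherits the $C^{\min\{2,d\}}$ regularity of $\psi$ uniformly in $f$. The boundary condition~(iii) is vacuous on the torus, which has no boundary; the arguments of Theorem~\ref{th:mainbounded} carry over unchanged in this compact boundary-free setting (this is essentially a simplification, since no Neumann conditions need be checked).

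The initialization hypothesis is then straightforward: pick any $r_0>0$ with $\spt\mu_0\subset[-r_0,r_0]\times\Theta$. Because $\{0\}\times\Theta\subset\spt\mu_0$ and $\Theta$ is path-connected, every continuous path in $[-r_0,r_0]\times\Theta$ from $\{-r_0\}\times\Theta$ to $\{r_0\}\times\Theta$ must cross the hypersurface $\{w=0\}$ and hence meet $\spt\mu_0$; the separation condition of Theorem~\ref{th:mainbounded} is therefore satisfied. Invoking the sharpened version discussed after the theorem, which only requires weak convergence of a projection of $\mu_t$, the assumed convergence $h_1(\mu_t)\weakto\nu$ is enough to conclude $F(\mu_\infty)=F^*$ for any accumulation point $\mu_\infty$ with $h_1(\mu_\infty)=\nu$, and by the lifting equivalence $\nu$ minimizes $J$.

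The subtlest point I expect to be the main obstacle is the Sard-type regularity at the claimed level of smoothness: naive Morse--Sard on the $d$-torus requires $C^d$, whereas the proposition only assumes $C^{\min\{2,d\}}$. One must either invoke a refinement that exploits the convolution structure of $g_f$ (which as a function of $\theta$ is exactly as smooth as $\psi$ and no smoother, regardless of $f$), or appeal, as noted after Theorem~\ref{th:mainhomogeneous}, to the fact that this condition is purely technical and fails only in pathological adversarial constructions.
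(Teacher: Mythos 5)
Your proposal follows essentially the same route as the paper's own proof in Appendix~\ref{app:case}: verify Assumptions~\ref{ass:bounded} for $\phi(\theta)=\psi(\cdot-\theta)$ and $\tilde V\equiv 1$ on the boundary-free torus (so condition (iii) is moot), check the Sard-type regularity via Morse--Sard applied to $\theta\mapsto\langle f,\phi(\theta)\rangle$, observe that $\{0\}\times\Theta\subset\spt\mu_0$ separates $\{-r_0\}\times\Theta$ from $\{r_0\}\times\Theta$, and conclude through the projected-convergence strengthening (Theorem~\ref{th:cvgce1}) combined with the lifting equivalence of Appendix~\ref{app:lifting}. The smoothness wrinkle you flag is real but is equally present in the paper itself: its appendix invokes Morse--Sard under ``$d-1$ times continuously differentiable'' in the general notation (i.e.\ $C^d$ in the torus dimension), which exceeds the $C^{\min\{2,d\}}$ stated in the proposition once $d\geq 3$, so your fallback to treating the Sard condition as a technical assumption matches the paper's stance.
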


We show an example of such a reconstruction on the $1$-torus on Figure~\ref{fig:spikes}, where the ground truth consists of $m_0=5$ weighted spikes, $\psi$ is an ideal low pass filter (a Dirichlet kernel of order $7$) and $y$ is a noisy observation of the filtered spikes. The particle gradient flow is integrated with the forward-backward algorithm~\cite{combettes2011proximal} and the particles initialized on a uniform grid on $\{0\}\times \Theta$. 

\begin{figure}[h]
\centering
\begin{subfigure}{0.32\textwidth}
\centering
\includegraphics[scale=0.46,trim=0.8cm 0.2cm 0.8cm 0.2cm,clip]{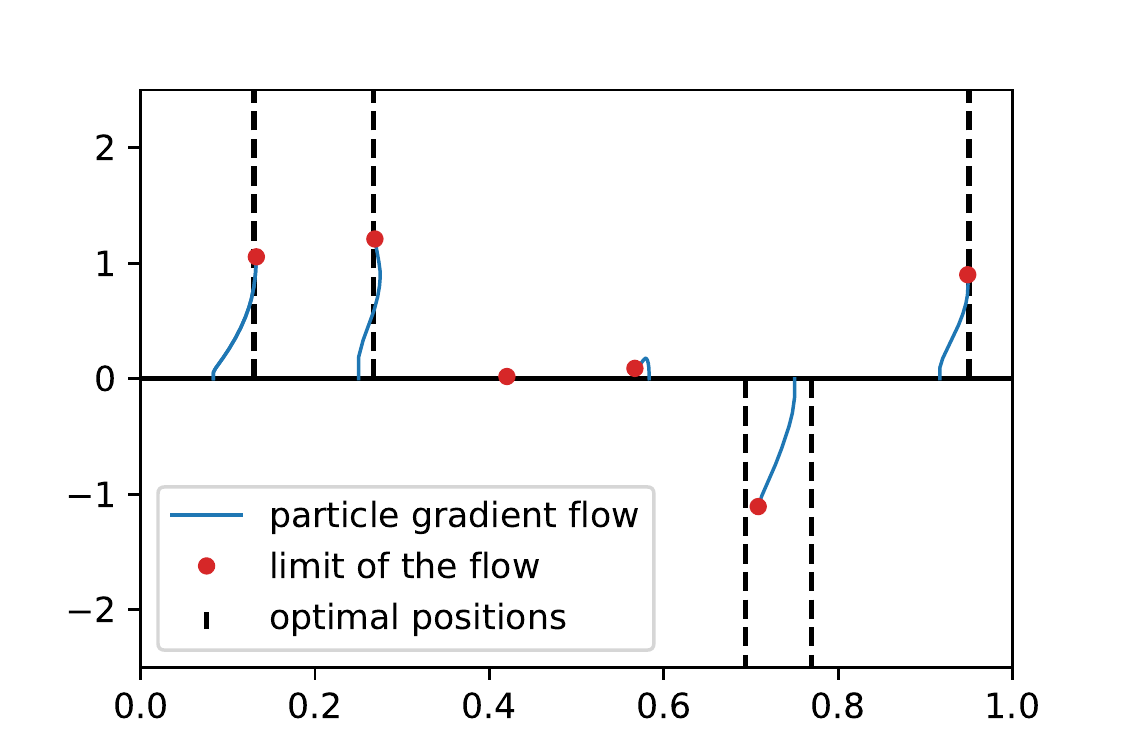}
\end{subfigure}
\begin{subfigure}{0.32\textwidth}
\centering
\includegraphics[scale=0.46,trim=0.8cm 0.2cm 0.8cm 0.2cm,clip]{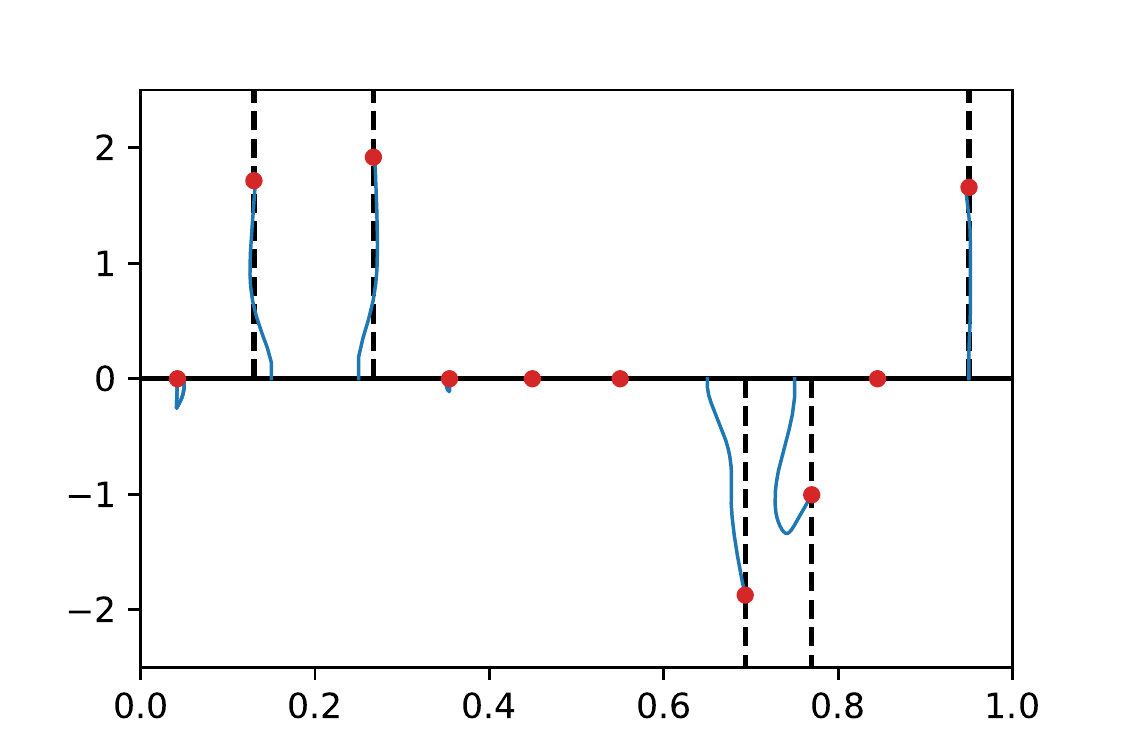}
\end{subfigure}
\begin{subfigure}{0.32\textwidth}
\centering
\includegraphics[scale=0.46,trim=0.8cm 0.2cm 0.8cm 0.2cm,clip]{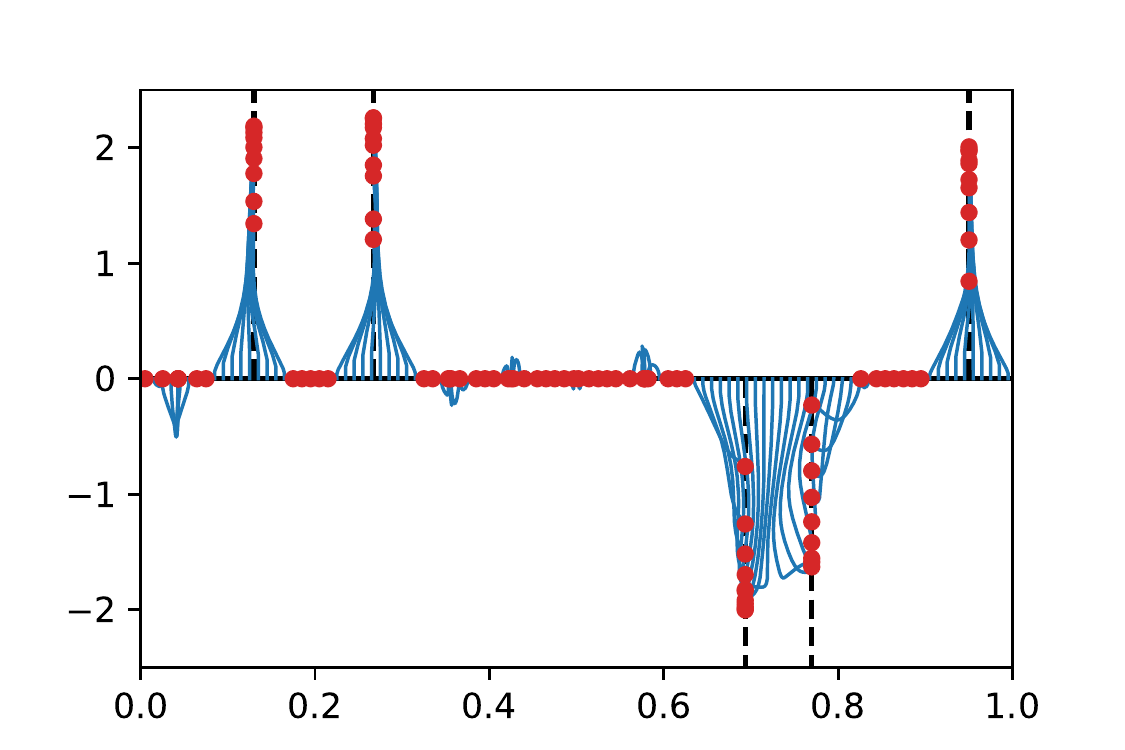}
\end{subfigure}
\caption{Particle gradient flow for sparse deconvolution on the $1$-torus (horizontal axis shows positions, vertical axis shows weights). Failure to find a minimizer with $6$ particles, success with $10$ and $100$ particles (an animated plot of this particle gradient flow can be found in Appendix~\ref{app:details}).}
\label{fig:spikes}
\end{figure}

\subsection{Neural networks with a single hidden layer}\label{subsec:neuralnet}
We consider a joint distribution of features and labels $\rho \in \P(\R^{d-2}\times \R)$ and $\rho_x \in \P(\R^{d-2})$ the marginal distribution of features. The loss is the expected risk $R(f)= \int \ell(f(x),y)\d \rho(x,y)$ defined on $\F = L^2(\rho_x)$, where $\ell: \R\times \R \to \R_+$ is either the squared loss or the logistic loss. Also, we set $\Phi(w,\theta): x \mapsto w \sigma(\sum_{i=1}^{d-2} \theta_i x_i + \theta_{d-1})$ for an activation function $\sigma: \R\to \R$. Depending on the choice of $\sigma$, we face two different situations.

\paragraph{Sigmoid activation.}
If $\sigma$ is a sigmoid, say $\sigma(s) = (1+e^{-s})^{-1}$, then Theorem~\ref{th:mainbounded}, with domain $\Theta = \R^{d-1}$ applies. The natural (optional) regularization term is $V(w,\theta)= \vert w \vert$, which amounts to penalizing the $\ell^1$ norm of the weights.
\begin{proposition}[Sigmoid activation]\label{prop:sigmoid}
Assume that $\rho_x$ has finite moments up to order $\min\{ 4, 2d-2\}$, that the support of $\mu_0$ is $\{0\}\times \Theta$ and that boundary condition~\ref{ass:bounded}-(iii)-(a) holds. If the Wasserstein gradient flow of $F$ converges in $W_2$ to $\mu_\infty$, then $\mu_\infty$ is a global minimizer of $F$.
\end{proposition}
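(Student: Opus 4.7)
The plan is to obtain the proposition as a direct application of Theorem~\ref{th:mainbounded}. Hence the task is (a) to check that Assumptions~\ref{ass:bounded} are met by the sigmoid setting, and (b) to verify the separation hypothesis on $\spt(\mu_0)$.

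For the structural part of Assumptions~\ref{ass:bounded}, I would identify $\phi(\theta)(x) = \sigma(\theta_1 x_1 + \cdots + \theta_{d-2}x_{d-2} + \theta_{d-1})$ and $\tilde V \equiv 1$, so that $\Phi(w,\theta) = w\phi(\theta)$ and $V(w,\theta) = |w|$ as required. Because $\sigma$ and all its derivatives are uniformly bounded, $\phi : \Theta \to \F = L^2(\rho_x)$ is bounded and smooth; the Lipschitzness of $d\phi$ on bounded sets reduces to a Cauchy--Schwarz estimate of the form
\[
\|d\phi_\theta - d\phi_{\theta'}\|_{\mathrm{op}} \lesssim \|\sigma''\|_\infty \, |\theta - \theta'| \, \Bigl( \tint \|(x,1)\|^4 \, \d\rho_x \Bigr)^{1/2},
\]
which is controlled by the fourth-moment clause of the assumption (for $d=2$, where only second moments are available, one uses an interpolation estimate instead). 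The convexity and the local Lipschitzness of $dR$ are classical both for the squared and the logistic losses.

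For Assumption~\ref{ass:bounded}-(ii), I would invoke Morse--Sard's lemma on $g_f : \R^{d-1} \to \R$ given by $g_f(\theta) = \langle f, \phi(\theta)\rangle + 1$. Differentiating under the integral sign pulls down factors of $(x,1)$, and Cauchy--Schwarz yields $g_f \in C^k$ provided $\tint \|x\|^{2k} \, \d\rho_x < \infty$. The classical statement of Morse--Sard on $\R^{d-1}$ requires $k = d-1$, which is precisely the source of the second clause $2d-2$ in the moment condition. As the parenthetical remark in the definition of the Sard-type regularity allows, it is enough to treat $f$ of the form $R'(\tint \Phi \, \d\mu)$, which lie in $L^2(\rho_x)$, so Cauchy--Schwarz is legitimate. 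Boundary condition~\ref{ass:bounded}-(iii)-(a) is explicitly assumed in the hypotheses.

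Finally, $\spt(\mu_0) = \{0\} \times \Theta$ is contained in $[-r_0, r_0]\times \Theta$ for every $r_0 > 0$, and it separates $\{-r_0\}\times \Theta$ from $\{r_0\}\times \Theta$: along any continuous path joining these two hyperplanes in $\R \times \Theta$, the first coordinate is a continuous real-valued function passing from $-r_0$ to $r_0$, and thus vanishes somewhere by the intermediate value theorem. Both hypotheses of Theorem~\ref{th:mainbounded} are therefore satisfied, and the conclusion follows. The most delicate step is the Sard-type verification, which requires balancing the smoothness of $g_f$ in $\theta$ against the integrability available from $\rho_x$.
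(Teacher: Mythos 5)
Your proposal is correct and follows essentially the same route as the paper: verify Assumptions~\ref{ass:bounded} (smoothness and Lipschitz differential of $\phi$ via the fourth-moment estimate, as in the paper's Lemma~\ref{lem:differentiablesigmoid}; Sard-type regularity via differentiation under the integral, Cauchy--Schwarz, and Morse--Sard, which is where the $2d-2$ moments enter), take the boundary condition as an explicit hypothesis, check the trivial separation of $\{-r_0\}\times\Theta$ from $\{r_0\}\times\Theta$ by $\{0\}\times\Theta$, and invoke Theorem~\ref{th:mainbounded}. The only loose end is your parenthetical ``interpolation estimate'' for $d=2$ (with only second moments one gets continuity, not Lipschitzness, of $d\phi$ by such an argument), but that case is degenerate since the feature space is then zero-dimensional, so it does not affect the proof.
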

Note that we have to explicitly assume the boundary condition~\ref{ass:bounded}-(iii)-(a) because the Sard-type regularity at infinity cannot be checked \emph{a priori} (this technical detail is discussed in Appendix~\ref{appsubsec:sigmoid}).

\paragraph{ReLU activation.} The activation function $\sigma(s)= \max\{0, s\}$ is positively $1$-homogeneous: this makes $\Phi$ $2$-homogeneous and corresponds, at a formal level, to the setting of Theorem~\ref{th:mainhomogeneous}. An admissible choice of regularizer here would be the (semi-convex) function $V(w,\theta) = \vert w\vert \cdot \vert \theta \vert$~\cite{bach2017breaking}. 
However, as shown in Appendix~\ref{app:ReLU}, the differential $d\Phi$ has discontinuities: this prevents altogether from defining gradient flows, even in the finite-particle regime. 

Still, a statement holds for a different parameterization of the \emph{same class} of functions, which makes $\Phi$ differentiable. To see this, consider a domain $\Theta$ which is the disjoint union of $2$ copies of $\R^{d}$. On the first copy, define $\Phi(\theta): x \mapsto  \sigma(\sum_{i=1}^{d-1} s(\theta_i) x_i + s(\theta_{d}))$ where $s(\theta_i)=  \theta_i \vert \theta_i\vert$ is the signed square function. On the second copy, $\Phi$ has the same definition but with a minus sign. This trick allows to have the same expression power than classical ReLU networks. In practice, it corresponds to simply putting, say, random signs in front of the activation. The regularizer here can be $V(\theta)=\vert \theta \vert^2$.

\begin{proposition}[Relu activation]\label{prop:relu}
Assume that $\rho_x \in \P(\R^{d-1})$ has finite second moments, that the support of $\mu_0$ is $r_0\S^{d-1}$ for some $r_0>0$ (on both copies of $\R^{d}$) and that the Sard-type regularity Assumption~\ref{ass:2homogeneous}-\eqref{subass:sard} holds. If the Wasserstein gradient flow of $F$ converges in $W_2$ to $\mu_\infty$, then $\mu_\infty$ is a global minimizer of $F$.
\end{proposition}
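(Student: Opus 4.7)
The plan is to verify that the reparametrized problem falls into the setting of Theorem~\ref{th:mainhomogeneous}. First I would check homogeneity: for $\lambda>0$, the signed-square satisfies $s(\lambda \theta_i)=\lambda^2 s(\theta_i)$, so the pre-activation $\sum_i s(\theta_i)x_i+s(\theta_d)$ is $2$-homogeneous in $\theta$, and composition with the $1$-homogeneous $\sigma=\max\{0,\cdot\}$ yields that $\Phi$ is positively $2$-homogeneous. Similarly $V(\theta)=|\theta|^2$ is $2$-homogeneous (and convex, hence semiconvex). Next, I would check the basic regularity of $\Phi$: $s$ is $C^1$ with $s'(t)=2|t|$ which is globally Lipschitz, so the pre-activation is $C^1$ with locally Lipschitz differential; because $\sigma$ is $1$-homogeneous, any point where the pre-activation vanishes is absorbed by homogeneity, and one checks directly (as in the appendix reference for the failure of plain ReLU) that $d\Phi$ is locally Lipschitz on each bounded set of $\R^d$. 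Boundedness and Lipschitzness of $d\Phi$ and $\partial V$ on balls $Q_r=\bar B(0,r)$ follow from the second-moment assumption on $\rho_x$, which ensures $\Phi(\theta)\in L^2(\rho_x)$ with the requisite control.

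For the loss $R$, the assumptions of Theorem~\ref{th:mainhomogeneous}-\eqref{subass:loss} are standard for the squared or logistic loss: convexity, differentiability with $dR$ Lipschitz on bounded sets and bounded on sublevel sets is straightforward to verify from the second-moment hypothesis on $\rho_x$. The Sard-type condition~\ref{ass:2homogeneous}-\eqref{subass:sard} is assumed by hypothesis, so there is nothing further to check there.

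The main structural check is the separation condition on the initialization. The domain is the disjoint union of two copies of $\R^d$, with $\Phi$ defined with opposite signs on each copy; the gradient flow decouples across copies because $\Phi$, $V$ and the continuity equation act componentwise, so it suffices to treat each copy as an instance of $\Omega=\R^d$ (a minor adaptation of Theorem~\ref{th:mainhomogeneous} that handles finite disjoint unions by superposition). On each copy the support of $\mu_0$ is $r_0\S^{d-1}$, which by the Jordan–Brouwer separation theorem separates $r_a\S^{d-1}$ from $r_b\S^{d-1}$ whenever $r_a<r_0<r_b$, and is contained in $B(0,r_0)\subset B(0,r_b)$. Thus the hypotheses of Theorem~\ref{th:mainhomogeneous} are met on each copy, and the convergence of $(\mu_t)_t$ in $W_2$ implies that $\mu_\infty$ is a global minimizer of $F$ over $\M_+(\Omega)$.

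The step I expect to be the most delicate is the regularity of $d\Phi$ at points where a coordinate $\theta_i$ vanishes: the signed-square trick is precisely designed to smooth out the discontinuity of the ReLU derivative, and one must verify quantitatively that the resulting $d\Phi$ is Lipschitz on bounded sets (uniformly in $x$ on the support of $\rho_x$, after integrating against $\rho_x$ to get a map to $\F$). The decoupled-union handling is a small technical nuisance but not a genuine difficulty; the rest of the argument is then a direct invocation of Theorem~\ref{th:mainhomogeneous}.
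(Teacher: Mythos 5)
Your overall strategy (check $2$-homogeneity of $\Phi$ and $V$, verify the regularity and loss assumptions, note that the spheres $r_0\S^{d-1}$ satisfy the separation property, and invoke Theorem~\ref{th:mainhomogeneous}) is the same as the paper's. The genuine gap is in your regularity step for $\Phi$. You locate the difficulty at points where a coordinate $\theta_i$ vanishes and claim it is ``absorbed by homogeneity,'' with boundedness and Lipschitzness of $d\Phi$ following ``from the second-moment assumption on $\rho_x$.'' Neither claim is right. The signed square $s$ is $C^1$, so nothing goes wrong where $\theta_i=0$; the non-smoothness sits at the kink of $\sigma$, i.e.\ at parameters $\theta$ and inputs $x$ with $s(\theta)\cdot(x,1)=0$, and homogeneity of $\sigma$ does nothing to remove it: for fixed $x$ the map $\theta\mapsto \sigma(s(\theta)\cdot(x,1))$ is genuinely non-differentiable on that set. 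Differentiability of $\Phi$ as a map into $\F=L^2(\rho_x)$ only holds after integrating in $x$, and it requires that $\rho_x$ give negligible mass to neighborhoods of the hyperplane $\{x:\ s(\theta)\cdot(x,1)=0\}$; this is exactly why the paper's Lemma~\ref{lem:differentiablerelu} assumes $\rho_x$ has a \emph{density} and proves differentiability by splitting the integral over the thin set $S_{\theta,\epsilon}$ near that hyperplane (the same technique as in Lemma~\ref{lem:nondifferentiablerelu}). Finite second moments alone are not enough: take $\rho_x$ a Dirac mass, so $\F\cong\R$ and $\Phi(\theta)=\sigma(s(\theta)\cdot(x_0,1))$, which has a kink wherever the pre-activation vanishes with nonzero gradient. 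Your further claim that one ``checks directly'' that $d\Phi$ is locally Lipschitz is also not substantiated: across the activation hyperplane the Heaviside factor in $d\Phi_\theta$ changes on a wedge whose $\rho_x$-mass is only of order $\vert\theta-\theta'\vert$, so in $L^2$ one generically gets H\"older-$1/2$ control rather than a Lipschitz bound; note the paper itself only establishes differentiability at this point and does not verify more.

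Two smaller points. First, your argument that the two copies of $\R^d$ ``decouple'' is incorrect as stated: the velocity field on each copy is driven by $F'(\mu_t)$, which involves $R'\bigl(\int\Phi\,\d\mu_t\bigr)$ with the integral taken over \emph{both} copies, so the dynamics are coupled through the loss. What is true, and what makes the reduction work, is that $F'(\mu)$ restricted to each copy is still positively $2$-homogeneous, the escape set of Proposition~\ref{prop:escape2} is a cone over a sublevel set of its restriction to the union of the two unit spheres, and the separation/stability arguments apply copy by copy; since $\mu_0$ charges $r_0\S^{d-1}$ in \emph{both} copies, the relevant cone is always charged. Second, be aware that the proposition's hypothesis as you restate it (finite second moments) does not by itself support the regularity verification; the paper's own proof uses the density of $\rho_x$, and your proof should state and use that assumption where the differentiability of $\Phi$ is established.
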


We display on Figure~\ref{fig:ReLU} particle gradient flows for training a neural network with a single hidden layer and ReLU activation in the classical (non-differentiable) parameterization, with $d=2$ (no regularization). Features are normally distributed, and the ground truth labels are generated with a similar network with $m_0=4$ neurons. The particle gradient flow is ``integrated'' with mini-batch SGD and the particles are initialized on a small centered sphere.

\begin{figure}[h]
\centering
\begin{subfigure}{0.32\textwidth}
\includegraphics[scale=0.44,trim=0.2cm 0.2cm 0.2cm 0.2cm,clip]{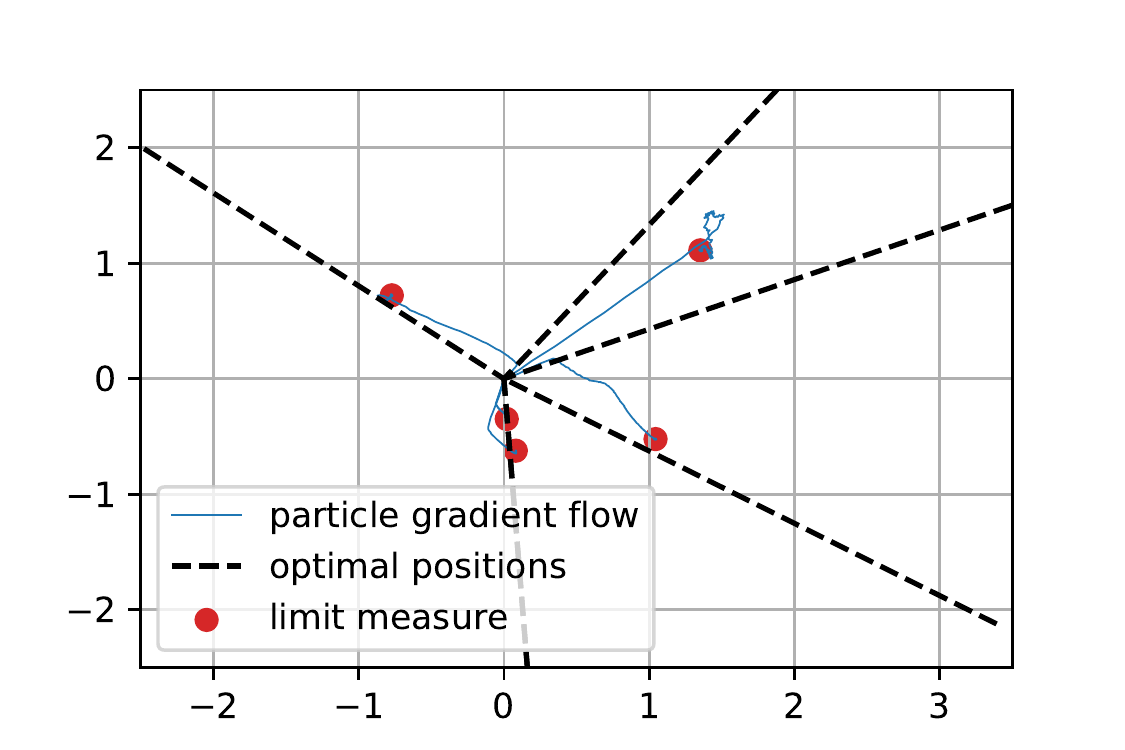}
\end{subfigure}
\begin{subfigure}{0.32\textwidth}
\includegraphics[scale=0.44,trim=0.2cm 0.2cm 0.2cm 0.2cm,clip]{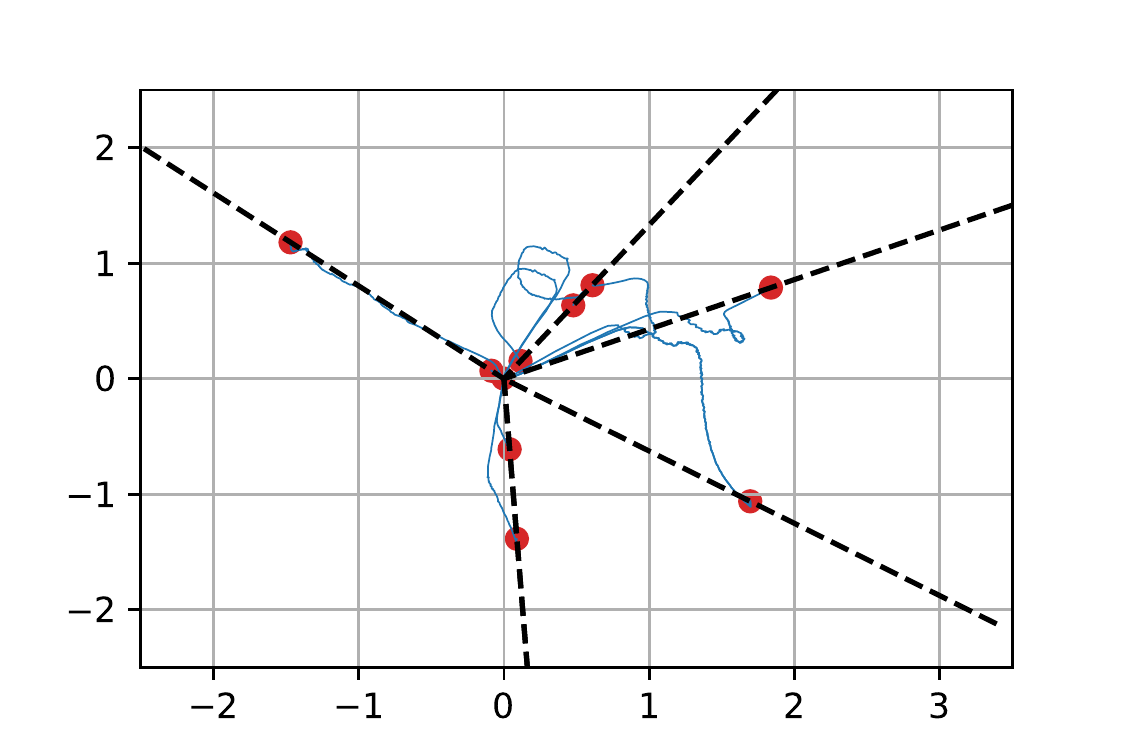}
\end{subfigure}
\begin{subfigure}{0.32\textwidth}
\includegraphics[scale=0.44,trim=0.2cm 0.2cm 0.2cm 0.2cm,clip]{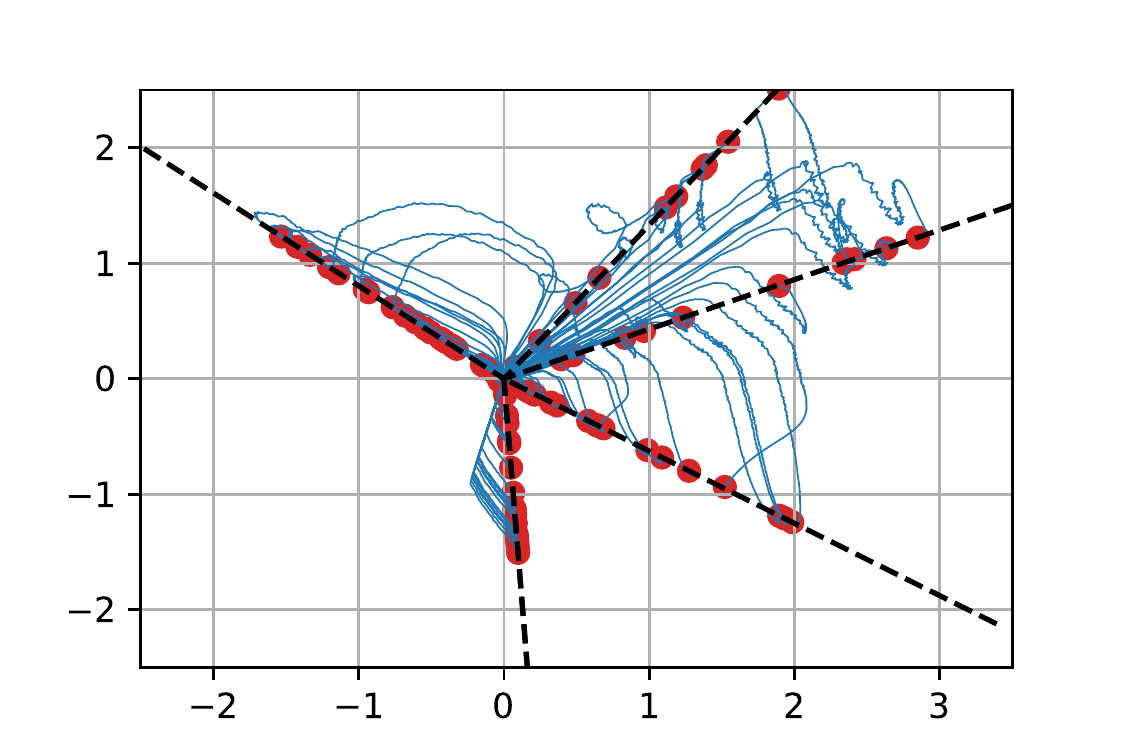}
\end{subfigure}
\caption{Training a neural network with ReLU activation. Failure with $5$ particles (a.k.a.\ neurons), success with $10$ and $100$ particles. We show the trajectory of $\vert w(t)\vert \cdot \theta(t) \in \R^2$ for each particle (an animated plot of this particle gradient flow can be found in Appendix~\ref{app:details}).}
\label{fig:ReLU}
\end{figure}

\subsection{Empirical particle-complexity}
Since our convergence results are non-quantitative, one might argue that similar---and much simpler to prove---asymptotical results hold for the method of distributing particles on the whole of $\Theta$ and simply optimizing on the weights, which is a convex problem. Yet, the comparison of the particle-complexity shown in Figure~\ref{fig:particlecomplexity} stands strongly in favor of particle gradient flows.
While exponential particle-complexity is unavoidable for the convex approach, we observed on several synthetic problems that particle gradient descent only needs a slight over-parameterization $m>m_0$ to find global minimizers within optimization error (see details in Appendix~\ref{app:details}).

\begin{figure}[h]
\centering
\begin{subfigure}{0.01\textwidth}
\rotatebox{90}{\hspace{0.5cm}\tiny{Excess loss at convergence}}
\end{subfigure}
\begin{subfigure}{0.32\textwidth}
\includegraphics[scale=0.46,trim=0.8cm 0.2cm 0.8cm 0.2cm,clip]{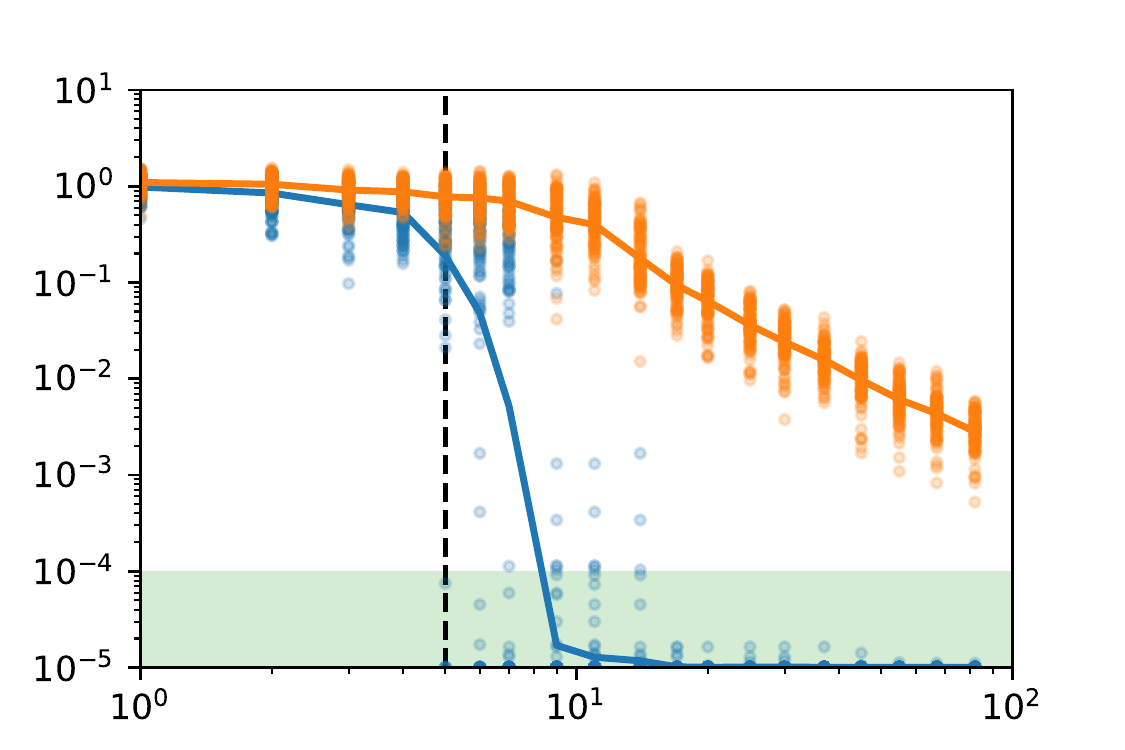}
\caption{Sparse deconvolution ($d=1$)}
\end{subfigure}
\begin{subfigure}{0.32\textwidth}
\includegraphics[scale=0.46,trim=0.8cm 0.2cm 0.8cm 0.2cm,clip]{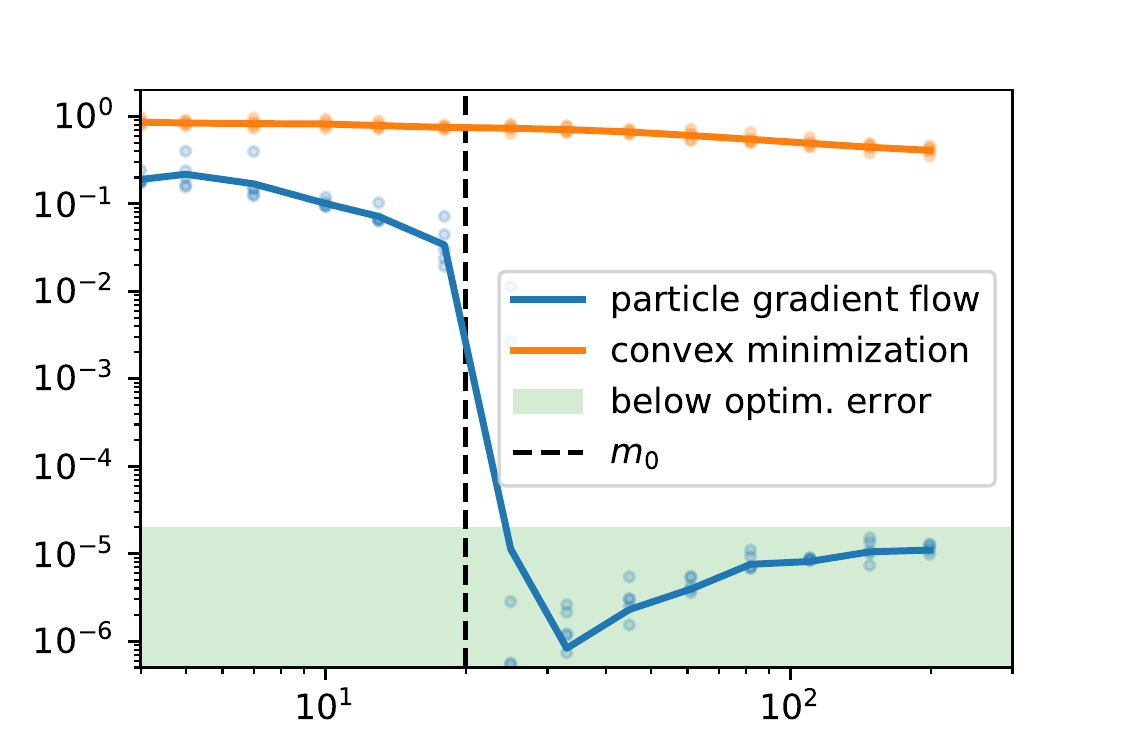}
\caption{ReLU activation ($d=100$)}
\end{subfigure}
\begin{subfigure}{0.32\textwidth}
\includegraphics[scale=0.46,trim=0.8cm 0.2cm 0.8cm 0.2cm,clip]{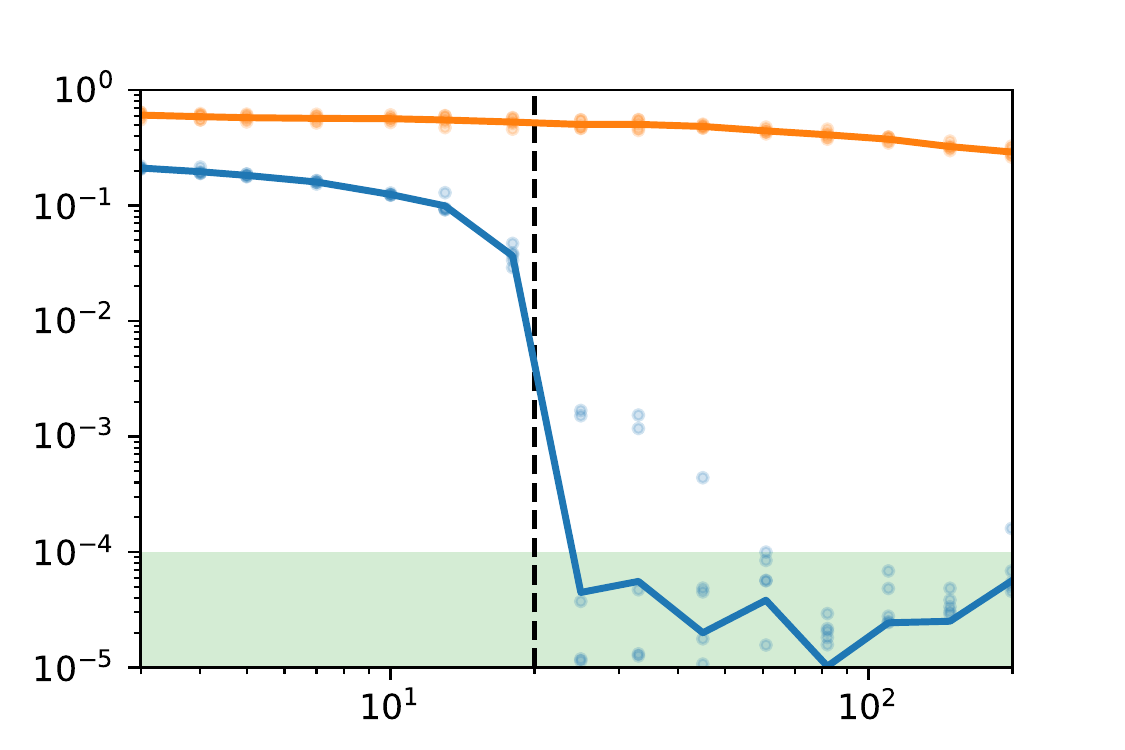}
\caption{Sigmoid activation ($d=100$)}
\end{subfigure}
\caption{Comparison of particle-complexity for particle gradient flow and convex minimization on a fixed grid: excess loss at convergence vs.\ number of particles. Simplest minimizer has $m_0$ particles.}
\label{fig:particlecomplexity}
\end{figure}

\section{Conclusion}
We have established asymptotic global optimality properties for a family of non-convex gradient flows. These results were enabled by the study of a Wasserstein gradient flow: this object simplifies the handling of many-particle regimes, analogously to a mean-field limit.  The particle-complexity to reach global optimality turns out very favorable on synthetic numerical problems. This confirms the relevance of our qualitative results and calls for quantitative ones  that would further exploit the properties of such particle gradient flows. Multiple layer neural networks are also an interesting avenue for future research.

\subsubsection*{Acknowledgments}
We acknowledge supports from grants from R\'egion Ile-de-France and the European Research Council (grant SEQUOIA 724063).

\bibliography{LC}

\begin{thebibliography}{10}

\bibitem{abraham1967transversal}
Ralph Abraham and Joel Robbin.
\newblock {\em Transversal mappings and flows}.
\newblock WA Benjamin New York, 1967.

\bibitem{absil2005convergence}
Pierre-Antoine Absil, Robert Mahony, and Benjamin Andrews.
\newblock Convergence of the iterates of descent methods for analytic cost
  functions.
\newblock {\em SIAM Journal on Optimization}, 16(2):531--547, 2005.

\bibitem{ambrosio2008gradient}
Luigi Ambrosio, Nicola Gigli, and Giuseppe Savar{\'e}.
\newblock {\em Gradient flows: in metric spaces and in the space of probability
  measures}.
\newblock Springer Science \& Business Media, 2008.

\bibitem{bach2017breaking}
Francis Bach.
\newblock Breaking the curse of dimensionality with convex neural networks.
\newblock {\em Journal of Machine Learning Research}, 18(19):1--53, 2017.

\bibitem{blanchet2016family}
Adrien Blanchet and J{\'e}r{\^o}me Bolte.
\newblock A family of functional inequalities: {\L{}}ojasiewicz inequalities
  and displacement convex functions.
\newblock {\em arXiv preprint arXiv:1612.02619}, 2016.

\bibitem{boyd2017alternating}
Nicholas Boyd, Geoffrey Schiebinger, and Benjamin Recht.
\newblock The alternating descent conditional gradient method for sparse
  inverse problems.
\newblock {\em SIAM Journal on Optimization}, 27(2):616--639, 2017.

\bibitem{bredies2013inverse}
Kristian Bredies and Hanna~Katriina Pikkarainen.
\newblock Inverse problems in spaces of measures.
\newblock {\em ESAIM: Control, Optimisation and Calculus of Variations},
  19(1):190--218, 2013.

\bibitem{browder1983fixed}
Felix~E. Browder.
\newblock Fixed point theory and nonlinear problems.
\newblock {\em Proc. Sym. Pure. Math}, 39:49--88, 1983.

\bibitem{catala2017low}
Paul Catala, Vincent Duval, and Gabriel Peyré.
\newblock A low-rank approach to off-the-grid sparse deconvolution.
\newblock {\em Journal of Physics: Conference Series}, 904(1):012015, 2017.

\bibitem{cohn1980measure}
Donald~L. Cohn.
\newblock {\em Measure theory}, volume 165.
\newblock Springer, 1980.

\bibitem{combettes2011proximal}
Patrick~L. Combettes and Jean-Christophe Pesquet.
\newblock Proximal splitting methods in signal processing.
\newblock In {\em Fixed-point algorithms for inverse problems in science and
  engineering}, pages 185--212. Springer, 2011.

\bibitem{de2012exact}
Yohann De~Castro and Fabrice Gamboa.
\newblock Exact reconstruction using {B}eurling minimal extrapolation.
\newblock {\em Journal of Mathematical Analysis and applications},
  395(1):336--354, 2012.

\bibitem{duval2015exact}
Vincent Duval and Gabriel Peyr{\'e}.
\newblock Exact support recovery for sparse spikes deconvolution.
\newblock {\em Foundations of Computational Mathematics}, 15(5):1315--1355,
  2015.

\bibitem{goodfellow2016deep}
Ian Goodfellow, Yoshua Bengio, and Aaron Courville.
\newblock {\em Deep Learning}.
\newblock MIT Press, 2016.

\bibitem{srebro}
Suriya Gunasekar, Blake~E. Woodworth, Srinadh Bhojanapalli, Behnam Neyshabur,
  and Nati Srebro.
\newblock Implicit regularization in matrix factorization.
\newblock In {\em Advances in Neural Information Processing Systems 30}, 2017.

\bibitem{haeffele2017global}
Benjamin~D. Haeffele and Ren{\'e} Vidal.
\newblock Global optimality in neural network training.
\newblock In {\em Proceedings of the IEEE Conference on Computer Vision and
  Pattern Recognition}, pages 7331--7339, 2017.

\bibitem{hauer2017kurdyka}
Daniel Hauer and Jos{\'e} Maz{\'o}n.
\newblock Kurdyka-{{\L{}}}ojasiewicz-{S}imon inequality for gradient flows in
  metric spaces.
\newblock {\em arXiv preprint arXiv:1707.03129}, 2017.

\bibitem{haykin1994neural}
Simon Haykin.
\newblock {\em Neural Networks: A Comprehensive Foundation}.
\newblock Prentice Hall, 1994.

\bibitem{jaggi}
Martin Jaggi.
\newblock Revisiting {F}rank-{W}olfe: Projection-free sparse convex
  optimization.
\newblock In {\em Proceedings of the International Conference on Machine
  Learning (ICML)}, 2013.

\bibitem{journee2010low}
Michel Journ{\'e}e, Francis Bach, P-A Absil, and Rodolphe Sepulchre.
\newblock Low-rank optimization on the cone of positive semidefinite matrices.
\newblock {\em SIAM Journal on Optimization}, 20(5):2327--2351, 2010.

\bibitem{kushner2003stochastic}
Harold Kushner and G.~George Yin.
\newblock {\em Stochastic approximation and recursive algorithms and
  applications}, volume~35.
\newblock Springer Science \& Business Media, 2003.

\bibitem{lasserre2010moments}
Jean-Bernard Lasserre.
\newblock {\em Moments, positive polynomials and their applications}, volume~1.
\newblock World Scientific, 2010.

\bibitem{li2017convergence}
Yuanzhi Li and Yang Yuan.
\newblock Convergence analysis of two-layer neural networks with {R}e{LU}
  activation.
\newblock In {\em Advances in Neural Information Processing Systems}, pages
  597--607, 2017.

\bibitem{mei2018mean}
Song Mei, Andrea Montanari, and Phan-Minh Nguyen.
\newblock A mean field view of the landscape of two-layer neural networks.
\newblock {\em Proceedings of the National Academy of Sciences},
  115(33):E7665--E7671, 2018.

\bibitem{nitanda2017stochastic}
Atsushi Nitanda and Taiji Suzuki.
\newblock Stochastic particle gradient descent for infinite ensembles.
\newblock {\em arXiv preprint arXiv:1712.05438}, 2017.

\bibitem{poon2018dual}
Clarice Poon, Nicolas Keriven, and Gabriel Peyr{\'e}.
\newblock A dual certificates analysis of compressive off-the-grid recovery.
\newblock {\em arXiv preprint arXiv:1802.08464}, 2018.

\bibitem{rockafellar97}
Ralph~T. Rockafellar.
\newblock {\em Convex Analysis}.
\newblock Princeton University Press, 1997.

\bibitem{rotskoff2018neural}
Grant~M Rotskoff and Eric Vanden-Eijnden.
\newblock Neural networks as interacting particle systems: Asymptotic convexity
  of the loss landscape and universal scaling of the approximation error.
\newblock {\em arXiv preprint arXiv:1805.00915}, 2018.

\bibitem{santambrogio2015optimal}
Filippo Santambrogio.
\newblock Optimal transport for applied mathematicians.
\newblock {\em Birk{\"a}user, NY}, 2015.

\bibitem{santambrogio2017euclidean}
Filippo Santambrogio.
\newblock $\{$Euclidean, metric, and Wasserstein$\}$ gradient flows: an
  overview.
\newblock {\em Bulletin of Mathematical Sciences}, 7(1):87--154, 2017.

\bibitem{scieur2017integration}
Damien Scieur, Vincent Roulet, Francis Bach, and Alexandre d'Aspremont.
\newblock Integration methods and optimization algorithms.
\newblock In {\em Advances in Neural Information Processing Systems}, pages
  1109--1118, 2017.

\bibitem{sirignano2018mean}
Justin Sirignano and Konstantinos Spiliopoulos.
\newblock Mean field analysis of neural networks.
\newblock {\em arXiv preprint arXiv:1805.01053}, 2018.

\bibitem{soltanolkotabi2017theoretical}
Mahdi Soltanolkotabi, Adel Javanmard, and Jason~D. Lee.
\newblock Theoretical insights into the optimization landscape of
  over-parameterized shallow neural networks.
\newblock {\em arXiv preprint arXiv:1707.04926}, 2017.

\bibitem{soudry2017exponentially}
Daniel Soudry and Elad Hoffer.
\newblock Exponentially vanishing sub-optimal local minima in multilayer neural
  networks.
\newblock {\em arXiv preprint arXiv:1702.05777}, 2017.

\bibitem{venturi2018neural}
Luca Venturi, Afonso Bandeira, and Joan Bruna.
\newblock Neural networks with finite intrinsic dimension have no spurious
  valleys.
\newblock {\em arXiv preprint arXiv:1802.06384}, 2018.

\bibitem{wang2015functional}
Chu Wang, Yingfei Wang, Robert Schapire, et~al.
\newblock Functional {F}rank-{W}olfe boosting for general loss functions.
\newblock {\em arXiv preprint arXiv:1510.02558}, 2015.

\bibitem{whitney1935function}
Hassler Whitney et~al.
\newblock A function not constant on a connected set of critical points.
\newblock {\em Duke Mathematical Journal}, 1(4):514--517, 1935.

\end{thebibliography}
\bibliographystyle{plain}
\normalsize

\newpage

\section*{Supplementary material}
Supplementary material for the paper: ``On the Global Convergence of Gradient Descent for Over-parameterized Models using Optimal Transport'' authored by L\'ena\"ic Chizat and Francis Bach (NIPS 2018).

This appendix is organized as follows:
\begin{itemize}
\item Appendix~\ref{app:intro}: Introductory facts
\item Appendix~\ref{app:WGF}: Many-particle limit and Wasserstein gradient flow
\item Appendix~\ref{app:global}: Convergence to global minimizers
\item Appendix~\ref{app:case}: Case studies and numerical experiments
\end{itemize}
\appendix


\section{Introductory facts} \label{app:intro}

\subsection{Tools from measure theory}
In this paper, the term \emph{measure} refers to a \emph{finite} signed measure on $\R^d$, $d\geq 1$, endowed with its Borel $\sigma$-algebra. We write $\M(X)$ for the set of such measures concentrated on a measurable set $X\subset \R^d$. Hereafter, we gather some concepts and facts from measure theory that are used in the proofs.

\paragraph{Variation of a signed measure.} The Jordan decomposition theorem~\cite[Cor. 4.1.6]{cohn1980measure} asserts that any finite signed measure $\mu \in \M(\R^d)$ can be decomposed as $\mu=\mu_+-\mu_-$ where $\mu_+,\mu_-\in \M_+(\R^d)$. If $\mu_+$ and $\mu_-$ are chosen with minimal total mass, the \emph{variation} of $\mu$ is the nonnegative measure $\vert \mu \vert := \mu_+ + \mu_-$ and $\vert \mu \vert (\R^d)$ is the \emph{total variation norm} of $\mu$.

\paragraph{Support and concentration set.} The \emph{support} $\spt \mu$ of a measure $\mu \in \M_+(\R^d)$ is the complement of the largest open set of measure $0$, or, equivalently, the set of points which neighborhoods have positive measure. We say that $\mu$ is \emph{concentrated} on a set $S\subset \R^d$ if the complement of $S$ is included in a measurable set of measure $0$. In particular, $\mu$ is concentrated on $\spt \mu$. 

\paragraph{Pushforward.}
Let $X$ and $Y$ be measurable subsets of $\R^d$ and let $T:X\to Y$ be a measurable map. To any measure $\mu \in \M(X)$ corresponds a measure $T_\# \mu \in \M(Y)$ called the \emph{pushfoward} of $\mu$ by $T$. It is defined as $T_\#\mu (B) = \mu(T^{-1}(B))$ for all measurable set $B\subset Y$
and corresponds to the distribution of the ``mass" of $\mu$ after it has been displaced by the map $T$. It satisfies $\int_Y \varphi\, \d (T_\#\mu) = \int_X \varphi \circ T \, \d \mu$ whenever $\varphi: Y\to \R$ is a measurable function such that $\varphi \circ T$ is $\mu$-integrable~\cite[Prop. 2.6.8]{cohn1980measure}. In particular, with a projection map $\pi^i: (x_1,x_2,\dots)\mapsto x_i$, the pushforward $\pi^i_\# \mu$ is the \emph{marginal} of $\mu$ on the $i$-th factor.

\paragraph{Weak convergence and Bounded Lipschitz norm.} We say that a sequence of measures $\mu_n \in \M(\R^d)$ \emph{weakly} (or \emph{narrowly}) converges to $\mu$ if, for all continuous and bounded function $\varphi: \R^d \to \R$ it holds $\int \varphi \d \mu_n \to \int \varphi \d \mu$. For sequences which are bounded in total variation norm, this is equivalent to the convergence in Bounded Lipschitz norm. The latter is defined, for $\mu \in \M(\R^d)$, as 
\begin{equation}\label{eq:BL}
\Vert \mu \Vert_\BL := \sup \left\{  \int \varphi \, \d \mu\; ;\; \varphi: \R^d \to \R, \; \Lip(\varphi)\leq 1,\; \Vert \varphi \Vert_\infty \leq 1 \right\}
\end{equation}
where $\Lip(\varphi)$ is the smallest Lipschitz constant of $\varphi$ and $\Vert \cdot \Vert_\infty$ the supremum norm. 

\paragraph{Wasserstein metric.}  
The $p$-Wasserstein distance between two probability measures $\mu,\nu \in \P(\R^d)$ is defined as
\begin{equation*}
W_p(\mu,\nu):= \left( \min \int \vert y-x \vert^p d \gamma(x,y)\right)^{1/p}
\end{equation*}
where the minimization is over the set of probability measures $\gamma \in \P(\R^d \times \R^d)$ such that the marginal on the first factor $\R^d$ is $\mu$ and is $\nu$ on the second factor. The set of probability measures with finite second moments endowed with the metric $W_2$ is a complete metric space that we denote $\P_2(\R^d)$. A sequence $(\mu_m)_m$ converges in $\P_2(\R^d)$ iff for all continuous function $\varphi: \R^d\to \R$ with at most quadratic growth it holds $\int \varphi \d \mu_m \to \int \varphi \d \mu$~\cite[Prop. 7.1.5]{ambrosio2008gradient} (this is stronger than weak convergence). Using, respectively, the duality formula for $W_1$~\cite[Eq. (3.1)]{santambrogio2015optimal} and Jensen's inequality, it holds
\[
\Vert \mu-\nu\Vert_\BL \leq W_1(\mu,\nu) \leq W_2(\mu,\nu).
\]

Note that the functional of interest in this article is continuous for the Wasserstein metric. This strong regularity is rather rare in the study of Wasserstein gradient flows.
\begin{lemma}[Wasserstein continuity of $F$]\label{lem:Fcontinuous}
Under Assumptions~\ref{ass:regularity}, the function $F$ is continuous for the Wasserstein metric $W_2$.
\end{lemma}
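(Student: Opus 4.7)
Take $(\mu_n)_n\subset\P_2(\Omega)$ with $\mu_n\to\mu$ in $W_2$. I would first use Assumption~\ref{ass:regularity}-\eqref{subass:Qr} to extract an at-most-quadratic growth bound for both $\Phi$ and $V$. Fix any $u_0\in Q_{r_0}$. The nestedness property (a) implies that, for every $u\in\Omega$, the segment $[u_0,u]$ stays inside $Q_{r_0+|u-u_0|}$, where by (c) the bound $\|d\Phi\|\le C_1+C_2(r_0+|u-u_0|)$ holds. Integrating along this segment yields $\|\Phi(u)\|\le A_1+A_2|u|^2$, and an analogous argument using the subgradient bound for the semiconvex function $V$ gives $V(u)\le B_1+B_2|u|^2$. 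In particular the Bochner integral $\int\Phi\,\d\mu$ and the scalar integral $\int V\,\d\mu$ are well-defined on $\P_2(\Omega)$, and $W_2$-convergence guarantees uniform integrability of $1+|u|^2$ along $(\mu_n)$.

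The scalar term is handled directly: $V$ is continuous on $\Omega$ (being semiconvex and finite there) with at most quadratic growth, so by the characterization of $W_2$-convergence recalled just above, $\int V\,\d\mu_n\to\int V\,\d\mu$. The key step is to establish \emph{norm} convergence in $\F$ of $\int\Phi\,\d\mu_n$ to $\int\Phi\,\d\mu$. I would argue by a cutoff/tail decomposition. Let $\chi_r:\Omega\to[0,1]$ be a smooth function equal to $1$ on $Q_r$ and supported in $Q_{r+1}$; on $Q_{r+1}$, $\Phi$ is bounded and Lipschitz, so $\chi_r\Phi:\Omega\to\F$ is bounded Lipschitz with constants $\tilde B_r,\tilde L_r$. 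For any $f\in\F$ with $\|f\|\le 1$, the scalar function $\langle f,\chi_r\Phi(\cdot)\rangle$ has sup norm $\le \tilde B_r$ and Lipschitz constant $\le \tilde L_r$, whence by the definition of $\|\cdot\|_\BL$ in~\eqref{eq:BL} and the inequality $\|\mu_n-\mu\|_\BL\le W_2(\mu_n,\mu)$,
\begin{equation*}
\Bigl\|\tint\chi_r\Phi\,\d(\mu_n-\mu)\Bigr\|_\F=\sup_{\|f\|\le 1}\Bigl|\tint\langle f,\chi_r\Phi\rangle\,\d(\mu_n-\mu)\Bigr|\le\max(\tilde B_r,\tilde L_r)\,W_2(\mu_n,\mu)\xrightarrow[n\to\infty]{}0.
\end{equation*}
The tail is bounded by $\int(1-\chi_r)\|\Phi\|\,\d\nu\le\int_{\Omega\setminus Q_r}(A_1+A_2|u|^2)\,\d\nu$ for $\nu\in\{\mu,\mu_1,\mu_2,\dots\}$, and is made arbitrarily small for large $r$ \emph{uniformly in} $n$ by the uniform integrability of $|u|^2$. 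A standard $\varepsilon/3$ argument then gives $\int\Phi\,\d\mu_n\to\int\Phi\,\d\mu$ in $\F$-norm.

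To conclude, the norm-convergent sequence $(\int\Phi\,\d\mu_n)$ lies in a bounded subset of $\F$ on which $R$ is continuous (its differential being Lipschitz on bounded sets), so $R(\int\Phi\,\d\mu_n)\to R(\int\Phi\,\d\mu)$, and combining with the convergence of the $V$-term yields $F(\mu_n)\to F(\mu)$. The main obstacle is upgrading the weak pairing convergence of the vector-valued Bochner integrals $\int\Phi\,\d\mu_n$ to genuine norm convergence in $\F$; this is precisely what the cutoff-plus-tail decomposition accomplishes, by trading a bounded-Lipschitz estimate on $\chi_r\Phi$ against the quadratic growth of $\Phi$ tamed by the uniform integrability of $|u|^2$ coming from $W_2$.
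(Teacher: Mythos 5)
Your proof is correct, and its overall skeleton is the same as the paper's: derive at-most-quadratic growth of $\Vert\Phi\Vert$ and $V$ from Assumption~\ref{ass:regularity}-(iii), treat the $V$-term through the characterization of $W_2$-convergence, show strong convergence in $\F$ of the Bochner integrals $\int\Phi\,\d\mu_n$, and conclude by continuity of $R$. Where you genuinely diverge is the key step: the paper dispatches the strong convergence in one line via the Bochner estimate $\Vert\int\Phi\,\d\mu_n-\int\Phi\,\d\mu\Vert\leq\int\Vert\Phi\Vert\,\d(\mu_n-\mu)$, which (read with the variation $\vert\mu_n-\mu\vert$) does not by itself tend to zero under mere $W_2$-convergence, so the paper essentially asserts the conclusion; you instead prove it, by Hilbert-space duality against unit vectors $f\in\F$, a bounded-Lipschitz cutoff $\chi_r\Phi$ paired with $\Vert\mu_n-\mu\Vert_\BL\leq W_2(\mu_n,\mu)$ as in~\eqref{eq:BL}, and uniform integrability of $\vert u\vert^2$ for the tails. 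This buys a fully rigorous argument at the cost of length; the paper's version buys brevity at the cost of precision. Two small points to tighten: (i) for the tail bound to be uniform you should note that $\Omega\setminus Q_r$ is eventually outside any fixed ball, which follows from the nestedness property (a) (for $u_0\in Q_{r_0}$ one has $B(u_0,R)\cap\Omega\subset Q_{r_0+R}$), the same containment you already use for the growth bound; (ii) the cutoff $\chi_r$ need only be Lipschitz (e.g.\ $\chi_r(u)=\max\{0,1-\dist(u,Q_r)\}$), smoothness is neither needed nor obviously available, and the continuity of $V$ is best justified from the bounded subgradients on each convex $Q_r$ (making $V$ locally Lipschitz) rather than from semiconvexity alone, which at boundary points of $\Omega$ is not quite enough.
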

\begin{proof}
Let $(\mu_m)_m,\mu \in \P_2(\Omega)$ be such that $W_2(\mu_m,\mu)\to 0$. By Assumption~\ref{ass:regularity}-(iii)-(c), $\Vert \Phi\Vert$ and $\vert V\vert$ have at most quadratic growth. It follows  $\int V \d \mu_m \to \int V \d \mu$ and since, by the properties of Bochner integrals~\cite[Prop. E.5]{cohn1980measure}, it holds
$
\Vert \int \Phi \d \mu_m - \int \Phi \d \mu \Vert \leq \int \Vert \Phi \Vert\d (\mu_m-\mu),
$
we also have $\int \Phi \d \mu_m \to \int \Phi \d \mu$ strongly in $\F$.
As $R$ is continuous in the strong topology of $\F$, it follows $F(\mu_m)\to F(\mu)$.
\end{proof}

\subsection{Lifting to the space of probability measures}\label{app:lifting}
Let us give technical details about the lifting introduced in Section~\ref{subsec:lifting} that allows to pass from a problem on the space of signed measures on $\Theta \subset \R^{d-1}$ (the minimization of $J$ defined in~\eqref{eq:introproblem}) to an equivalent problem on the space of probability measures on a bigger space $\Omega \subset \R^{d}$ (the minimization of $F$ defined in~\eqref{eq:mainproblem}). 

\paragraph{Homogeneity.} 
We recall that a function $f$ from $\R^d$ to a vector space is said \emph{positively $p$-homogeneous}, with $p\geq 0$ if for all $u\in \R^d$ and $\lambda>0$ it holds $f(\lambda u)=\lambda^p f(u)$. We often use without explicit mention the properties related to homogeneity such as the fact that the (sub)-derivative of a positively $p$-homogeneous function is positively $(p-1)$-homogeneous and, for $f$ differentiable (except possibly at $0$), the identity $u\cdot \nabla f(u) = p f(u)$ for $u \neq 0$.

\subsubsection{The partially $1$-homogeneous case}
We take $\Omega := \R \times \Theta$, $\Phi(w,\theta)= w \cdot \phi(\theta)$ and $V(w,\theta)= \vert w\vert\tilde V(\theta)$ for some continuous functions $\phi:\Theta\to \F$ and $\tilde V: \Theta\to \R_+$. This setting covers the lifted problems mentioned in Section~\ref{subsec:lifting}. We first show that $F$ can be indifferently minimized over  $\M_+(\Omega)$ or over $\P(\Omega)$, thanks to the homogeneity of $\Phi$ and $V$ in the variable $w$.
\begin{proposition}
For all $\mu \in \M_+(\Omega)$, there is $\nu \in \P(\Omega)$ such that $F(\mu)=F(\nu)$.
\end{proposition}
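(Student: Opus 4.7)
The strategy is to exploit positive $1$-homogeneity of $\Phi$ and $V$ in the variable $w$: dilating the $w$-coordinate of a measure by a factor $\lambda>0$ multiplies $\int \Phi\,\d\mu$ and $\int V\,\d\mu$ by $\lambda$, while multiplying the measure itself by a scalar $c>0$ scales the same integrals by $c$. Choosing $c$ so that the total mass becomes $1$ and then $\lambda$ so that the two scalings cancel each other should produce a probability measure with the same $\int \Phi$, the same $\int V$, and hence the same value of $F$.

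Concretely, assume first that $m := \mu(\Omega) > 0$. Define the dilation $T_m(w,\theta) := (mw,\theta)$ and set $\nu := (1/m)\, T_{m\#}\mu$. Then $\nu(\Omega) = (1/m)\mu(\Omega) = 1$, so $\nu \in \P(\Omega)$. Applying the pushforward formula and $1$-homogeneity of $\Phi$ in $w$,
\[
\int \Phi \,\d\nu \;=\; \frac{1}{m} \int \Phi(mw,\theta)\,\d\mu(w,\theta) \;=\; \int \Phi\,\d\mu,
\]
and the identical computation with $V$ uses $|mw|\tilde V(\theta) = m\,|w|\tilde V(\theta)$. Substituting into the definition of $F$ gives $F(\nu) = F(\mu)$.

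The only case not covered is $\mu = 0$, where the above construction breaks down because $m=0$. Here $F(\mu) = R(0)$, and one can simply take $\nu := \delta_{(0,\theta_0)}$ for any fixed $\theta_0 \in \Theta$: by $1$-homogeneity both $\Phi(0,\theta_0)$ and $V(0,\theta_0)$ vanish, so $F(\nu) = R(0) = F(\mu)$. No genuine obstacle arises in this proof; the whole content is the observation that mass-scaling and $w$-dilation act linearly on the two integrals but through different mechanisms, and that the choices $\lambda = m$ and $c = 1/m$ compensate them exactly. What is structurally essential is that $\Phi$ and $V$ share the \emph{same} homogeneity exponent in $w$ (both equal to $1$); it is this feature of Assumption~\ref{ass:bounded} that makes the lifting to $\P(\Omega)$ work at all.
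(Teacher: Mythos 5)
Your argument is correct and is essentially the paper's own proof: the measure $\nu = (1/m)\,T_{m\#}\mu$ is exactly the paper's $T_\#(\mu/\vert\mu\vert(\Omega))$ with $T(w,\theta)=(\mu(\Omega)w,\theta)$, and the degenerate case $\mu=0$ is handled by the same Dirac $\delta_{(0,\theta_0)}$ (you are in fact slightly more careful, since the common value is $R(0)$, not necessarily $0$).
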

\begin{proof}
If $\vert\mu\vert(\Omega) = 0$ then $F(\mu)=0=F(\delta_{(0,\theta_0})$ where $\theta_0$ is any point in $\Theta$. Otherwise, we define the map $T:(w,\theta)\mapsto (\vert\mu\vert(\Omega)\cdot w,\theta)$ and the probability measure $\nu := T_\#(\mu/\vert\mu\vert(\Omega)) \in \P(\Omega)$, which satisfies $F(\nu)=F(\mu)$.
\end{proof}
We now introduce a projection operator $ h^1: \M_+(\Omega)\to \M(\Theta)$ that is adapted to the partial homogeneity of $\Phi$ and $V$. It is defined by $h^1(\mu)(B) = \int_\R w  \mu(d w,B)$ for all $\mu \in \P(\Omega)$ and measurable set $B\subset \Theta$ or, equivalently, by the property that for all continuous and bounded test function $\varphi : \Theta \to \R$,
\[
\int_\Theta \varphi(\theta) \d h^1(\mu)(\theta) = \int_{\R\times \Theta} w \varphi(\theta) d \mu(w,\theta).
\]
This operator is well defined whenever $(w,\theta)\mapsto w$ is $\mu$-integrable.
\begin{proposition}[Equivalence under lifting]\label{prop:liftsigned}
It holds $\M(\Theta)\subset h^1(\P(\Omega))= h^1(\M_+(\Omega)) $. For a regularizer $G$ on $\M(\Theta)$ of the form $G(\mu) = \inf_{\nu \in h^{-1}(\mu)} \int_{\Omega} Vd \nu$, it holds
$\inf_{\nu \in \M(\Theta)} J(\nu) =  \inf_{\mu \in \M_+(\Omega)}  F(\mu)$. If the infimum defining $G$ is attained and if $\nu\in \M(\Theta)$ minimizes $J$, then there exists $\mu \in h^{-1}(\nu)$ that minimizes $F$ over $\M_+(\Omega)$.
\end{proposition}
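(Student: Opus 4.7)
The plan rests on the elementary identity $\int \Phi \d\mu = \int \phi \d h^1(\mu)$, which follows from $\Phi(w,\theta) = w\phi(\theta)$ and the definition of $h^1$ (applied to Bochner integrals with $\F$-valued integrand) for any $\mu \in \M_+(\Omega)$ in which $w$ is $\mu$-integrable. This identity freezes the data term of $F$ on every fiber $h^{-1}(\nu)$, so that minimizing $F$ over $h^{-1}(\nu)$ reduces to minimizing $\int V \d\mu$, which is by construction $G(\nu)$. Hence $\inf_{\mu \in h^{-1}(\nu)} F(\mu) = R\bigl(\int \phi \d\nu\bigr) + G(\nu) = J(\nu)$, and a swap of infima will yield $\inf F = \inf J$ as soon as $h^1 : \P(\Omega) \to \M(\Theta)$ is shown to be surjective.

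Surjectivity is the first concrete step. Given $\nu \in \M(\Theta)$, I would Jordan-decompose $\nu = \nu_+ - \nu_-$ with totals $a,b \geq 0$. When $a,b > 0$, the probability measure
\[
\mu = \tfrac{1}{2}(i_{2a})_\#(\nu_+/a) + \tfrac{1}{2}(i_{-2b})_\#(\nu_-/b), \qquad i_w(\theta) := (w,\theta),
\]
satisfies $h^1(\mu) = \nu$ by a direct test-function computation. The degenerate cases $a = 0$ or $b = 0$ (including $\nu = 0$) are handled by replacing the vanishing piece with a Dirac $\delta_{(0,\theta_0)}$ for any fixed $\theta_0 \in \Theta$, which contributes nothing to $h^1$. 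This establishes $\M(\Theta) \subset h^1(\P(\Omega))$. The inclusion $h^1(\P(\Omega)) \subset h^1(\M_+(\Omega))$ is immediate; for the converse, I would normalize an arbitrary nonzero $\mu \in \M_+(\Omega)$ via the homogeneity-preserving rescaling $\mu \mapsto c^{-1}(T_c)_\#\mu$ with $T_c(w,\theta) = (cw,\theta)$ and $c = \mu(\Omega) > 0$. A change of variables shows that this produces a probability measure on $\Omega$ while leaving $h^1(\mu)$ invariant, by the $1$-homogeneity of $w \mapsto w$ built into the definition of $h^1$.

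Combining the above gives
\[
\inf_{\mu \in \M_+(\Omega)} F(\mu) = \inf_{\nu \in \M(\Theta)} \inf_{\mu \in h^{-1}(\nu)} F(\mu) = \inf_{\nu \in \M(\Theta)} J(\nu),
\]
which is the middle assertion. For the attainment claim, if $\nu^*$ minimizes $J$ and $\mu^* \in h^{-1}(\nu^*)$ realizes the infimum defining $G(\nu^*)$, then $F(\mu^*) = R\bigl(\int \phi \d \nu^*\bigr) + G(\nu^*) = J(\nu^*) = \inf J = \inf F$, so $\mu^*$ minimizes $F$ over $\M_+(\Omega)$. The only real obstacle I foresee is the bookkeeping in the surjectivity step: correct handling of the mass-zero edge cases and verification that $h^1$ is well-defined and invariant under $\mu \mapsto c^{-1}(T_c)_\#\mu$ in the Bochner sense. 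Everything else is a clean consequence of the $1$-homogeneity of $\Phi$ and $V$ in $w$ combined with a swap of infima.
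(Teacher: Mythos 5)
Your proof is correct and follows essentially the same route as the paper: an explicit lift establishes $\M(\Theta)\subset h^1(\P(\Omega))$, the identity $\int \Phi\,\d\mu = \int \phi\,\d h^1(\mu)$ makes the loss term constant on each fiber so that $\inf_{\mu\in h^{-1}(\nu)}F(\mu)=J(\nu)$, and the attained infimum defining $G$ produces the minimizer of $F$. The only cosmetic differences are in the preimage construction (your two-level Jordan-decomposition lift versus the paper's graph lift $(f\times\id)_{\#}\sigma$ of the normalized variation) and in the fact that your rescaling argument for $h^1(\P(\Omega))=h^1(\M_+(\Omega))$ is exactly the normalization trick the paper uses in the preceding proposition.
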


\begin{proof}
A signed measure $\nu\in \M(\Theta)$ can be expressed as $\nu = f \sigma$ where $\sigma \in \P(\Theta)$ and $f: \Theta \to \R \in L^1(\sigma)$ (take for instance $\sigma$ the normalized variation of $\mu$ if $\vert\mu\vert(\Theta)>0$). The measure 
\begin{equation}\label{eq:h1preimage}
\mu := ( f\times \id)_\# \sigma
\end{equation}
belongs to $\P(\Omega)$ and satisfies $h^1(\nu)= \mu$. This proves that $h^1(\P(\Omega))$ is surjective. It is clear by the definition of $h^1$ that for all $\mu \in h^{-1}(\nu)$, it holds $\int \Phi\, \d \mu = \int \phi \, \d \nu$ hence $F(\mu)\geq J(\nu)$, with equality when $\mu$ is  the minimizer in the definition of $G$.
\end{proof}

The class of regularizer considered in Proposition~\ref{prop:liftsigned} includes the total variation norm.
\begin{proposition}[Total variation]
Let $V(w,\theta)=\vert w \vert$.
For $\mu \in \M(\Theta)$, it holds $\int V d\mu \geq \vert h^1(\mu)\vert(\Theta)$ with equality if, for instance, $\mu$ is a lift of $h^1(\mu)$ of the form~\eqref{eq:h1preimage}.
\end{proposition}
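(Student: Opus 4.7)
The plan is to treat the inequality and the equality case separately, using the dual characterization of the total variation norm.

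First, I would prove the inequality $\int V \, d\mu \geq |h^1(\mu)|(\Theta)$ via duality. Recall that for a signed measure $\nu\in\M(\Theta)$ one has $|\nu|(\Theta) = \sup\{\int \varphi\, d\nu \,:\, \varphi \in C_b(\Theta),\ \|\varphi\|_\infty \le 1\}$. Taking any such test function $\varphi$, the definition of $h^1$ gives
\[
\int_\Theta \varphi \, d h^1(\mu) \;=\; \int_{\R\times\Theta} w\,\varphi(\theta)\, d\mu(w,\theta) \;\le\; \int_{\R\times\Theta} |w|\,|\varphi(\theta)|\, d\mu(w,\theta) \;\le\; \int |w|\, d\mu \;=\; \int V\, d\mu,
\]
where in the last inequality I use that $\|\varphi\|_\infty\le 1$ and $\mu \ge 0$. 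Taking the supremum over admissible $\varphi$ yields the stated inequality. (A small care-point: strictly speaking one should ensure $(w,\theta)\mapsto w$ is $\mu$-integrable so that $h^1(\mu)$ is defined; but if not, then $\int V\, d\mu = +\infty$ and there is nothing to show.)

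Second, I would verify the equality case by direct computation. Suppose $\nu = h^1(\mu) \in \M(\Theta)$ is written as $\nu = f\sigma$ with $\sigma\in\P(\Theta)$ and $f\in L^1(\sigma)$, and that $\mu = (f\times \id)_\#\sigma$ (this is the lift described in equation~\eqref{eq:h1preimage}). By the change-of-variables formula for pushforwards,
\[
\int V\, d\mu \;=\; \int_{\R\times\Theta} |w|\, d\mu(w,\theta) \;=\; \int_\Theta |f(\theta)|\, d\sigma(\theta).
\]
Since $\sigma$ is a nonnegative measure, the right-hand side is exactly the total variation $|f\sigma|(\Theta) = |\nu|(\Theta) = |h^1(\mu)|(\Theta)$, giving equality.

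No serious obstacle is anticipated: the only subtlety is the conventional one of checking that $h^1(\mu)$ is well-defined whenever the inequality is nontrivial, and that the Jordan decomposition of $f\sigma$ has the expected variation $|f|\sigma$ when $\sigma\ge 0$. Both are standard facts from measure theory.
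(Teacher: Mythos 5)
Your proof is correct, but it follows a genuinely different route from the paper's. For the inequality, the paper splits the lifted measure according to the sign of $w$, writing $h^1(\mu)=\tilde\nu_+-\tilde\nu_-$ with $\tilde\nu_\pm$ obtained by integrating $\pm w$ over $\R_\pm$, and then invokes the minimality of the Jordan decomposition to get $\vert h^1(\mu)\vert(\Theta)\leq \tilde\nu_+(\Theta)+\tilde\nu_-(\Theta)=\int V\,\d\mu$; you instead use the dual characterization of the total variation norm, tested against $\varphi$ with $\Vert\varphi\Vert_\infty\leq 1$ through the defining identity of $h^1$, which gives the bound in one line (and is legitimate here since finite Borel measures on $\Theta\subset\R^{d-1}$ are Radon, so the supremum over continuous bounded test functions does recover $\vert\nu\vert(\Theta)$). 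For the equality case, the paper establishes a more general criterion---equality holds whenever $\tilde\nu_+$ and $\tilde\nu_-$ are mutually singular, i.e.\ their supports intersect in a $\vert\nu\vert$-null set---and then observes that the lift~\eqref{eq:h1preimage} satisfies it, whereas you verify equality for that lift by a direct pushforward computation, $\int V\,\d\mu=\int\vert f\vert\,\d\sigma=\vert f\sigma\vert(\Theta)$. Your route is shorter and self-contained; the paper's buys a sharper description of exactly when equality occurs beyond the specific lift, though only the lift case is needed for the statement. The care-points you flag (integrability of $w$ so that $h^1(\mu)$ is defined, and $\vert f\sigma\vert=\vert f\vert\sigma$ for $\sigma\geq 0$) are indeed standard; note also that the proposition's ``$\mu\in\M(\Theta)$'' should be read as a nonnegative measure on $\Omega=\R\times\Theta$, which is what you implicitly assume when bounding $\int w\varphi\,\d\mu$ by $\int\vert w\vert\,\d\mu$.
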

\begin{proof}
Let $\mu\in \P(\Omega)$ and $\nu = h^1(\mu)$. We define $\tilde \nu_+ := \int_{\R_+} w\,\mu(w,\cdot)$ and $\tilde \nu_- := -\int_{\R_-} w\,\mu(w,\cdot)$. Clearly, $\nu = \tilde \nu_+ - \tilde \nu_-$ and by the definition of the total variation of a signed measure, $\vert \nu\vert(\Theta) = \vert \nu_+\vert(\Theta)+\vert \nu_-\vert(\Theta)  \leq  \vert \tilde \nu_+\vert(\Theta)+\vert \tilde \nu_-\vert(\Theta)  = \int V \d \mu$. There is equality whenever $\spt \tilde \nu_+\cap \spt \tilde \nu_-$ has $\vert \nu\vert$-measure $0$ (see~\cite[Cor. 4.1.6]{cohn1980measure}), a condition which is satisfied by the lift in~\eqref{eq:h1preimage}.
\end{proof}

\subsubsection{The $2$-homogeneous case}
Another structure that is studied in this paper is when $\Phi$ and $V$ are defined on $\R^d$ and are positively $2$-homogeneous. In this case, the role played by $\Theta$ is the previous section is played by the unit sphere $\S^{d-1}$ of $\R^d$. We could again make links between $F$ (defined as in Eq.~\eqref{eq:mainproblem}) and a functional on nonnegative measures on the sphere (playing the role of $J$) but here we will limit ourselves to defining the projection operator relevant in this setting. It is $h^2 : \M_+(\R^d)\to \M_+(\S^{d-1})$ characterized by the relationship, for all continuous and bounded function $\varphi : \S^{d-1} \to \R$ (with the convention $\phi(0/0)=0$):
\[
\int_{\S^{d-1}} \varphi(\theta) \d h^2(\mu)(\theta) = \int_{\R^d} \vert u\vert^2 \varphi(u/\vert u\vert) d \mu(u).
\]
This operator is well-defined iff $\mu$ has finite second order moments.


\section{Many-particle limit and Wasserstein gradient flow}\label{app:WGF}

\subsection{Proof of Proposition~\ref{eq:classicalGF}}
As the sum of a continuously differentiable and a semiconvex function, $F_m$ is locally semiconvex and the existence of a unique gradient flow on a maximal interval $[0,T[$ with the claimed properties is standard, see~\cite[Sec. 2.1]{santambrogio2017euclidean}.  Now, a general property of gradient flows is that for a.e $t\in \R_+, u \in \Omega$, the derivative is (minus) the subgradient of minimal norm. This leads to the explicit formula involving the velocity field with pointwise minimal norm:
\begin{align*}
v_t(u) &= \arg\min \left\{ \vert v \vert^2 \;;\; \tilde v_t(u)  - v \in \partial V(u) \right\}\\
&= \tilde v_t(u) - \arg \min \left\{ \vert \tilde v_t(u) -z\vert^2 \;;\; z \in \partial V(u)\right\} \\
&= (\mathrm{id} -\mathrm{proj}_{\partial V(u)})(\tilde v_t(u)).
\end{align*}

 In the specific case of gradient flows of lower bounded functions, we can derive estimates that imply that $T=\infty$ (even if $F_m$ is not globally semiconvex). Indeed, for all $t>0$, it holds
\begin{align*}
F_m(\mathbf u(0))-F_m(\mathbf u(t)) &= -\int_0^t \frac{d}{ds}F_m(\mathbf u(s))\d s 
= \frac1m \int_0^t \vert \mathbf u'(s)\vert^2\d s \geq \frac{t}{m}\left( \int_0^t \vert\mathbf u'(s)\vert\d s\right)^2
\end{align*}
by Jensen's inequality. Since $F_m$ is lower bounded, this proves that the gradient flow has bounded length on bounded time intervals. By compactness, if $T$ was finite then $\mathbf u(T)$ would exist, thus contradicting the maximality of $T$, hence $T=\infty$ and the gradient flow is globally defined.

\subsection{Link between classical and Wasserstein gradient flows}
We first give a rigorous definition of the continuity equation which appear in the definition of Wasserstein gradient flows (Definition~\ref{def:WassersteinGF}).
\paragraph{Continuity equation.}
Considerations from fluid mechanics suggest that if a time dependent distribution of mass $(\mu_t)_t$ is displaced under the action of a velocity field $(v_t)_t$, then the \emph{continuity equation} is satisfied: $\partial_t \mu_t = - \mathrm{div} (v_t\mu_t)$. 
For distributions which do not have a smooth density, this equation should be understood distributionally, which means that for all smooth test functions $\varphi: {]0,\infty[\times \R^d}$ with compact support, it holds
\[
\int_0^\infty \int_{\R^d} \left(\partial_t \varphi_t(u) +\nabla_u \varphi_t(u)\cdot v_t(u) \right)\d \mu_t(u)\d t = 0.
\]
The integrability condition $\int_0^{t_0}\int_{\R^d}\vert v_t(u)\vert\d\mu(u)\d t<\infty$ for all $t_0<T$ should also hold.

As we show now, there is a precise link between classical and Wasserstein gradient flow (Definitions~\ref{def:classicalGF} and~\ref{def:WassersteinGF}). This is a simple result but might be instructive for readers who are not familiar with the concept of distributional solutions of partial differential equations.
\begin{proposition}[Atomic solutions of Wasserstein gradient flow]\label{prop:atomicWGF}
If $\mathbf u:\R_+\to \Omega^m$ is a classical gradient flow for $F_m$ in the sense of Definition~\ref{def:classicalGF}, then $t\mapsto \mu_{m,t} := \frac1m \sum_{i=1}^m \delta_{\mathbf u_i(t)}$ is a Wasserstein gradient flow of $F$ in the sense of Definition~\ref{def:WassersteinGF}.
\end{proposition}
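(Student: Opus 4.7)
The plan is to verify the two defining conditions of a Wasserstein gradient flow for the atomic curve $t\mapsto \mu_{m,t}$: the continuity equation, and the subdifferential inclusion $v_t\in-\partial F'(\mu_{m,t})$. For the velocity field I will use exactly the one furnished by Proposition~\ref{eq:classicalGF}, namely $v_t(u)=\tilde v_t(u)-\proj_{\partial V(u)}(\tilde v_t(u))$ with $\tilde v_t(u)=-[\langle R'(\int\Phi\,\d\mu_{m,t}),\partial_j\Phi(u)\rangle]_j$, interpreted as a Borel vector field on $\Omega$.

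First I would establish the continuity equation. Fix a test function $\varphi\in C^\infty_c(\mathopen]0,\infty\mathclose[\times\Omega)$ and consider $g(t):=\int\varphi_t\,\d\mu_{m,t}=\frac1m\sum_{i=1}^m\varphi_t(\mathbf u_i(t))$. Since each trajectory $\mathbf u_i$ is absolutely continuous by Definition~\ref{def:classicalGF}, $g$ is absolutely continuous and the chain rule gives, for a.e.\ $t$,
\[
g'(t)=\frac1m\sum_{i=1}^m\bigl(\partial_t\varphi_t(\mathbf u_i(t))+\nabla_u\varphi_t(\mathbf u_i(t))\cdot\mathbf u_i'(t)\bigr).
\]
Proposition~\ref{eq:classicalGF} identifies $\mathbf u_i'(t)=v_t(\mathbf u_i(t))$, so the second sum equals $\int\nabla_u\varphi_t\cdot v_t\,\d\mu_{m,t}$. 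Integrating in $t$ and using that $\varphi$ has compact support to kill the boundary terms then yields
\[
\int_0^\infty\int_\Omega\bigl(\partial_t\varphi_t(u)+\nabla_u\varphi_t(u)\cdot v_t(u)\bigr)\,\d\mu_{m,t}(u)\,\d t=0,
\]
which is the distributional continuity equation. The integrability condition $\int_0^{t_0}\int|v_t|\,\d\mu_{m,t}\,\d t<\infty$ follows from the bound on the length of the gradient flow derived in the proof of Proposition~\ref{eq:classicalGF}.

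Second I would check the subdifferential inclusion. By definition, $F'(\mu_{m,t})(u)=\langle R'(\int\Phi\,\d\mu_{m,t}),\Phi(u)\rangle+V(u)$; the first summand is continuously differentiable with Euclidean gradient $-\tilde v_t(u)$ (by the chain rule and the definition of $\partial_j\Phi$), while the second has Clarke/limiting subdifferential $\partial V(u)$. Hence the pointwise (finite-dimensional) subdifferential satisfies $\partial F'(\mu_{m,t})(u)=-\tilde v_t(u)+\partial V(u)$, and its element of minimal norm is exactly $-v_t(u)=\proj_{\partial V(u)}(\tilde v_t(u))-\tilde v_t(u)$ by the very definition of the projection onto the closed convex set $\partial V(u)$. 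This is the canonical Wasserstein subgradient selection (cf.\ Appendix~\ref{app:WGFproperties}), so $v_t\in-\partial F'(\mu_{m,t})$.

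The step I expect to require the most care is the distributional computation, specifically justifying the exchange of the time integral with the atomic sum: the key point is that each $\mathbf u_i$ is merely absolutely continuous, not $C^1$, so one must invoke the absolute-continuity chain rule (or the Leibniz rule for $\int\varphi_t\,\d\mu_{m,t}$) rather than a classical derivative. Everything else is essentially an algebraic identification of $v_t$ with the minimal-selection Wasserstein subgradient of $F$ and an appeal to the already-proved Proposition~\ref{eq:classicalGF}.
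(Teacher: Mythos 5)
Your proposal is correct and follows essentially the same route as the paper: the core step is the same distributional verification of the continuity equation via the chain rule along each absolutely continuous particle trajectory, with the velocity field taken from Proposition~\ref{eq:classicalGF}. The subdifferential inclusion you spell out is exactly the identification $v_t\in-\partial F'(\mu_{m,t})$ that the paper records in the discussion preceding Definition~\ref{def:WassersteinGF}, so your extra detail there (and on integrability) is a welcome but not divergent elaboration.
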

\begin{proof}
Let us call $v_t$ the velocity vector field defined in~\eqref{eq:velocity}. In it easy to see that $t\mapsto \mu_{m,t}$ is absolutely continuous for $W_2$ and for any smooth function with compact support $\varphi : {]0,\infty[}\times \R^d\to \R$, we have
\begin{align*}
0 &= \frac1m \sum_{i=1}^m \int_{\R_+} \frac{d}{dt} \varphi_t(\mathbf u_i(t))\,\d t \\
&=\frac1m \sum_{i=1}^m  \int_{\R_+} \left( \partial_t \varphi_t(\mathbf u_i(t)) +\nabla_u \varphi_t(\mathbf u_i(t)) \cdot v_t(\mathbf u_i) \right)\d t\\ 
&=
\int_{\R_+}\int_{\Omega} \left( \partial_t \varphi_t(u) + \nabla_u \varphi_t(u)\cdot v_t(u)\right) \d \mu_{m,t}\d t 
\end{align*}
which precisely means that $(\mu_{m,t})_{t\geq 0}$ is a distributional solution to~\eqref{eq:gradientflow}.
\end{proof}
Note that $(\mu_{m,t})_t$ has the same number of atoms throughout the dynamic. In particular, if no minimizer of $F$ is an atomic measure with at most $m$ atoms, then $(\mu_{m,t})_t$ is guaranteed to \emph{not} converge to a minimizer.

\subsubsection{Properties of the Wasserstein gradient flow (proof of Proposition~\ref{prop:uniquenessWGF})}\label{app:WGFproperties}
In this section, we use the general theory of Wasserstein gradient flows developed in~\cite{ambrosio2008gradient} to prove existence and uniqueness of Wasserstein gradient flows as claimed in Proposition~\ref{prop:uniquenessWGF}, under Assumptions~\ref{ass:regularity}.  The ``existence'' part of the proof is in fact redundant with Theorem~\ref{th:manyparticle} which provides with another constructive proof. We recall that $F'(\mu): \Omega \to \R$ is defined as
\[
F'(\mu)(u) = \langle R'(\tint \Phi \d\mu), \Phi(u) \rangle + V(u)
\]
and that the field of subgradients of minimal norm of $F'(\mu)$ has an explicit formula given in~\eqref{eq:velocity}.
Our strategy is to use, as an intermediary step, the Wasserstein gradient flows for the family of functionals $F^{(r)}:\P_2(\Omega)\to \R$ defined, for $r>0$, as
\[
F^{(r)}(\mu) = 
\begin{cases}
F(\mu)&\text{if $\mu(Q_r)=1$},\\
\infty &\text{otherwise}
\end{cases}
\]
where $(Q_r)_{r>0}$ is the nested family of subsets of $\R^d$ that appear in Assumptions~\ref{ass:regularity}.
These ``localized'' functionals have nice properties in the Wasserstein geometry, as shown in Lemma~\ref{lem:wassersteinpropertiesJr}. For $r>0$, we say that $\gamma\in \P(\Omega\times \Omega)$ is an \emph{admissible transport plan} if both its marginals are concentrated on $Q_r$ and have finite second moments. The transport cost associated to $\gamma$ is denoted $C_p(\gamma) := \left( \int \vert y-x\vert^p\d\gamma(x,y)\right)^{1/p}$ for $p\geq 1$, and we introduce the quantities
\begin{align*}
\Vert d\Phi\Vert_{\infty,r} &= \sup_{u \in Q_r} \Vert d\Phi_u\Vert & L_{d\Phi} &= \sup_{\substack{u,\tilde u \in Q_r\\u\neq \tilde u}}\frac{\Vert d\Phi_{\tilde u}-d\Phi_u\Vert}{\vert \tilde u-u\vert} \\
\Vert dR\Vert_{\infty,r} &= \sup_{f\in \F_r}\Vert dR_f\Vert & L_{dR} &=\sup_{\substack{f,g\in \F_r\\ f\neq g}} \frac{\Vert dR_f-dR_g \Vert}{\Vert f-g\Vert} 
\end{align*}
where $\F_r \coloneqq \{ \tint \Phi \d\mu\;;\; \mu \in \P(\Omega),\, \mu(Q_r)=1\}$ is bounded in $\F$. Those quantities are finite for all $r>0$ under Assumptions~\ref{ass:regularity}.  For the sake of clarity, we set $V=0$ in the next lemma, and focus on the \emph{loss} term, which is a new object of study. The term involving $V$, well-studied in the theory of Wasserstein gradient flows (see~\cite[Prop. 10.4.2]{ambrosio2008gradient}), is incorporated later.

\begin{lemma}[Properties of $F^{(r)}$ in Wasserstein geometry]\label{lem:wassersteinpropertiesJr}
Under Assumptions~\ref{ass:regularity}, suppose that $V=0$. For all $r>0$,  $F^{(r)}$ is proper and continuous for $W_2$ on its closed domain. Moreover, 
\begin{enumerate}[(i)]
\item there exists $\lambda_r>0$ such that for all admissible transport plan $\gamma$, considering the transport interpolation $\mu^\gamma_t\coloneqq ((1-t)\pi^1 +t\pi^2)_\# \gamma$, the function $t\mapsto F(\mu^\gamma_t)$ is differentiable with a $\lambda_r C_2^2(\gamma)$-Lipschitz derivative;
\item for $\mu$ concentrated on $Q_r$, a velocity field $v\in L^2(\mu,\R^d)$ satisfies, for any admissible transport plan $\gamma$ with first marginal $\mu$, 
\[
F(\pi^2_\#\gamma) \geq F(\mu) + \int v(u)\cdot (\tilde u - u)\d\gamma(u,\tilde u) + o(C_2(\gamma))
\] 
if and only if $v(u) \in \partial (F'(\mu)+ \iota_{Q_r})(u)$ for $\mu$-almost every $u\in \Omega$, where $\iota_{Q_r}$ is the convex function on $\Omega$ that is worth $0$ on $Q_r$ and $\infty$ outside.
\end{enumerate}
\end{lemma}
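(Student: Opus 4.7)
The plan is to establish the routine topological claims first, then prove (i) by a one-variable chain-rule computation along the transport interpolation, and finally deduce (ii) by combining the expansion from (i) with a normal-cone analysis on $Q_r$. The effective domain $\{\mu\in\P_2(\Omega)\,:\,\mu(Q_r)=1\}$ is nonempty (any Dirac on $Q_r$ is in it) and closed for $W_2$ (since $Q_r$ is closed and $W_2$-convergence implies weak convergence), while $W_2$-continuity of $F$ on this domain follows from Lemma~\ref{lem:Fcontinuous}; propriety is immediate from $F\ge 0$.

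For (i), I would introduce $f_\gamma(t):=\int \Phi((1-t)x+ty)\,\d\gamma(x,y)$, a well-defined Bochner integral in $\F$ thanks to the convexity of $Q_r$ and the boundedness of $\Phi$ on it. Local Lipschitzness of $d\Phi$ combined with dominated convergence yields Fr\'echet differentiability of $f_\gamma$ with $f_\gamma'(t)=\int d\Phi_{(1-t)x+ty}(y-x)\,\d\gamma$, so composing with $R$ gives $\tfrac{d}{dt}F(\mu^\gamma_t)=\langle R'(f_\gamma(t)),f_\gamma'(t)\rangle$. To bound the Lipschitz constant in $t$, I split $\langle R'(f_\gamma(t_1)),f_\gamma'(t_1)\rangle-\langle R'(f_\gamma(t_2)),f_\gamma'(t_2)\rangle$ into the usual two pieces: the first is controlled by $L_{dR}\Vert f_\gamma(t_1)-f_\gamma(t_2)\Vert\cdot\Vert d\Phi\Vert_{\infty,r}C_1(\gamma)\le L_{dR}\Vert d\Phi\Vert_{\infty,r}^2|t_1-t_2|C_1^2(\gamma)$, and the second by $\Vert dR\Vert_{\infty,r}L_{d\Phi}|t_1-t_2|C_2^2(\gamma)$ using the Lipschitz property of $d\Phi$ pointwise. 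Jensen's inequality $C_1^2(\gamma)\le C_2^2(\gamma)$ collects everything into $\lambda_r C_2^2(\gamma)|t_1-t_2|$ with $\lambda_r:=L_{dR}\Vert d\Phi\Vert_{\infty,r}^2+\Vert dR\Vert_{\infty,r}L_{d\Phi}$.

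For (ii), the \emph{if} direction follows from (i) by Taylor expansion: having a $\lambda_r C_2^2(\gamma)$-Lipschitz derivative on $[0,1]$ yields $F(\pi^2_\#\gamma)=F(\mu)+(F\circ\mu^\gamma)'(0)+O(C_2^2(\gamma))$, and a direct calculation gives $(F\circ\mu^\gamma)'(0)=\int \nabla F'(\mu)(u)\cdot(\tilde u-u)\,\d\gamma$ where $\nabla F'(\mu)(u)=(d\Phi_u)^* R'(\int\Phi\,\d\mu)$. If $v=\nabla F'(\mu)+n$ with $n(u)$ in the normal cone $N_{Q_r}(u)$ for $\mu$-a.e. $u$, then $n(u)\cdot(\tilde u-u)\le 0$ whenever $\tilde u\in Q_r$, and the desired variational inequality follows since $O(C_2^2)=o(C_2)$. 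For the converse, I would test the hypothesis against plans $\gamma_s:=(\id\times(\id+s\eta))_\#\mu$ with $\eta$ a bounded measurable vector field chosen so that $u+s\eta(u)\in Q_r$ for small $s$ and $\mu$-a.e. $u$: dividing the inequality by $s$ and letting $s\downarrow 0$, the expansion from (i) yields $\int(\nabla F'(\mu)-v)\cdot\eta\,\d\mu\ge 0$ for every such $\eta$.

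The main obstacle is the last step: to promote this family of integral inequalities to the \emph{pointwise} inclusion $v(u)-\nabla F'(\mu)(u)\in N_{Q_r}(u)$ $\mu$-a.e., I need to exhibit enough admissible directions $\eta$ at boundary points of $Q_r$. This is handled by a measurable-selection argument exploiting the convexity of $Q_r$ (which linearizes admissibility into the tangent cone at each boundary point); standard but delicate, it is exactly the technology developed in the AGS framework~\cite{ambrosio2008gradient} that I would invoke rather than redevelop.
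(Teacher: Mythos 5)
Your proposal is correct and follows essentially the same route as the paper: the identical splitting and constant $\lambda_r=L_{dR}\Vert d\Phi\Vert_{\infty,r}^2+\Vert dR\Vert_{\infty,r}L_{d\Phi}$ for (i), and for (ii) the first-order expansion $F(\pi^2_\#\gamma)=F(\mu)+\int \nabla F'(\mu)(u)\cdot(\tilde u-u)\,\d\gamma+o(C_2(\gamma))$ followed by the normal-cone relaxation on $\partial Q_r$ (the paper re-derives this expansion by Taylor expanding $\Phi$ and $R$ directly rather than invoking (i), a cosmetic difference). Your explicit treatment of the converse via plans $(\id\times(\id+s\eta))_\#\mu$, with the final pointwise localization delegated to the measurable-selection machinery of~\cite{ambrosio2008gradient}, is no less rigorous than the paper's own brief assertion of that step.
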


\begin{proof}
First, it is clear that $F$ is proper because $F^{(r)}(\delta_{u_0})=R(\Phi(u_0))$ is finite whenever $u_0\in Q_r$. It is moreover continuous (see Lemma~\ref{lem:Fcontinuous}) on its closed domain $\{ \mu \in \P_2(\Omega)\;;\; \mu(Q_r)=1\}$.

\paragraph{Proof of (i).} 
Let us denote $h(t):= F^{(r)}(\mu^\gamma_t)$. Since $dR$ and $d\Phi$ are Lipschitz on $\F_r$ and $Q_r$ respectively, $h(t)$ is differentiable with
\begin{align*}
h'(t) = \frac{d}{dt}F^{(r)}(\mu^\gamma_t) = \left\langle R'(\tint\Phi\d \mu^\gamma_t), \tint d\Phi_{(1-t)x+ty}(y-x) \d\gamma(x,y) \right\rangle.
\end{align*}
In particular, we can differentiate $t\mapsto \int \Phi\d\mu^\gamma_t = \int \Phi((1-t)x +ty)\d\gamma(x,y)$ because all $(\mu^\gamma_t)_t$ are supported on $Q_r$ where $d\Phi$ is uniformly bounded and Bochner integrals admit a dominated convergence theorem~\cite[Thm. E6]{cohn1980measure}. For $0\leq t_1<t_2<1$, we have the bounds
\[
\vert h'(t_2)-h'(t_1)\vert \leq (I)+ (II)
\]
where, one the one hand,
\begin{align*}
(I)&= \left\vert \left\langle R'\left(\tint\Phi\d \mu_{t_2}\right)-R'\left(\tint\Phi\d \mu_{t_1}\right), \tint d\Phi_{(1-t_2)x+t_2y}(y-x) \d\gamma(x,y) \right\rangle  \right\vert \\
&\leq \left[ L_{dR} \cdot \Vert d\Phi\Vert_{\infty,r}\cdot \vert t_2-t_1\vert \cdot C_1(\gamma)\right] \cdot \left[\Vert d\Phi\Vert_{\infty,r} \cdot C_1(\gamma)\right]\\
&\leq L_{dR} \cdot \Vert d\Phi\Vert_{\infty,r}^2\cdot C_2^2(\gamma)\cdot \vert t_2-t_1\vert 
\end{align*}
where we used H\"older's inequality to obtain $C_1^2(\gamma)\leq C_2^2(\gamma)$ is the last line. On the other hand,
\begin{align*}
(II) &= \left\vert \left\langle  R'\left(\tint\Phi\d \mu_{t_1}\right), \tint \left[ d\Phi_{(1-t_2)x+t_2y}-d\Phi_{(1-t_1)x+t_1y}\right](y-x)\d\gamma(x,y)  \right\rangle\right\vert \\
&\leq L_{d\Phi}\cdot \Vert dR\Vert_{\infty,r}\cdot C_2^2(\gamma)\cdot \vert t_2-t_1\vert.
\end{align*}
As a consequence, $h'$ is $\lambda_r\cdot C_2^2(\gamma)$ Lipschitz with $\lambda_r = L_{dR} \Vert d\Phi\Vert_{\infty,r}^2  + L_{d\Phi} \Vert dR\Vert_{\infty,r}$. In particular, using the notions defined in~\cite{ambrosio2008gradient} , $F^{(r)}$ is $(-\lambda_r)$-geodesically semiconvex. Remark that these bounds may explode when $r$ goes to infinity: this explains why we work with measures supported on $Q_r$.

\paragraph{Proof of (ii).} The proof is similar, with the difference that this property is a local one. 
We have the first-order Taylor expansions, for $u,\tilde u \in Q_r$ and $f,g \in \F_r$,
\begin{align*}
\Phi(\tilde u) &= \Phi(u) + d\Phi(\tilde u-u) +M(u,\tilde u) \\
R(g) &= R(f) + \langle R'(f),g-f\rangle + N(f,g)
\end{align*}
where the remainders $M$ and $N$ satisfy $\Vert M(u,\tilde u)\Vert\leq \frac12 L_{d\Phi} \cdot \vert \tilde u-u\vert^2$ and $\Vert N(f,g) \Vert \leq \frac12 L_{dR}\cdot \Vert g-f\Vert^2$. 
We denote by $\mu$ and $\nu$ the first and second marginals of $\gamma$, assume that they are both concentrated on $Q_r$, and obtain, by composition, the Taylor expansion
\begin{align*}
F^{(r)}(\nu) = F^{(r)}(\mu) + \left\langle R'(\tint \Phi\d\mu),\tint d\Phi_{u}(\tilde u-u)\d\gamma(u,\tilde u) \right\rangle + (I) + (II)
\end{align*}
where $(I) =   \left\langle R'(\tint \Phi\d\mu),\tint M(u,\tilde u)\d\gamma(u,\tilde u) \right\rangle$, so
\[
\vert (I)\vert \leq \frac12 \Vert dR\Vert_{\infty,r}\cdot L_{d\Phi} \cdot C_2^2(\gamma) = o(C_2(\gamma))
\]
and $(II) = N(\tint \Phi\d\mu, \tint \Phi \d\nu)$, so
\begin{align*}
 \vert (II)\vert &\leq  \frac12 L_{dR} \cdot \Vert \tint d\Phi_u(\tilde u-u)\d\gamma(u,\tilde u) + \tint M(u,\tilde u)\d\gamma(u,\tilde u) \Vert^2\\
 &\leq \frac12 L_{dR} \cdot \left( \Vert d\Phi\Vert_{\infty,r} \cdot C_1(\gamma) +  \frac12 \cdot L_{d\Phi} \cdot C_2^2(\gamma) \right)^2 = o(C_2(\gamma))
\end{align*}
where we used H\"older's inequality for the bound $C_1^2(\gamma)\leq C_2^2(\gamma)$. As a consequence,
\[
F^{(r)}(\nu) = F^{(r)}(\mu) + \int \langle R'(\tint \Phi\d\mu),d\Phi_{u}(\tilde u-u)\rangle \d\gamma(u,\tilde u)  + o(C_2(\gamma))
\]
and remember that the $j$-th component of $\nabla F'(\mu)$ is $u\mapsto \langle R'(\tint \Phi\d\mu),d\Phi_u({e_j})\rangle$ where $e_j$ is the $j$-th vector of the canonical basis of $\R^d$. This completely characterizes a velocity field satisfying (ii) on the interior of $Q_r$. On the boundary of $Q_r$, there is more freedom in the choice of $v(u)$ since $\pi^2_\# \gamma$ is constrained to be supported on $Q_r$ so $v(u)-\nabla F'(\mu)(u)$ can live in the normal cone of $Q_r$ at $u$, which is the set $\partial \iota_{Q_r}(u)$. The condition thus relaxes as $v(u) \in \partial (F'(\mu) + \iota_{Q_r})(u)$.
\end{proof}

The previous properties are sufficient to guarantee that Wasserstein gradient flows for the functionals $F^{(r)}$ are well defined.
\begin{lemma}\label{lem:WGF_Fr}
Under Assumptions~\ref{ass:regularity}, there exists a unique Wasserstein gradient flow for $F^{(r)}$ starting from any $\mu_0 \in \P_2(\Omega)$ concentrated on $Q_{r}$, i.e.\ a curve $(\mu^{(r)}_t)_{t\geq 0}$, continuous in $\P_2(\Omega)$, that solves $\partial_t \mu^{(r)}_t +\mathrm{div}(v^{(r)}_t\mu^{(r)}_t)=0$ where, for all $t>0$, $v^{(r)}_t(u) \in \partial (F'(\mu^{(r)}_t)(u) + \iota_{Q_r}(u))$ for $\mu^{(r)}_t$-a.e $u\in \Omega$.
\end{lemma}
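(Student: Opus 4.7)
The plan is to reduce the statement to an application of the general existence and uniqueness theory for Wasserstein gradient flows of geodesically semiconvex functionals developed in \cite{ambrosio2008gradient}, since Lemma~\ref{lem:wassersteinpropertiesJr} has already done most of the analytical work for the loss part of $F^{(r)}$.

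First I would decompose $F^{(r)} = \mathcal{L} + \mathcal{V} + \mathcal{I}_r$, where $\mathcal{L}(\mu) = R(\int \Phi \,\d \mu)$, $\mathcal{V}(\mu) = \int V \,\d \mu$, and $\mathcal{I}_r(\mu) = \int \iota_{Q_r}\, \d \mu$, and verify that each summand is $\lambda$-geodesically semiconvex along generalized geodesics for some $\lambda \in \R$: for $\mathcal{L}$ this is Lemma~\ref{lem:wassersteinpropertiesJr}(i); for $\mathcal{V}$ it follows from the semiconvexity of $V$ (Assumption~\ref{ass:regularity}(ii)) combined with the convexity of $Q_r$ via the standard theory of potential energies on $\P_2$; for $\mathcal{I}_r$ it is immediate from the convexity of $Q_r$, which additionally ensures that the closed domain of $F^{(r)}$ is itself a geodesically convex subset of $\P_2(\Omega)$. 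Lower semicontinuity follows from the Wasserstein continuity of $\mathcal{L}$ (Lemma~\ref{lem:Fcontinuous}), the at most quadratic growth of $V$, and the closedness of $Q_r$; coercivity is trivial from $F^{(r)} \geq 0$ combined with the uniform second-moment bound enforced by $\mu(Q_r)=1$. At this point, the general theory of \cite[Ch.~11]{ambrosio2008gradient} yields existence and uniqueness of a curve of maximal slope $(\mu^{(r)}_t)_{t \geq 0}$ starting from any admissible $\mu_0$, and such a curve automatically satisfies a continuity equation whose velocity is the minimal-norm element of the Wasserstein subdifferential of $F^{(r)}$.

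Finally, the identification $v^{(r)}_t \in \partial(F'(\mu^{(r)}_t) + \iota_{Q_r})$ follows by combining the first-order expansion of Lemma~\ref{lem:wassersteinpropertiesJr}(ii) with the well-known computation of the Wasserstein subdifferential of a potential energy $\int V\, \d\mu$, which contributes the pointwise Euclidean subdifferential $\partial V$. The main technical obstacle I anticipate is the bookkeeping at the boundary of $Q_r$, since admissible transport plans force perturbations to stay inside the constraint set; this is however exactly what the indicator term $\iota_{Q_r}$ encodes, so Lemma~\ref{lem:wassersteinpropertiesJr}(ii) already accounts for it and the matching between the pointwise and Wasserstein subdifferentials carries through without further subtlety.
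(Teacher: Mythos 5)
Your proposal is correct and follows essentially the same route as the paper: establish semiconvexity along generalized geodesics of $F^{(r)}$ (the loss-plus-constraint part via Lemma~\ref{lem:wassersteinpropertiesJr}-(i), the potential term $\int V\,\d\mu$ via the standard result for semiconvex potentials in \cite{ambrosio2008gradient}), then invoke the existence/uniqueness theory of \cite[Thm.~11.2.1]{ambrosio2008gradient}, identifying the velocity through the strong subdifferential characterization of Lemma~\ref{lem:wassersteinpropertiesJr}-(ii) adapted to include $V$. Splitting off the constraint as a separate convex indicator potential $\int \iota_{Q_r}\,\d\mu$ is only a cosmetic reorganization of what the paper already encodes in the definition of $F^{(r)}$ and in the admissible plans of Lemma~\ref{lem:wassersteinpropertiesJr}.
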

\begin{proof}
It is easy to see that if $V$ is $\lambda_V$-semiconvex, then the function $\mu \mapsto \int V \d \mu$ is $\lambda_V$ semiconvex along generalized geodesics (in the sense of \cite[Def. 9.2.4]{ambrosio2008gradient}, see~\cite[Prop. 10.4.2]{ambrosio2008gradient}). Combining with Lemma~\ref{lem:wassersteinpropertiesJr}-(i), we have that  $F^{(r)}$ is $(\lambda_V-\lambda_r)$-semiconvex along generalized geodesics.
Moreover, Lemma~\ref{lem:wassersteinpropertiesJr}-(ii) implies that $F^{(r)}$ admits strong Wasserstein subdifferentials on its domain~\cite[Def 10.3.1]{ambrosio2008gradient} and again, it is an easy adaptation to show that (ii) still holds with a potential term. So the existence of a unique Wasserstein gradient flow characterized as above is guaranteed by~\cite[Thm. 11.2.1]{ambrosio2008gradient}.
\end{proof}

We are in position to prove the well-posedness of Wasserstein gradient flows for the original functional $F$. Notice that, by the characterization in Lemma~\ref{lem:WGF_Fr}, the Wasserstein gradient flows for the functions $F^{(r)}$ all coincide for $r>r_0>0$ on $[0,T]$ if $\mu^{(2r_0)}_t$ is concentrated in $Q_{r_0}$ for all $t\in [0,T]$. Our strategy is thus simply to make sure that for all time $T$, such a $r_0>0$ exists, i.e. to make sure that the support of gradient flows does not grow too fast.

\begin{proof}[Proof of Proposition~\ref{prop:uniquenessWGF}]
Let $r_0$ be such that $\mu_0$ is concentrated on $Q_{r_0}$.
Given Lemma~\ref{lem:WGF_Fr}, for all $r>r_0$, there exists a unique, globally defined, Wasserstein gradient flow $(\mu^{(r)}_t)_{t\geq0}$ for $F^{(r)}$. 
For all $r>r_0$, consider the first exit time from $Q_r$:
\[
t_r \coloneqq \inf \{ t>0\;;\;  \mu^{(2r)}_t (Q_r)<1 \}.
\]
Note that the definition of $t_r$ involves the flow $(\mu_t^{2r})_t$ but in fact, for all $\bar r> r$ and $0\leq t\leq t_r$, it holds $\mu_t^{(2r)}=\mu_t^{(\bar r)}$ by the uniqueness in Lemma~\ref{lem:WGF_Fr}. Thus, if $t_r>0$, we have existence and uniqueness of a Wasserstein gradient flow in the sense of Definition~\ref{def:WassersteinGF} on $[0,t_r]$. It only remains to show that $\lim_{r\to \infty} t_r =\infty$ so that the gradient flow can be defined at all times.

Given the property of $v^{(r)}$ in Lemma~\ref{lem:wassersteinpropertiesJr}-(ii), for all time $0\leq t\leq t_r$, it holds $v^{(r)}_t \in \partial F'(\mu^{(r)}_t)$ in $L^2(\mu^{(r)}_t;\R^d)$. Therefore, using Assumption~\ref{ass:regularity}-(iii)-(c) and the boundedness of $dR$ on sublevel sets, we have the bound, for $0\leq t\leq t_r$,
\[
\vert v^{(r)}_t(u) \vert \leq C_1+C_2 r
\]
with constants $C_1$ and $C_2$ independent of $u,r$ and $t$. This shows, by Gr\"onwall's lemma applied to the flow of characteristics of the velocity field (this flow is defined below in Lemma \ref{lem:flowrepresentation}), that $\mu^{(r)}_t$  is concentrated on $\{ u \in \Theta\; ;\; \mathrm{dist}(u,Q_{r_0}) \leq (r_0+C_1/C_2) e^{tC_2}\}$ and thus, for all $T>0$ there exists $r>0$ such that $t_{r}>T$. Hence $\lim_{r\to \infty} t_r =\infty$ and the gradient flow from Definition~\ref{def:WassersteinGF} is uniquely well-defined on $[0,T[$ for $T>0$ arbitrary large.
\end{proof}

Let us now add a useful representation lemma for the Wasserstein gradient flow as the pushforward of $\mu_0$ by the flow of the velocity fields.

\begin{lemma}[Representation of the flow]\label{lem:flowrepresentation}
Under the assumptions of Proposition~\ref{prop:uniquenessWGF}, let $(\mu_t)_{t\geq 0}$ be the Wasserstein gradient flow of $F$ and $(v_t)_t$ the associated velocity fields. Consider the flow $X:\R_+\times \Omega\to\Omega$ which for all $u\in \Omega$, is an absolutely continuous solution to
\begin{align*}
X(0,u) = u \quad\text{and} \quad \partial_t X(t,u) = v_t(X(t,u)) \text{ for a.e.\ $t\geq 0$}.
\end{align*}
Then $X$ is uniquely well-defined, continuous, $X(t,\cdot)$ is Lipschitz on $Q_r$, uniformly on compact time intervals for all $r>0$, and it holds $\mu_t = (X_t)_\# \mu_0$.
\end{lemma}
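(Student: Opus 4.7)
The plan is to obtain the characteristic flow $X$ from an ODE well-posedness theory that tolerates the non-smoothness of $v_t$ caused by the projection onto $\partial V$, and then to identify $X_t\#\mu_0$ with $\mu_t$ via uniqueness of the continuity equation. All estimates will be local in time, controlled via confinement in some $Q_R$.

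First I would repeat the Gr\"onwall argument used in the proof of Proposition~\ref{prop:uniquenessWGF} to produce, for every $T>0$, a radius $R=R(T)$ such that $\mu_t$ is concentrated on $Q_R$ for all $t\in[0,T]$. On $Q_R$, Assumptions~\ref{ass:regularity} give uniform bounds on $\Vert d\Phi\Vert$ and $\Vert\partial V\Vert$, and the $W_2$-absolute continuity of $(\mu_t)_t$ together with the Lipschitzness of $dR$ on bounded sets imply that $t\mapsto R'(\int\Phi\,\d\mu_t)$ is continuous in $\F$. Consequently $\tilde v_t$ is bounded and $L$-Lipschitz in $u$ on $Q_R$, uniformly in $t\in[0,T]$, with $L$ depending only on $L_{d\Phi}$ and $\Vert R'\Vert_{\infty,R}$. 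The projection term $\mathrm{proj}_{\partial V(u)}(\tilde v_t(u))$ may be discontinuous in $u$, but semiconvexity of $V$ makes $\partial V+\lambda_V\,\mathrm{Id}$ monotone, and a straightforward Moreau-type computation yields the \emph{one-sided Lipschitz} estimate $(v_t(u_1)-v_t(u_2))\cdot(u_1-u_2)\leq (L+\lambda_V)|u_1-u_2|^2$ on $Q_R$.

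Next I would invoke the classical theory of evolution problems with maximal monotone perturbations (Br\'ezis) applied to $\dot X\in \tilde v_t(X)-\partial V(X)$ to obtain, for each $u\in\Omega$, a unique absolutely continuous solution $X(\cdot,u)$ on $[0,T]$ (up to first exit from $Q_R$, which by the uniform velocity bound cannot occur before a controlled time); by the minimality property recalled in Proposition~\ref{eq:classicalGF}, this solution satisfies $\partial_t X(t,u)=v_t(X(t,u))$ for a.e.\ $t$. Differentiating $|X(t,u_1)-X(t,u_2)|^2$ and using the one-sided Lipschitz estimate, Gr\"onwall delivers $|X(t,u_1)-X(t,u_2)|\leq e^{(L+\lambda_V)t}|u_1-u_2|$, which simultaneously proves uniqueness, the required Lipschitzness of $X(t,\cdot)$ on each $Q_r$ uniform on compact time intervals, and joint continuity of $X$ once combined with the uniform bound $|v_t|\leq C(R)$.

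Finally, set $\tilde\mu_t:=X(t,\cdot)_\#\mu_0$. Using that $X(t,\cdot)$ is Lipschitz on $\mathrm{spt}\,\mu_0\subset Q_{r_0}$ and that $t\mapsto X(t,u)$ is absolutely continuous with $\partial_t X=v_t\circ X$, the change of variables formula gives $\frac{d}{dt}\int\varphi\,\d\tilde\mu_t=\int\nabla\varphi\cdot v_t\,\d\tilde\mu_t$ for every smooth compactly supported $\varphi$, so $(\tilde\mu_t)_t$ is itself an absolutely continuous curve in $\P_2(\Omega)$ solving the continuity equation with velocity $v_t\in-\partial F'(\mu_t)$ and initial datum $\mu_0$. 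Since $\mu_t$ enjoys the same property, the uniqueness statement of Proposition~\ref{prop:uniquenessWGF} (equivalently, uniqueness of the continuity equation for bounded, one-sided Lipschitz fields via Ambrosio's superposition principle) forces $\tilde\mu_t=\mu_t$ on $[0,T]$, and arbitrariness of $T$ gives the global identity. The main obstacle is precisely the lack of pointwise spatial regularity of $v_t$: semiconvexity of $V$ is the ingredient that replaces Cauchy-Lipschitz by a one-sided Lipschitz condition, which is just strong enough to drive both the characteristic ODE and the Gr\"onwall estimate for Lipschitz dependence on the initial datum.
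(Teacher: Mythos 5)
Your proof is correct and follows essentially the same route as the paper's, which simply cites \cite[Lemma 8.1.4]{ambrosio2008gradient} for well-posedness, continuity and Lipschitz dependence of the characteristic flow under the one-sided Lipschitz property of $v_t$ on each $Q_r$ (exactly the estimate you derive from semiconvexity of $V$ together with the Lipschitzness of $\tilde v_t$ on $Q_R$, combined with the confinement argument) and \cite[Prop. 8.1.8]{ambrosio2008gradient} for the pushforward representation. One small caveat: in your final identification $\tilde\mu_t=\mu_t$, the uniqueness you actually need is that of the \emph{linear} continuity equation driven by the fixed bounded one-sided Lipschitz field $v_t$ (your parenthetical superposition argument, which is precisely what the cited Prop.~8.1.8 provides), not the uniqueness statement of Proposition~\ref{prop:uniquenessWGF}; the latter concerns Wasserstein gradient flows, and $\tilde\mu_t$ is not a priori one, since its driving field lies in $-\partial F'(\mu_t)$ rather than $-\partial F'(\tilde\mu_t)$, so invoking it there would be circular.
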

\begin{proof}
The claims concerning $X$ are classical and follow from the fact that $v_t$ satisfies a one-sided Lispchitz property on $Q_r$, uniformly on compact time intervals~\cite[Lemma 8.1.4]{ambrosio2008gradient}. The expression as a pushforward is also a general property of the continuity equation, see~\cite[Prop. 8.1.8]{ambrosio2008gradient}.
\end{proof}

\subsection{Proof of the many-particle limit (Theorem~\ref{th:manyparticle})}
While we could rely on abstract stability results for Wasserstein gradient flows~\cite[Thm.11.2.1 (Stability)]{ambrosio2008gradient} our proof is direct and uses basic arguments. It also gives an independent argument for the existence of Wasserstein gradient flows, distinct from the standard one : it involves a  discretization in space instead of the classical discretization in time.

\paragraph{Step (i).} We first show that, at least on a small time interval $[0,t_r]$, the paths are contained in $Q_r$ for some $r>r_0$. Let us introduce $t_r$ the first exit time from $Q_r$ 
\[
t_r := \inf \left\{ t>0\;;\; \exists m\in \N, \mu_{m,t}(Q_r)<1\right\}.
\]
In order to show that $t_r$ is strictly positive, it is sufficient to bound the velocity of individual particles before $t_r$. Consider $L_{V,r}$ the Lipschitz constant of $V$ on $Q_r$. Given the expression of the velocity of each particle (given in Eq.~\eqref{eq:velocity}) and the minimum travel distance $r-r_0$ required to exit $Q_r$, we obtain the lower bound on the exit time $t_r\geq (r-r_0)/(\Vert d\Phi \Vert_{\infty,r}\Vert dR\Vert_{\infty,r}+L_{V,r})>0$.

\paragraph{Step (ii).} Let us now work on the time interval $[0,t_r]$ and prove the existence of a limit curve $t\mapsto \mu_t$ in the space $\P_2(\Theta)$ using standard estimates for gradient flows and compactness. Our starting point is the bound, for $0\leq t_1<t_2\leq t_r$,
\[
W_2(\mu_{m,t_1},\mu_{m,t_2})^2\leq \frac1m \sum_{i=1}^m \vert \mathbf u_{m,i}(t_2)- \mathbf u_{m,i}(t_1)\vert^2 \leq  \frac{(t_2-t_1)}{m} \int_{t_1}^{t_2} \sum_{i=1}^m \vert \mathbf u'_{m,i}(s)\vert^2\d s
\]
which follows by matching each particle at $t_1$ to its future position at $t_2$, and by Jensen's inequality. Recalling the identity
$
\frac1m \sum_{i=1}^m \vert \mathbf u_{m,i}'(t)\vert^2=-\frac{d}{dt} F(\mu_{m,t})
$
from Proposition~\ref{eq:classicalGF}, it follows
\[
W_2(\mu_{m,t_1},\mu_{m,t_2}))\leq\sqrt{t_2-t_1}\left( \sup_m F(\mu_{m,0}) - \inf_{\mu\in \P(\R^d)} F(\mu))\right)^{1/2}
\]
and thus the family of curves $(t\mapsto \mu_{m,t})_m$ is equicontinuous in $W_2$ on $[0,t_r]$, uniformly in $m$. Moreover, for all $t\in [0,t_r]$, the family $(\mu_{m,t})_m$  lies in a $W_2$ ball, as such weakly precompact (but a priori not $W_2$-precompact). Since the weak topology is weaker than the topology of $W_2$, by Ascoli theorem, we can extract a subsequence converging weakly to a curve $(\mu_t)_{t\geq 0}$ continuous in the weak topology, which is concentrated in $Q_r$ at all time. We have also uniform convergence in the Bounded Lipschitz metric, which metrizes weak convergence of probability measures. In the following we only consider this subsequence, still denoted by $(\mu_m)_m$.

\paragraph{Step (iii).} The next step is to show that the limit curve $(\mu_t)$ satisfies a continuity equation as in Definition~\ref{def:WassersteinGF}. Consider the velocity fields $v_{m,t}$ defined in Equation~\eqref{eq:velocity} and let us define $v_t$ the analog for the limit curve $(\mu_t)_t$. We want to show that the sequence $(E_m)_m$ of \emph{momenta}, the vector valued measures on $[0,t_r]\times \Omega$ defined by $E_m := v_{m,t}\mu_{m,t}\d t$, converges weakly to $E:= v_t\mu_t\d t$. Notice that these measures are also concentrated on $Q_r$. For any bounded and continuous function $\varphi:[0,t_r]\times \R^d\to\R^d$, it holds
\begin{equation}\label{eq:cvgcemomentum}
\left\vert \int \varphi\cdot \d (E_m-E) \right\vert \leq
\Vert \varphi\Vert_\infty \int \vert v_{m,t}(u)-v_t(u)\vert\d \mu_{m,t}(u)\d t  
+ \left\vert  \int \varphi\cdot v_t\d (\mu_{m,t} -\mu_t) \d t\right\vert.
\end{equation}
We first prove that the first term in~\eqref{eq:cvgcemomentum} tends to $0$. Since all $(\mu_{m,t})_{m,t}$ are concentrated on $Q_r$, it is sufficient to show that the sequence of velocity fields $(t,u)\mapsto v_{m,t}(u)$ converges uniformly on $[0,t_r]\times Q_r$ to $(t,u)\mapsto v_t(u)$.
We have, using the fact that a projection on a convex set is $1$-Lipschitz, 
\[
\vert v_{m,t}(u) - v_{t}(u)\vert \leq 2\vert \tilde v_{t,m}(u)-\tilde v_t(u)\vert \leq 2 \Vert d\Phi\Vert_{\infty,r} \cdot \Vert R'(\tint \Phi\d\mu_{m,t})-R'(\tint \Phi\d\mu_{t})\Vert.
\]
Moreover, we have for all $t\in{[0,t_r]}$,
\begin{align*}
\Vert R'(\tint \Phi\d\mu_{m,t})-R'(\tint \Phi\d\mu_{t})\Vert &\leq \Vert dR\Vert_{\infty,r}\cdot \Vert \tint \Phi\d\mu_{m,t} - \tint \Phi\d\mu_{t}\Vert \\
&\leq \Vert dR\Vert_{\infty,r}\cdot  \sup_{\substack{f\in \F,\, \Vert f\Vert \leq 1}} \tint \langle f, \Phi(u)\rangle \d (\mu_{m,t}-\mu_{t})(u)  \\
&\leq\Vert dR\Vert_{\infty,r}\cdot \max\{ \Vert \Phi \Vert_{\infty,r},\Vert d\Phi\Vert_{\infty,r}\}\cdot \Vert \mu_{m,t}-\mu_t\Vert_\BL .
\end{align*}

Since the convergence of $(t\to\mu_{m,t})_m$ is uniform in the Bounded Lipschitz norm, this proves uniform convergence of the velocity fields and the convergence of the first term in~\eqref{eq:cvgcemomentum} to $0$.
The second term also converges to 
$0$ because $(t,u)\mapsto\varphi(t,u) \cdot v_t(u)$ is continuous and bounded. We thus conclude that $E_m$ tends weakly to $E$ and, in particular, the continuity equation~\eqref{eq:gradientflow} is also satisfied in the limit. As $(v_t)_t$ is bounded on $Q_r$, uniformly in time, one has $\int_0^{t_r}\int_\Omega \vert v_t(u)\vert^2 \d \mu_t(u) \d t<\infty$ which proves that $(\mu_t)_t$ is absolutely continuous in $W_2$.

\paragraph{Step (iv).}So far, we have shown the convergence, up to a subsequence, to a Wasserstein gradient flow on $[0,t_r]$: it remains to show that $\lim_{r\to \infty} t_r =\infty$. Since $F(\mu_{m,0})\to F(\mu_0)$ and all paths $(\mu_{t,m})_t$ decrease monotonically the value of $F$, everything lies in a sublevel of $R$, where $dR$ is bounded. It follows that a uniform bound on the velocity of the particles with linear growth in $r$ is available and, by Gr\"onwall's inequality, we obtain that $\lim_{r\to \infty} t_r =\infty$, just as in the end of the proof of Proposition~\ref{prop:uniquenessWGF}. The theorem follows by combining this result with the uniqueness stated in Proposition~\ref{prop:uniquenessWGF}.


\section{Convergence to global minimizers}\label{app:global}
We give in this section a proof of Theorems~\ref{th:mainhomogeneous} and \ref{th:mainbounded}. All results have two  versions: one in the $2$-homogeneous setting (Assumptions~\ref{ass:2homogeneous}) and its counterpart in the partially $1$-homogeneous setting (Assumptions~\ref{ass:bounded}). We have displayed in Figure~\ref{fig:homogeneous} the level sets of functions with these homogeneity properties, in order to highlight the differences between these two cases. The proofs tend to be more straightforward in the $2$-homogeneous setting and they can be read independently of the other case. This section is organized as follows:
\begin{itemize}
\item In Section~\ref{app:optimalityconditions}, we justify the global optimality conditions.
\item We give in Section~\ref{app:escape} a criteria for Wasserstein gradient flows to escape from neighborhoods of non-optimal stationary points, and we also characterize measures that can be limits of Wasserstein gradient flows. These results are valid for arbitrary initializations.
\item In Section~\ref{app:separation}, we prove that the assumption on the support of the initialization made in Theorems~\ref{th:mainhomogeneous} and \ref{th:mainbounded} is preserved by Wasserstein gradient flows.
\item All these facts combined lead to a proof of Theorems~\ref{th:mainhomogeneous} and \ref{th:mainbounded} in Section~\ref{app:mainproof}.
\end{itemize}

It will be often the case in the statements and in the proofs that they involve the projection $h^i(\mu)$ of a probability measure $\mu \in \P(\Omega)$ (with $i=1,2$) (introduced in Section~\ref{app:lifting}) instead of $\mu$ itself. This is motivated by two facts: (i) this projected measure it generally the object of interest in the optimization problem as it clears the redundancy caused by homogeneity and (ii) the assumptions that the projection $h^i(\mu_t)$ of a Wasserstein gradient flow converges is more reasonable than the convergence in $W_2$ of the original gradient flow, where generally no compactness is available.

\subsection{Optimality conditions (proof of Proposition~\ref{prop:stationnary})}\label{app:optimalityconditions}
Let us first remark that, by a first order Taylor expansion of $R$, we have that for all $\mu, \sigma \in \M(\Omega)$ with $F(\mu), F(\sigma) < \infty$, it holds $\int \vert F'(\mu) \vert d \sigma <\infty$ and
\[
\frac{d}{d\epsilon} F(\mu+\epsilon \sigma)\vert_{\epsilon=0} = \int_\Omega F'(\mu) d \sigma \quad\text{with}\quad F'(\mu): u \mapsto \langle R'(\tint \Phi \d \mu),\Phi(u)\rangle + V(u).
\]
Let $\mu,\nu \in \M_+(\Omega)$ be such that $F(\nu), F(\mu)<\infty$, consider $\sigma := \nu - \mu$ and its Lebesgue decomposition $\sigma = f\mu + \sigma^{\perp}$ where $f\in L^1(\mu)$, $\delta^{\perp}\in \M_+(\Omega)$ is singular to $\mu$ (see~\cite[Thm. 4.3.2]{cohn1980measure}). Clearly, by the above first order formula, it is necessary to have $F'(\mu)\geq 0$ everywhere with equality $\mu$-a.e., for $\mu$ to be a minimizer. It is also sufficient since in this case we have, by convexity,
\[
0 = \frac{d}{dt} F(\mu+t \sigma)\vert_{t=0} \leq   \frac{d}{dt} \left( (1-t)F(\mu)+t F(\nu)\right)\vert_{t=0} = F(\nu)-F(\mu).
\]

\subsection{A criteria to escape from non-optimal stationary points}\label{app:escape}
We now give a criteria for Wasserstein gradient flows to escape from non-optimal stationary points. It is valid both in the finite-particle regime and in the many-particle limit. Such a result supports the idea that, even in the finite-particle case (i.e.\ classical gradient flows), the point of view using measures is natural.

\begin{figure}
\centering
\begin{subfigure}{0.48\textwidth}
\centering
\includegraphics[scale=0.35]{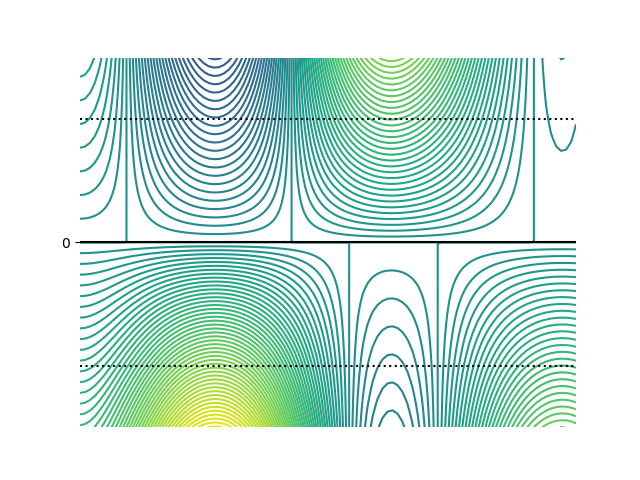}
\caption{Positively $1$-homogeneous in the vertical variable.}
\end{subfigure}%
\begin{subfigure}{0.48\textwidth}
\centering
\includegraphics[scale=0.35,trim=-1cm 0cm 1cm 0cm,clip]{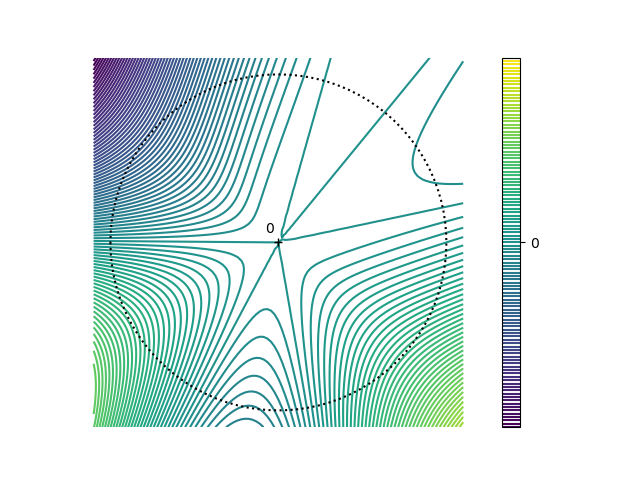}
\caption{Positively $2$-homogeneous.}
\end{subfigure}
\caption{Level sets of some functions on $\Omega = \R^2$ with homogeneity properties. The derivative $F'(\mu)$ of $F$, seen as a continuous function on $\Omega$ ``looks like'' plot (a) in the partially $1$-homogeneous case and like plot (b) in the $2$-homogeneous case. Wasserstein gradient flows of $F$ are simply set of particles $\mu_t$ that ``slide down'' such landscapes following the direction $-\nabla F'(\mu_t)$ (the subtlety being that the landscape itself depends on $\mu_t$). Minimizers $\mu^*$ of $F$ over $\M_+(\Omega)$ are characterized by the fact that this function $F'(\mu^*)$ is nonnegative on $\Omega$ and vanishes on the support of $\mu^*$. By homogeneity, it is sufficient to study these functions restricted to a subspace (dotted lines) as we do in the proofs.}
\label{fig:homogeneous}
\end{figure}

\subsubsection{The $2$-homogeneous case}
We start with the positively $2$-homogeneous setting which is slightly simpler. We consider the operator $h^2: \M_+(\R^d)\to \M_+(\S^{d-1})$ defined in Appendix \ref{app:lifting}. To simplify notations, measures on the sphere $\S^{d-1}$ are interpreted hereafter as measures on $\R^d$ concentrated on the sphere.

\begin{proposition}[Criteria to espace local minima]\label{prop:escape2}
Under Assumptions~\ref{ass:2homogeneous}, let $\mu \in \M_+(\R^d)$ be such that  $F'(\mu)$ is not nonnegative. There exists $\epsilon>0$ and a set $A\subset \Omega$ such that if $(\mu_t)_t$ is a Wasserstein gradient flow of $F$ satisfying $\Vert h^2(\mu) - h^2(\mu_{t_0})\Vert_\BL< \epsilon$ for some $t_0\geq 0$ and $\mu_{t_0}(A)>0$ then there exists $t_1>t_0$ such that $\Vert h^2(\mu) - h^2(\mu_{t_1})\Vert_\BL\geq \epsilon$. 

Such a set is given by $A=\{ r\theta \;;\; r \in {]0,\infty[}\text{ and } \theta \in K\}$ where $K$ is the $(-\eta)$-sublevel set of the restriction of $F'(\mu)$ to the unit sphere, for some $\eta>0$ that can be chosen arbitrarily close to $0$.
\end{proposition}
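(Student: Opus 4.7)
By $2$-homogeneity of $\Phi$ and $V$, the function $u \mapsto F'(\mu)(u) = \langle R'(\int \Phi \d\mu), \Phi(u)\rangle + V(u)$ is $2$-homogeneous, so if it fails to be nonnegative then its restriction to $\S^{d-1}$ attains negative values. For any $\eta > 0$ smaller than $\sup_{\S^{d-1}}(-F'(\mu))$, the set $K = \{\theta \in \S^{d-1} : F'(\mu)(\theta) \leq -\eta\}$ is nonempty and compact, and $A$ is its open cone. The plan is a contradiction argument: assume that $\Vert h^2(\mu_t) - h^2(\mu)\Vert_\BL < \epsilon$ for all $t \geq t_0$, with $\epsilon$ to be chosen small depending only on $\mu$ and $\eta$, and show that the $|u|^2$-weighted mass of the flow initially in $A$ grows without bound, contradicting the trivial estimate $h^2(\mu_t)(\S^{d-1}) \leq h^2(\mu)(\S^{d-1}) + \epsilon$ that follows by testing the BL norm against the constant~$1$.

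\textbf{From BL closeness to uniform closeness of $F'$.} Since $\Phi$ is $2$-homogeneous, one has $\int \Phi \, \d\nu = \int \tilde \Phi \, \d h^2(\nu)$ where $\tilde\Phi := \Phi|_{\S^{d-1}}$ is bounded and $C^1$. Hence, by duality of the BL norm, $\Vert \int \Phi \, \d(\mu_t - \mu)\Vert_\F = O(\epsilon)$. Combined with Lipschitzness of $R'$ on bounded subsets of $\F$ (from Assumptions~\ref{ass:regularity}), boundedness of $\tilde\Phi$ and $d\tilde\Phi$, this propagates to uniform $C^1$ closeness of $F'(\mu_t)|_{\S^{d-1}}$ to $F'(\mu)|_{\S^{d-1}}$ at scale $O(\epsilon)$. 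For $\epsilon$ small enough, this forces $F'(\mu_t)(\theta) \leq -\eta/2$ for every $\theta \in K$ and $t \geq t_0$.

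\textbf{Radial growth and escape.} Euler's identity applied to the $2$-homogeneous $\Phi$ yields $u \cdot \tilde v_t(u) = -2\langle R'(\int \Phi \d \mu_t), \Phi(u)\rangle$. The same identity, adapted to subgradients of a $2$-homogeneous semiconvex $V$ (any $p \in \partial V(u)$ satisfies $p\cdot u = 2V(u)$, obtained by letting $\lambda \to 1$ in the subgradient inequality written at $\lambda u$ and using the quadratic slack from semiconvexity), gives $u\cdot \mathrm{proj}_{\partial V(u)}(\tilde v_t(u)) = 2V(u)$. Combining, $u \cdot v_t(u) = -2 F'(\mu_t)(u)$. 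Using Lemma~\ref{lem:flowrepresentation}, decompose $X(t,u) = r(t,u)\theta(t,u)$ with $r = |X|$ and $\theta = X/|X| \in \S^{d-1}$: the radial equation reads $\dot r/r = -2F'(\mu_t)(\theta) \geq \eta$ whenever $\theta \in K$, while the angular motion is the (time-dependent) spherical gradient flow $\dot \theta = -\nabla_{\S^{d-1}} F'(\mu_t)(\theta)$. By the uniform $C^1$ closeness of Step~2, the latter is an $O(\epsilon)$ perturbation of the autonomous gradient flow of $F'(\mu)|_{\S^{d-1}}$, so the intrinsic dissipation $-|\nabla_\S F'(\mu_t)|^2$ dominates the extrinsic drift near $\partial K$ and $K$ is forward-invariant. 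Hence a particle with $\theta(t_0) \in K$ satisfies $\theta(t) \in K$ for all $t \geq t_0$ and $r(t,u)^2 \geq e^{2\eta(t-t_0)} r(t_0, u)^2$, so
\[
\int_A |u|^2 \d\mu_t(u) \geq e^{2\eta(t-t_0)} \int_A |u|^2 \d\mu_{t_0}(u),
\]
which is positive at $t_0$ (since $\mu_{t_0}(A) > 0$ and $0 \notin A$) and diverges, contradicting the bound on $h^2(\mu_t)(\S^{d-1})$ from the closeness assumption.

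\textbf{Main obstacle.} The delicate point is the rigorous verification that the time-dependent spherical flow preserves $K$: the extrinsic drift coming from the variation of $\int \Phi \d\mu_t$ in $t$ must be shown to be dominated by the intrinsic dissipation near $\partial K$, uniformly in $t$, for $\epsilon$ small. A safer implementation replaces the indicator of $A$ by a smooth cutoff $\chi : \S^{d-1} \to [0,1]$ supported in $K$, extended by $0$-homogeneity, and tracks $N(t) = \int \chi \,\d h^2(\mu_t)$. A direct computation along the flow, using Euler's identity both for $\chi$ (zero-homogeneous, so $\nabla \chi \cdot u = 0$) and for $F'(\mu_t)$ ($2$-homogeneous), gives
\[
\dot N(t) = -\int \bigl[ 4\chi F'(\mu_t) + \nabla_{\S^{d-1}} \chi \cdot \nabla_{\S^{d-1}} F'(\mu_t)\bigr] \d h^2(\mu_t);
\]
choosing $\chi$ with level sets aligned to those of $F'(\mu)$ makes the tangential term nonnegative up to an $O(\epsilon)$ error, and the first term is bounded below by $2\eta N(t)$, yielding exponential growth of $N(t)$ and the same contradiction.
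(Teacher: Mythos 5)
Your overall architecture is the same as the paper's: the sublevel set $K$ of $F'(\mu)$ restricted to $\S^{d-1}$, the $C^1$-stability of $\theta\mapsto F'(\mu_t)(\theta)$ on the sphere under $\Vert h^2(\mu_t)-h^2(\mu)\Vert_\BL<\epsilon$ (your Step 2 is exactly the paper's Lemma~\ref{lem:estimates2}), Euler's identity giving a radial velocity component $\geq \eta r$ inside the cone (the paper also uses $u\cdot v_t(u)=-2F'(\mu_t)(u)$, and your subgradient version of Euler's identity for the semiconvex $V$ is fine), and the contradiction with the mass bound coming from the BL ball. The genuine gap is the step you yourself flag: forward-invariance of the cone over $K$ under the \emph{perturbed, time-dependent} angular flow. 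Your justification (``the intrinsic dissipation dominates the extrinsic drift near $\partial K$'') is false in general: if $-\eta$ is a critical value of $F'(\mu)|_{\S^{d-1}}$, then $\nabla_{\S^{d-1}}F'(\mu)$ vanishes somewhere on $\partial K$, there is no dissipation there to dominate the $O(\epsilon)$ drift, and over an unbounded time horizon particles can leak out of $K$ (along the trajectory one only gets $\frac{d}{dt}F'(\mu)(\theta(t))\leq O(\epsilon^2)$, which accumulates). This is precisely why Assumption~\ref{ass:2homogeneous}-(ii) (Sard-type regularity) is in the hypotheses, and you never invoke it: the paper chooses $-\eta$ to be a \emph{regular value}, so that $\partial K=g_\mu^{-1}(-\eta)$ is a compact smooth hypersurface on which $\vert\nabla g_\mu\vert\geq\beta>0$; the $C^1$ perturbation of size $\epsilon\lesssim\min\{\eta,\beta\}$ then keeps the velocity strictly inward-pointing at $\partial K$, which is what makes $\R_+K$ forward-invariant and closes the argument.

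Your proposed ``safer implementation'' does not repair this. With a smooth cutoff $\chi$ whose transition band may contain near-critical points of $F'(\mu)|_{\S^{d-1}}$, the tangential term in your identity for $\dot N(t)$ is only bounded below by $-C\epsilon^2\cdot h^2(\mu_t)(\S^{d-1})$, an \emph{additive} error (controlled by the total mass, not by $N$). The resulting inequality $\dot N\geq 2\eta N-C'\epsilon^2$ gives no growth when $N(t_0)$ is small, and the hypothesis $\mu_{t_0}(A)>0$ allows $N(t_0)$ to be arbitrarily small since $\epsilon$ and $A$ must be fixed before the flow is given. To make the tangential term genuinely nonnegative you need $\vert\nabla_{\S^{d-1}}F'(\mu)\vert$ bounded below on the support of $\nabla\chi$, i.e.\ you must place the transition band around a regular value — again the Sard-type assumption. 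So the missing idea is concrete: choose $\eta$ via Assumption~\ref{ass:2homogeneous}-(ii) and extract the transversality constant $\beta$ on $\partial K$ by compactness; with that, either your invariance argument or your mollified variant goes through, and the proof coincides with the paper's.
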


\begin{proof}
Let $g_\mu: \S^{d-1}\to \R$ be the restriction of $F'(\mu)$ to the unit sphere, and first assume that $0$ is in the range of $g_\mu$. Let $-\eta<0$ be a negative regular value of $g_\mu$, which is guaranteed to exist (arbitrarily close to $0$) thanks to Assumption~\ref{ass:2homogeneous} and let $K\subset \S^{d-1}$ be the corresponding sublevel set. By the regular value theorem, its boundary $\partial K = g_\mu^{-1}(-\eta)$ is a differentiable orientable compact submanifold (of the sphere) of dimension $d-2$ and is orthogonal to the gradient field of $g_\mu$. Also, $V$ is differentiable on a neighborhood of $\partial K$, by the regular value property. It holds $g_\mu(\theta)<-\eta$ for $\theta \in K$ and $\nabla g_\mu(\theta) \cdot \vec n_\theta < -\beta$ for all $\theta \in \partial K$, where $\vec n_\theta$ is the unit normal vector to $\partial K$ pointing outwards, for some $\beta>0$.  In the following lemma, we show that these properties of $K$ are also satisfied in a neighborhood of $\mu$. We denote by $\Vert \cdot \Vert_{C^1}$ the maximum of the supremum norm of a function and the supremum norm of its gradient.

\begin{lemma}\label{lem:estimates2}
Let $\phi$ be the restriction of $\Phi$ to the sphere and let $\tilde g_\mu: \theta \in \S^{d-1}\mapsto \langle R'(\int \Phi \d \mu),\phi(\theta)\rangle$. For all $C_0>0$, there exists $\alpha>0$ such that for all $\mu, \nu \in \M(\Omega)$, such that $\Vert h^2(\mu)\Vert_\BL,\Vert h^2(\mu)\Vert_\BL< \eta$ it holds
\[
\Vert \tilde g_\mu - \tilde g_\nu \Vert_{C^1} \leq \alpha \Vert \phi \Vert_{C^1}^2 \cdot \Vert h^2(\mu)- h^2(\nu)\Vert_\BL.
\]
\end{lemma}
\begin{proof}
Let us introduce $\alpha>0$ the Lipschitz constant of $dR$ on the set $\{ \int \Phi \d \mu \; ; \; \mu \in \P_2(\R^d)\;;\; h^2(\mu)<C_0 \}$ which is bounded in $\F$. It holds
\begin{align*}
\Vert \tilde g_\mu - \tilde g_\nu \Vert_{C^1} & \leq \alpha \Vert \phi \Vert_{C^1}\cdot \Vert \tint \Phi\d\mu - \tint \Phi\d\nu\Vert \\
& \leq \alpha \Vert \phi \Vert_{C^1}\cdot \Vert \tint \phi\d h^2(\mu) - \tint \phi \d h^2(\nu)\Vert \\
& \leq \alpha \Vert \phi \Vert_{C^1}\cdot \sup_{\substack{f \in \F, \Vert f\Vert \leq 1}} \tint \langle f, \phi\rangle \d (h^2(\mu) - h^2(\nu)) \\
& \leq \alpha \Vert \phi \Vert_{C^1}^2 \cdot \Vert h^2(\mu)- h^2(\nu)\Vert_\BL.
\end{align*}
where the last bound is due to the fact that $u \mapsto \langle f,\phi(u)\rangle$ is $\Vert \phi \Vert_{C^1}^2$-Lipschitz and upper bounded in norm by $\Vert \phi \Vert_{C^1}$ whenever $f\in \F$ satisfies $\Vert f \Vert \leq 1$, and can be extended from the sphere $\S^{d-1}$ to $\R^d$ as a Lipschitz function with the same constant.
\end{proof}

We now fix a large enough $C_0>0$ and consider measures $\nu$ such that $\Vert h^2(\nu)\Vert_\BL< C_0$. By posing $\epsilon = \min\{ \eta,\beta\}/(4\alpha M^2)$ where $\alpha>0$ is given by the previous lemma,  if $\Vert h^2(\nu)-h^2(\mu)\Vert < \epsilon$, then $g_\nu$ is upper bounded by $-\eta/2$ on $K$ and $\nabla g_\nu(\theta) \cdot \vec n_\theta < -\beta/2$ for all $\theta \in \partial K$. Now let us consider a Wasserstein gradient flow $(\mu_t)_t$ of $F$ such that $\mu_0$ is concentrated on $B(0,r_0)$ for some $r_0>0$ and, posing $\epsilon$ as above, we assume that $\Vert h^2(\mu_{0})-h^2(\mu)\Vert_\BL<\epsilon$. As long as this holds, the condition $\Vert h^2(\mu_t)\Vert_\BL< C_0$ for Lemma~\ref{lem:estimates2} to apply also holds (if $C_0$ was chosen large enough in the first place). Let $t_1>0$ be the first time such that $\Vert h^2(\mu_{t_1})-h^2(\mu)\Vert_\BL \geq \epsilon$, which might a priori be infinite.

Consider the flow $X$ of Lemma~\ref{lem:flowrepresentation}. By construction of the set $K$, any path $t\mapsto u_t = X(t,u_0)$ with $u_0 \in \R_+ K$  remains in $\R_+ K$ for $t\leq t_1$. Moreover, by positive $2$-homogeneity of $F'(\mu_t)$, the radial component of the velocity field is lower bounded by $r\cdot \eta$ so $\vert u_t\vert \geq \vert u_0\vert \exp(\eta t)$. In particular, for $0\leq t<t_1$ and $\xi>0$,
\[
h^2(\mu_t)(K) \geq (\xi \exp(\eta t))^2\cdot \mu_0({]\xi,\infty[}\times K).
\]
It follows that as long as, for some $\xi>0$, $\mu_0({]\xi,\infty[}\times K)>0$, then $h^2(\mu_t)(K)$ grows exponentially fast: this implies that $\mu_t$ leaves the $\Vert \cdot \Vert_\BL$ ball (notice that all measures here are nonnegative). Hence $t_1$ is finite. Finally, if we had not assumed that $0$ is in the range of $g_\nu$ in the first place, then we could simply take $K=\S^{d-1}$ and, by similar arguments, find that $\vert h^2(\mu_t)\vert(\S^{d-1})$ grows exponentially fast for $t<t_1$ if $\mu_0(\R^d \setminus \{0\})>0$.
\end{proof}

We now give a general property of the stationary points.
\begin{lemma}\label{lem:cvgcev2}
Under Assumptions~\ref{ass:2homogeneous}, let $(\mu_t)_t$ be a Wasserstein gradient flow of $F$.
If $h^2(\mu_t)$ converges weakly to $\nu \in \M_+(\S^{d-1})$, then $F'(\nu)$ vanishes $\nu$-a.e.
\end{lemma}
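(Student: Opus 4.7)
The plan is to exploit $2$-homogeneity via Euler's identity to produce a coercive lower bound on the velocity field of the gradient flow and then invoke the Wasserstein energy dissipation identity.

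First, using that $\Phi$ and $V$ are positively $2$-homogeneous, I would establish the pointwise identity
\[
u\cdot v_t(u) \;=\; -2\,F'(\mu_t)(u) \qquad \text{for $\mu_t$-a.e.\ $u$.}
\]
The smooth part $h_t(u):=\langle R'(\tint\Phi\,d\mu_t),\Phi(u)\rangle$ is $2$-homogeneous, so Euler gives $u\cdot\nabla h_t(u)=2 h_t(u)$; and for any Fr\'echet subgradient $s\in \partial V(u)$ of the semiconvex $2$-homogeneous $V$, the subgradient inequality applied to the direction $h=(\lambda-1)u$ and sent to $\lambda\to 1^\pm$ forces $u\cdot s = 2V(u)$. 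Since $v_t(u) = \tilde v_t(u) - \mathrm{proj}_{\partial V(u)}(\tilde v_t(u))$ with $\tilde v_t = -\nabla h_t$ (Proposition~\ref{eq:classicalGF}), the identity follows. Writing $u=r\theta$ with $\theta\in \S^{d-1}$ and $g_t(\theta):=F'(\mu_t)(\theta)$, $2$-homogeneity of $F'(\mu_t)$ yields $F'(\mu_t)(u) = r^2 g_t(\theta)$, whence $\theta\cdot v_t(u) = -2r g_t(\theta)$ and therefore $|v_t(u)|^2 \ge 4 r^2 g_t(\theta)^2$.

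Integrating against $\mu_t$ and using $\int\varphi\,dh^2(\mu_t)=\int|u|^2\varphi(u/|u|)\,d\mu_t(u)$ gives $\int|v_t|^2\,d\mu_t \ge 4\int_{\S^{d-1}} g_t^2\,d h^2(\mu_t)$. Plugging this into the Wasserstein energy dissipation identity $F(\mu_0)-F(\mu_t)=\int_0^t\int|v_s|^2\,d\mu_s\,ds$ (valid thanks to the geodesic semiconvexity established in Lemma~\ref{lem:wassersteinpropertiesJr}) and using $F\ge 0$ yields the global estimate
\[
\int_0^\infty \int_{\S^{d-1}} g_t^2\,d h^2(\mu_t)\,dt \;\le\; \tfrac14 F(\mu_0) \;<\; \infty.
\]

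For the limit, weak convergence $h^2(\mu_t)\to\nu$ provides a uniform bound on $h^2(\mu_t)(\S^{d-1})$ and strong convergence $\int\Phi\,d\mu_t=\int\phi\,d h^2(\mu_t)\to\int\phi\,d\nu$ in $\F$; continuity of $R'$ and the argument of Lemma~\ref{lem:estimates2} then give uniform convergence $g_t\to g_\infty:=F'(\nu)|_{\S^{d-1}}$ on $\S^{d-1}$. A routine splitting combining uniform convergence of $g_t^2$ with weak convergence of $h^2(\mu_t)$ shows $\int g_t^2\,dh^2(\mu_t)\to \int g_\infty^2\,d\nu$ as $t\to\infty$. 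Since this quantity is Lebesgue integrable on $\R_+$ and has a limit at infinity, that limit must vanish; hence $\int g_\infty^2\,d\nu=0$, so $F'(\nu)=0$ $\nu$-a.e.

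The main obstacle is the Euler-type identity for subgradients of the non-smooth $V$: the smooth computation is direct, but one must verify $u\cdot s=2V(u)$ for every $s\in\partial V(u)$ when $V$ is only semiconvex, taking care of the degenerate case $V(u)=0$ where nonnegativity of $V$ is needed to conclude $u\cdot s=0$.
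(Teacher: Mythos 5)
Your proof is correct and follows essentially the same route as the paper: the energy dissipation identity, homogeneity to express the dissipation through $h^2(\mu_t)$, uniform convergence on the sphere via the estimate of Lemma~\ref{lem:estimates2}, and the observation that a nonnegative integrable function of $t$ with a limit at infinity must have limit zero. The only (harmless) variation is that you lower-bound the dissipation by the radial component $2F'(\mu_t)$ through the Euler identity for subgradients of the $2$-homogeneous $V$ (a fact the paper uses implicitly when reading off the radial component of $v_\nu$), whereas the paper keeps the full minimal-norm velocity field, shows it converges uniformly on the sphere, concludes $v_\nu=0$ $\nu$-a.e., and only then extracts $F'(\nu)=0$.
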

\begin{proof}
We again consider the function $\tilde g_\mu: \theta \in \S^{d-1}\mapsto \langle R'(\int \Phi \d \mu),\phi(\theta)\rangle$ for any measure $\mu\in \M_+(\R^d)$. At a point $\theta \in \S^{d-1}$, the velocity field $(v_t)_t$ associated to the gradient flow is obtained by applying the $2$-Lipschitz map $(\id - \proj_{\partial V(\theta)})$ to the vector of radial component $2 \tilde g_{\mu_t}(\theta)$ and tangential component $\nabla \tilde g_{\mu_t}(\theta)$. It follows, by Lemma~\ref{lem:estimates2}, that $v_{\mu_t}$ converges uniformly to $v_\nu$ on the sphere and as a consequence, $\int \vert v_t(u)\vert^2 d h^2(\mu_t)(u) \to \int \vert v_\nu(u)\vert^2 \d h^2(\nu)(u)$.
By recalling the energy identity for gradient flows $F(\mu_{s_1})-F(\mu_{s_2}) = \int_{s_1}^{s_2} \vert v_t(u)\vert^2 \d \mu_t \d t$ (see~\cite[Eq. (11.2.4)]{ambrosio2008gradient}) and this last term also equals $\int_{s_1}^{s_2} \vert v_t(u)\vert^2 \d h^2(\mu_t) \d t$ because the velocity field is positively $1$-homogeneous. This necessarily implies, since $F$ is lower bounded on $\P_2(\Omega)$, that $\vert v_\nu(u)\vert=0$ for $\nu$-a.e. $u\in \R^d$. In particular, looking at the radial component which is $2 F'(\nu) $, this implies that $F'(\nu)$ vanishes $\nu$-a.e.
\end{proof}

\subsubsection{The partially $1$-homogeneous case}
For the partially $1$-homogeneous case, we consider the operator $h^1: \M_+(\Omega)\to \M(\Theta)$ defined in Appendix \ref{app:lifting}. 

\begin{proposition}[Criteria to espace local minima]\label{prop:escape1}
Under Assumptions~\ref{ass:bounded}, let $\mu \in \M(\Omega)$ be such that  $F'(\mu)$ is not nonnegative. Then there exists $\epsilon>0$ and a set $A\subset \Omega$ such that if $(\mu_t)_t$ is a Wasserstein gradient flow of $F$ satisfying $\Vert h^1(\mu) - h^1(\mu_{t_0})\Vert_\BL< \epsilon$ for some $t_0\geq 0$ and $\mu_{t_0}(A)>0$ then there exists $t_1>t_0$ such that $\Vert h^1(\mu) - h^1(\mu_{t_1})\Vert_\BL\geq\epsilon$. 

Such a set is given by $A=(\R_+\times K^+)  \cup (\R_-\times K^-)$ where $K^+$ (respectively $K^-$) is the $(-\eta)$-sublevel set of $\theta \mapsto  F'(\mu)(1,\theta)$ (respectively of $\theta \mapsto  F'(\mu)(-1,\theta)$) for some $\eta>0$ that can be chosen arbitrarily close to $0$.
\end{proposition}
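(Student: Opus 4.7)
The plan is to mirror the blueprint of the proof of Proposition~\ref{prop:escape2}, with two structural adaptations: the relevant continuity estimate is with respect to $h^1$, and the final contradiction will come from energy dissipation of $F$ rather than from monotonicity of a nonnegative measure.

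First, using the partial $1$-homogeneity, I would write $F'(\mu)(w,\theta)=w\,\tilde g_\mu^+(\theta)$ for $w>0$ and $w\,\tilde g_\mu^-(\theta)$ for $w<0$, where $\tilde g_\mu^\pm(\theta):=\langle R'(\int\Phi\,d\mu),\phi(\theta)\rangle\pm\tilde V(\theta)$. Since $F'(\mu)$ is not nonnegative, either $\tilde g_\mu^+$ is negative somewhere or $\tilde g_\mu^-$ is positive somewhere (possibly both). The Sard-type regularity of Assumption~\ref{ass:bounded}(ii), combined with the boundary behavior~\ref{ass:bounded}(iii), provides regular values $-\eta<0$ of $\tilde g_\mu^+$ and $+\eta>0$ of $\tilde g_\mu^-$ arbitrarily close to zero in their respective ranges. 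The sets $K^+=\{\tilde g_\mu^+\leq-\eta\}$ and $K^-=\{\tilde g_\mu^-\geq\eta\}$ then have $C^1$ boundaries on which $\nabla\tilde g_\mu^\pm$ is bounded away from zero and points outward; moreover, since $\tilde V\geq 0$, one has $\tilde g_\mu^+-\tilde g_\mu^-=2\tilde V\geq 0$, so $K^+$ and $K^-$ are disjoint and in fact separated in $\Theta$.

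Next I would prove the continuity estimate analogous to Lemma~\ref{lem:estimates2},
\[
\|\tilde g_\mu^\pm-\tilde g_\nu^\pm\|_{C^1(\Theta)}\leq \alpha\,\|\phi\|_{C^1}^2\,\|h^1(\mu)-h^1(\nu)\|_{\BL},
\]
valid on BL-bounded sets. The crucial factorization $\int\Phi\,d\mu=\int\phi\,dh^1(\mu)$, together with local Lipschitzness of $R'$ and $C^1$-boundedness of $\phi$ and $\tilde V$ from Assumption~\ref{ass:bounded}, reduces the inequality to duality against the bounded-Lipschitz test functions $\theta\mapsto\langle f,\phi(\theta)\rangle$. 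I then choose $\epsilon>0$ small enough that on the $\epsilon$-BL ball around $h^1(\mu)$, the strict sublevel inequality on $K^+$ and the outward gradient margin on $\partial K^+$ persist for $\tilde g_{\mu_t}^+$ with half the original slack, and symmetrically for $K^-$. A direct inspection of the velocity field then shows that $A=(\R_+\times K^+)\cup(\R_-\times K^-)$ is forward-invariant along the flow for as long as $\mu_t$ stays in this ball: on $\{w>0\}\times\partial K^+$ the $\theta$-velocity $-w\nabla\tilde g_{\mu_t}^+$ points into $K^+$, and on $\{w=0\}\times K^+$ the minimal-norm projection formula from~\eqref{eq:velocity} applied to the subdifferential $\partial_w V(0,\theta)=[-\tilde V(\theta),\tilde V(\theta)]$ yields a strictly positive $w$-velocity equal to $-\tilde g_{\mu_t}^+\geq\eta/2$; the symmetric analysis controls $\R_-\times K^-$. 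Crucially, on all of $A$ the $w$-component of the velocity satisfies $|v_t^w|\geq\eta/2$, so $|v_t|^2\geq(\eta/2)^2$.

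The main obstacle, and where the partially $1$-homogeneous case genuinely departs from the $2$-homogeneous one, is closing the argument: because $h^1$ is a signed measure, I cannot simply lower bound $h^1(\mu_t)(K^+)$ via monotonicity as in the $h^2$ case. I would instead argue by contradiction via the Wasserstein energy dissipation identity $F(\mu_{t_0})-F(\mu_t)=\int_{t_0}^t\!\int|v_s|^2\,d\mu_s\,ds$. Supposing $\|h^1(\mu_t)-h^1(\mu)\|_{\BL}<\epsilon$ for all $t\geq t_0$, trapping gives $\mu_t(A)\geq\mu_{t_0}(A)>0$ throughout, and the velocity lower bound yields
\[
F(\mu_{t_0})\geq F(\mu_{t_0})-F(\mu_t)\geq \tfrac{\eta^2}{4}\,\mu_{t_0}(A)\,(t-t_0),
\]
whose right-hand side tends to infinity while the left-hand side is finite since $F\geq 0$; contradiction. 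A last technical subtlety, which I would address using the uniform-$C^1$-at-infinity condition in Assumption~\ref{ass:bounded}(iii)(a), is to ensure that in the unbounded-$\Theta$ case the sets $K^\pm$ are well-behaved---in particular, that their boundaries are compact within $\Theta$ so that the trapping argument on $\partial K^\pm$ remains valid.
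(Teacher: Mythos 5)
Your construction of $K^\pm$ via Sard-type regularity, the $C^1$ perturbation estimate in terms of $\Vert h^1(\mu)-h^1(\nu)\Vert_\BL$, and the invariance-plus-homogeneity analysis of the velocity field on $A=(\R_+\times K^+)\cup(\R_-\times K^-)$ (including the minimal-norm projection computation at $w=0$, giving $v^w=-g^+_{\mu_t}(\theta)\geq \eta/2$ there) coincide with the paper's proof. Where you genuinely depart is the concluding mechanism. The paper shows that $h^1(\mu_t)(K^+)$ grows at least linearly in time---keeping track of the possibly negative contribution of mass sitting in $\R_-\times K^+$---and then converts this growth into growth of $\Vert h^1(\mu_t)\Vert_\BL$ by testing against a clipped distance function to a slightly larger sublevel set $\tilde K^+$, an extra step needed precisely because $h^1(\mu_t)$ is a signed measure and its mass could concentrate near $\partial K^+$. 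You instead close the argument with the energy dissipation identity $F(\mu_{t_0})-F(\mu_t)=\int_{t_0}^t\int \vert v_s\vert^2\,\d\mu_s\,\d s$: while $h^1(\mu_t)$ remains in the $\epsilon$-ball, $A$ is forward invariant, so $\mu_s(A)\geq\mu_{t_0}(A)>0$, and $\vert v_s\vert\geq \eta/2$ on $A$, so the dissipated energy grows linearly in $t$, contradicting $0\leq F(\mu_t)\leq F(\mu_{t_0})<\infty$. This is legitimate in the present framework (it is exactly the identity the paper invokes in the proof of Lemma~\ref{lem:cvgcev2}, applied to $\mu_s$ itself rather than to its projection, so the non-invariance of the dissipation under $h^1$ is not an obstacle), and it buys a cleaner ending that avoids the auxiliary set $\tilde K^+$ and all signed-measure bookkeeping; the paper's route buys, in exchange, an explicit linear lower bound on the projected mass $h^1(\mu_t)(K^+)$, which is the quantity it reasons about informally when discussing particle complexity. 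One small correction: in the case $\Theta=\R^{d-1}$ the relevant issue is not compactness of $\partial K^\pm$ (which may well fail), but a uniform lower bound $\beta>0$ on $\vert\nabla g^\pm_\mu\vert$ along the possibly unbounded boundary, needed for your trapping margin after the $\epsilon$-perturbation; the paper secures it by choosing $\eta$ to be a regular value also of the $C^1(\S^{d-2})$ limit of $g_f(r\,\cdot)$ at infinity provided by Assumption~\ref{ass:bounded}-(iii)-(a), and this is the fix you should use in place of asking for compact boundaries. (Also, nothing forces the same $\eta$ for $K^+$ and $K^-$; two regular values, one for each side, work just as well.)
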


\begin{proof}
Let us suppose that $F'(\mu)$ takes a negative value on $\R_+\times \Theta$ (the case where it takes its negative values only on $\R_-\times \Theta$ is similar) and let us  introduce $g_\mu: \Theta\to \R$ the restriction of $F'(\mu)$ to $\{1\}\times \Theta$, that is $g_\mu(\theta) = \langle R'(\tint \Phi \d \mu), \phi(\theta)\rangle + \tilde V(\theta)$.
Let $-\eta<0$ be a negative regular value of $g$, which is guaranteed to exist (arbitrarily close to $0$) thanks to Assumption~\ref{ass:bounded}, and let $K^+\subset \Theta$ be the corresponding sublevel set. By the regular value theorem, its boundary $\partial K^+ = g_\mu^{-1}(-\eta)$ is a differentiable orientable manifold of dimension $d-2$ and is orthogonal to the gradient field of $g_\mu$. In the case where $\Theta$ is bounded, $\partial K^+$ is compact and, as a consequence, there is $\beta>0$ such that $\inf_{\theta\in \partial K} \vert dg_\mu(\theta)\vert\geq \beta$. If $\Theta=\R^{d-1}$ and the sublevel set $K^+$ is unbounded, then we have to choose $\eta$ so that it is also a regular value of the function on the sphere $\S^{d-2}$ to which $g_\mu$ converges uniformly at infinity. Then, the same positive lower bound holds for some $\beta>0$. It follows that on $K$, $g_\mu\leq -\eta$ and, $\nabla g_\mu(\theta) \cdot \vec n_\theta < -\beta$ for all $\theta \in \partial K$, where $\vec n_\theta$ is the unit normal vector to $\partial K$ pointing outwards. In the following lemma, we show that these properties of $K$ are also true with respect to $g_\nu$ if $\nu$ is close enough to $\mu$. We denote by $\Vert \cdot \Vert_{C^1}$ the maximum of the supremum norm of a function and the supremum norm of its gradient.

\begin{lemma}\label{lem:estimates1}
For all $C_0>0$, there exists $\alpha>0$ such that for all $\mu, \nu \in \M_+(\Omega)$ that satisfy $\Vert h^1(\mu)\Vert_\BL, \Vert h^1(\nu)\Vert_\BL< C_0$, it holds
\[
\Vert g_\mu - g_\nu \Vert_{C^1} \leq \alpha \Vert \phi \Vert_{C^1}^2 \cdot \Vert h^1(\mu)- h^1(\nu)\Vert_\BL.
\]
\end{lemma}
\begin{proof}
Let us introduce $\alpha>0$ the Lipschitz constant of $dR$ on the set $\{ \int \Phi \d \mu \; ; \; \mu \in \P(\R^d)\;;\; h^1(\mu)<C_0 \}$ which is bounded in $\F$. It holds
\begin{align*}
\Vert g_\mu - g_\nu \Vert_{C^1} & \leq \alpha \Vert \phi \Vert_{C^1}\cdot \Vert \tint \Phi\d\mu - \tint \Phi\d\nu\Vert \\
& \leq \alpha \Vert \phi \Vert_{C^1}\cdot \Vert \tint \phi\d h^1(\mu) - \tint \phi \d h^1(\nu)\Vert \\
& \leq \alpha \Vert \phi \Vert_{C^1}\cdot\sup_{\substack{f \in \F, \Vert f\Vert \leq 1}} \tint \langle f, \phi\rangle \d (h^1(\mu) - h^1(\nu)) \\
& \leq \alpha \Vert \phi \Vert_{C^1}^2 \cdot \Vert h^1(\mu)- h^1(\nu)\Vert_\BL.
\end{align*}
where the last bound is due to the fact that $u \mapsto \langle f,\phi(u)\rangle$ is $\Vert \phi \Vert_{C^1}$-Lipschitz and upper bounded in norm by $\Vert \phi \Vert_{C^1}$ whenever $f\in \F$ satisfies $\Vert f \Vert \leq 1$.
\end{proof}

We now fix a large enough $C_0>0$ and consider measures $\nu$ such that $\Vert h^1(\nu)\Vert_\BL< C_0$. By posing $\epsilon = \min\{ \eta,\beta\}/(4\alpha M^2)$ where $\alpha>0$ is given by the previous lemma,  if $\Vert h^1(\nu)-h^1(\mu)\Vert < \epsilon$, then $g_\nu$ is upper bounded by $-\eta/2$ on $K$ and $\nabla g_\nu(\theta) \cdot \vec n_\theta < -\beta/2$ for all $\theta \in \partial K$. Now, let us consider a Wasserstein gradient flow $(\mu_t)_t$ of $F$ such that $\mu_0$ is concentrated on $[-r_0,r_0]\times \Theta$ for some $r_0>0$ and $\Vert h^1(\mu_{0})-h^1(\mu)\Vert_\BL<\epsilon$. As long as this holds, the condition $\Vert h^1(\mu_t)\Vert_\BL< C_0$ for Lemma~\ref{lem:estimates1} to apply also holds. Let $t_1>0$ be the first time such that this last condition is not satisfied, which might a priori be infinite.

Consider the flow $X$ of Lemma~\ref{lem:flowrepresentation}. By construction of the set $K^+$, any path $t\mapsto (w_t,\theta_t) = X(t,(w_{0},\theta_{0}))$ with $(w_0,\theta_0)\in \R_+\times K$  remains in $\R_+\times K^+$ for $t\leq t_1$. Moreover, by homogeneity of $F'(\mu_t)$ in the variable $w$, the component of the velocity field on $w$ is lower bounded by $\eta/2$ so $ w_t \geq w_0 + t\cdot \eta/2$. For similar reasons, no path enters the set $\R_-\times K^+$ during this time interval and the paths inside this set satisfy $w_t \geq w_0 +t\cdot \eta/2$ (this follows by the fact that $F'(-1,\cdot) \geq F'(1,\cdot)$). In particular, for $0\leq t<t_1$,
\[
h^1(\mu_t)(K^+)\geq  (t\cdot \eta/2)\cdot \mu_0(\R_+\times K^+)  + \min \{ 0, t\cdot \eta/2 - r_0\}\cdot \mu_0(\R_-\times K^+).
\]
So we see that as long as $\mu_0(\R_+\times K)>0$, then $h^1(\mu_t)(K^+)$ grows at least linearly. 

If $\Theta = K^+$ then the previous lower bound immediately implies that $t_1$ is finite (choose the constant unit function in the definition of the norm $\Vert \cdot \Vert_\BL$). Otherwise, in order to finalize our proof, we need to make sure that the mass $h^1(\mu_t)(K^+)$ does not grow unbounded just near the boundary of $K^+$. To do so, let us consider another sublevel set $\tilde K^+$ of $g_\mu$ associated to another regular value in the range $\tilde \eta \in ]-\eta,0[$ and such that $\tilde K^+$ does not cover $\Theta$. As $g_\mu$ is Lipschitz, there exists $\Delta\in]0,1]$ such that the distance between $K^+$ and $\Theta \setminus \tilde K^+$ is at least $\Delta$. Taking another, smaller radius $\epsilon>0$ if necessary, by similar arguments as above, either $t_1$ is smaller than $2r_0/\tilde \eta$, or there exists $\tilde t>t_0$ such that $h^1(\mu_t)$ is nonnegative on $\tilde K^+$ for $t\in [\tilde t,t_1[$. Choosing, as a test function in the definition of the norm $\Vert \cdot \Vert_\BL$, the distance to the set $\Theta \setminus \tilde K$ clipped to $1$, one obtains, for $t \in [\tilde t, t_1[$,
\[
\Vert h^1(\mu_t)\Vert_\BL \geq \Delta \cdot h^1(\mu_t)(K^+)
\]
which also grows at least linearly with $t$. So $h^1(\mu_t)$ eventually leaves any $\Vert \cdot \Vert_\BL$-ball, hence $t_1$ is finite.
\end{proof}

As for the $2$-homogeneous case, we give a general property of the stationary points.
\begin{lemma}\label{lem:cvgcev1}
Under Assumptions~\ref{ass:bounded}, let $(\mu_t)_t$ be a Wasserstein gradient flow of $F$.
If $h^1(\mu_t)$ converges weakly to $\nu \in \M_+(\Theta)$, then $F'(\nu)$ vanishes $\nu$-a.e.
\end{lemma}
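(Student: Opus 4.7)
The plan is to mirror the proof of Lemma~\ref{lem:cvgcev2}, adapting to the partial rather than full homogeneity. Set $\ell_\mu(\theta) := \langle R'(\int \Phi\,\d\mu),\, \phi(\theta)\rangle$, $g_\mu := \ell_\mu + \tilde V$, and $\bar g_\mu := \ell_\mu - \tilde V$. A direct computation of the Wasserstein subgradient yields the velocity field $v_t(w,\theta) = (-g_{\mu_t}(\theta),\, -w\nabla g_{\mu_t}(\theta))$ for $w > 0$ and, analogously, $v_t(w,\theta) = (-\bar g_{\mu_t}(\theta),\, -w\nabla\bar g_{\mu_t}(\theta))$ for $w < 0$, with a soft-thresholded $w$-component at $w=0$. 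Since $h^1(\mu_t)$ is a signed measure with total variation bounded by $\int |w|\,\d\mu_t \leq (\int w^2\,\d\mu_t)^{1/2}$ (and this is uniformly controlled in $t$ whenever $\tilde V$ is bounded below, the typical case), the weak convergence to $\nu$ upgrades to $\|\cdot\|_\BL$ convergence, and Lemma~\ref{lem:estimates1} then delivers uniform $C^1$-convergence $g_{\mu_t} \to g_\nu$ and $\bar g_{\mu_t} \to \bar g_\nu$ on $\Theta$.

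Next, the energy identity $F(\mu_0) - \lim_{t\to\infty} F(\mu_t) = \int_0^\infty \int |v_s|^2\,\d\mu_s\,\d s$ together with the lower-boundedness of $F$ produces a sequence $t_n \to \infty$ along which $\int |v_{t_n}|^2\,\d\mu_{t_n} \to 0$. Keeping only the $w$-component, splitting by sign of $w$, and using the uniform convergence above to replace $g_{\mu_{t_n}}, \bar g_{\mu_{t_n}}$ by $g_\nu, \bar g_\nu$ gives
\[
\int_{\{w>0\}} g_\nu(\theta)^2\,\d\mu_{t_n}(w,\theta) \;+\; \int_{\{w<0\}} \bar g_\nu(\theta)^2\,\d\mu_{t_n}(w,\theta) \;\longrightarrow\; 0.
\]

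Finally, I would transfer these vanishing statements to $\nu$ itself. Decompose $h^1(\mu_t) = a_t - b_t$ with $a_t(B) := \int_{w>0} w\,\d\mu_t(w,B)$ and $b_t(B) := \int_{w<0} |w|\,\d\mu_t(w,B)$, both nonnegative and uniformly bounded in total variation; extract subsequential weak limits $a_\infty, b_\infty \in \M_+(\Theta)$ so that $\nu = a_\infty - b_\infty$. A Cauchy--Schwarz truncation in $w$ propagates the $L^2(\mu_{t_n})$-vanishing into showing that $a_\infty$ is concentrated on $\{g_\nu = 0\}$ and $b_\infty$ on $\{\bar g_\nu = 0\} = \{g_\nu = 2\tilde V\}$. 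These two sets being disjoint wherever $\tilde V > 0$, the positivity $\nu = a_\infty - b_\infty \geq 0$ forces $b_\infty$ to vanish there; hence $\nu = a_\infty$ is supported on $\{g_\nu = 0\}$, i.e., $F'(\nu)$ vanishes $\nu$-almost everywhere. The main obstacle is this last transfer: the energy identity controls velocities in $L^2(\mu_{t_n})$, whereas $\nu$ is a $w$-weighted projection of $\mu_{t_n}$, so some form of uniform integrability of $w$ under $\mu_t$---automatic if the support of $\mu_t$ stays bounded in $w$, but not forced by Assumptions~\ref{ass:bounded} alone---is what bridges the $\mu_{t_n}$-a.e.\ vanishing to the $\nu$-a.e.\ vanishing of $F'(\nu)$.
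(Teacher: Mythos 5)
The gap you flag yourself at the end is real and it is precisely the point where this strategy breaks down; the paper abandons the energy-dissipation route for exactly this reason (``the energy dissipation identity is not invariant by the projection operator''). In the $2$-homogeneous case of Lemma~\ref{lem:cvgcev2}, the velocity field is positively $1$-homogeneous in $u$, so $\int \vert v_t\vert^2\,\d\mu_t = \int \vert v_t\vert^2\,\d h^2(\mu_t)$ and the dissipation estimate transfers directly to the projected measure. Here the $w$-component of the velocity is $0$-homogeneous in $w$ (essentially $-g_{\mu_t}(\theta)$, up to soft-thresholding), so the energy identity only controls $\int g_{\mu_{t_n}}(\theta)^2\,\d\mu_{t_n}$, a plain $\mu_{t_n}$-average, while $\nu$ is the $w$-weighted projection: an arbitrarily small amount of $\mu_{t_n}$-mass carrying a large weight $\vert w\vert$ can account for all of $\nu$'s mass on $\{F'(\nu)\neq 0\}$. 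The uniform-in-$t$ integrability of $w$ you would need is not provided by Assumptions~\ref{ass:bounded}: $\tilde V$ may vanish identically (the unregularized neural-network examples), so $F(\mu_t)$ bounded gives no moment control in $w$; the confinement bound of Proposition~\ref{prop:uniquenessWGF} only controls the support of $\mu_t$ at each finite time, with a constant growing exponentially in $t$; and the paper's own escape mechanism (Proposition~\ref{prop:escape1}) shows that mass genuinely runs off to $\vert w\vert\to\infty$ along such flows, so no uniform second-moment bound can be expected. Your final set-theoretic step also needs $\tilde V>0$ to separate $\{g_\nu=0\}$ from $\{\bar g_\nu=0\}$ (it can be patched when $\tilde V\equiv 0$ since the two sets then coincide), but the decisive missing piece is the transfer from $L^2(\mu_{t_n})$-vanishing to $\nu$-a.e.\ vanishing.

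The paper's proof is instead dynamical. Using Lemma~\ref{lem:estimates1} and the assumed convergence of $h^1(\mu_t)$, the functions $g_{\mu_t}$ converge to $g_\nu$ uniformly in $C^1$. If $g_\nu(\theta_0)>0$ for some $\theta_0$, one builds a neighborhood $K$ of $\theta_0$ and a time $t_0$ after which no trajectory of the flow of Lemma~\ref{lem:flowrepresentation} enters $\R_+^*\times K$ and the $w$-velocity there is bounded above by $-g_\nu(\theta_0)/2$; since $\mu_{t_0}$ is concentrated on some $Q_{r_0}$ (finite-time confinement suffices here, no uniform-in-$t$ bound is needed), the positive-$w$ mass over $K$ drains to $w\le 0$ in finite time, whence $\nu(K)=0$. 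This gives $F'(\nu)\le 0$ $\nu$-a.e., and the reverse inequality follows from the escape criterion of Proposition~\ref{prop:escape1} together with the convergence of $h^1(\mu_t)$, which would be contradicted if $F'(\nu)$ were negative on a set charged by the flow. If you want to keep your plan, you would have to either prove a uniform bound on $\int w^2\,\d\mu_t$ (which the assumptions do not give) or replace the energy argument by this kind of trajectory-wise drift argument.
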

\begin{proof}
At a point $(1,\theta)\in \Omega$, with $\theta \in \Theta$, the velocity field $(v_t)_t$ associated to the gradient flow is given by applying the $2$-Lipschitz map $(\id - \proj_{\partial V(1,\theta)})$ to the vector with first component $g_{\mu_t}(\theta)$ and other components $\nabla g_{\mu_t}(\theta)$, where $g_{\mu_t}$ is defined as in the proof of Proposition~\ref{prop:escape1}. It follows, by Lemma~\ref{lem:estimates2}, that $v_{\mu_t}$ converges uniformly to $v_\nu$ on $\{1\} \times \Theta$. However, in contrast to Lemma~\ref{lem:cvgcev1}, the energy dissipation identity is not invariant by the projection operator, so we have to develop arguments similar to those used to prove Proposition~\ref{prop:escape1} (we do so with less details). Using the uniform convergence of $g_{\mu_t}$ if there exists $\theta_0\in \Theta$ such that $g_\nu(\theta)>0$ then we can build a set $\R^+ \times K$ with $\theta_0 \in \mathrm{int} K$  such that for some $t_0>0$, no trajectory of the flow $X_t$ enters this set after $t_0$ and the component of the velocity on $w$ is upper bounded by $-g_\nu(\theta)/2$. Since $\mu_{t_0}$ is concentrated on a set $Q_{r_0}$, this implies that $\mu_{t_0}(\R^*_+\times K)$ vanishes in finite time and in particular, $\nu(K)=0$. Thus we have shown that $F'(\nu)$ is nonpositive $\nu$-a.e. Also it can be deduced by Proposition~\ref{prop:escape1}, that $F'(\nu)$ is nonnegative $\nu$-a.e. So $F'(\nu)$ vanishes $\nu$-a.e.
\end{proof}

\subsection{Stability of separation properties}\label{app:separation}
Here we prove the fact that the separation properties of the support used in Theorems~\ref{th:mainbounded} and \ref{th:mainhomogeneous} are preserved by Wasserstein gradient flows. We give a proof based on \emph{topological degree} theory: this tool allows to cover the case of discontinuous velocity fields, which appear when $V$ is non-differentiable. In a more regular setting, the facts that follow are easier to prove because then, $\mu_t$ is the pushforward of $\mu_0$ by a homeomorphism. Let us give a definition of the topological degree sufficient to our setting.

\begin{definition}[Topological degree]\label{def:degree}
Let $f:\R^d\to \R^d$ be a continuous map, $A\subset \R^d$ a bounded open set and $y\notin f(\partial A)$. The topological degree $\deg(f,A,y)$ is a signed integer that satisfies:
\begin{enumerate}
\item If $\deg(f,A,y)\neq 0$ then there exists $x\in A$ such that $f(x)=y$. If $y\in A$ then $\deg(\id,A,y)=1$.
\item If $A_1,A_2$ are disjoint open subsets of $A$ and $y \notin f(\overline A \setminus (A_1\cup A_2))$ then $\deg(f,A,y)=\deg(f,A_1,y)+\deg(f,A_2,y)$.
\item
If $X:[0,1]\times \R^d \to \R^d$ is continuous and $y:[0,1]\to \R^d$ is a continuous curve such that $y(t)\notin X_t(\partial A)$ for all $t\in [0,1]$, then $\deg(X_t,A,y_t)$ is constant on $[0,1]$.
\end{enumerate}
\end{definition}
These properties characterize a uniquely well-defined map $\deg$ from the set of triplets $(f,A,y)$ as above to the set of signed integers~\cite[Thm. 1-2]{browder1983fixed}. Intuitively, it gives an \emph{algebraic} count of the number of solutions to $f(x)=y$ for $x\in A$, where algebraic means that a solution $x$ counts as $+1$ if $f$ preserves orientation around $x$ and $-1$ otherwise.

The following lemma shows that taking the support of a measure and its pushforward by a continuous map are operations that almost commute. They commute for instance if the map is \emph{closed} (i.e.\ maps closed sets to closed set).
\begin{lemma}\label{lem:supportpushforward}
If $f:\R^d\to \R^d$ is a continuous map and $\mu\in \M_+(\R^d)$, then $\spt(f_\# \mu)=\overline{f(\spt \mu)}$. 
\end{lemma}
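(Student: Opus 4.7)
The plan is to prove the equality by a direct double inclusion, using only the continuity of $f$ and the definition of support as the complement of the largest open set of $\mu$-measure zero. The key fact to keep in mind is that the complement $U := \R^d \setminus \spt\mu$ is itself an open set of $\mu$-measure zero (being the union of all such open sets, which is $\sigma$-compactly representable in $\R^d$), so any Borel set contained in $U$ is $\mu$-null. The topological degree machinery invoked just above this lemma is not needed here: it is aimed at later stability arguments for the support, whereas this lemma is a purely set-theoretic fact about pushforwards.

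For the inclusion $\overline{f(\spt\mu)} \subset \spt(f_\#\mu)$, I would first show $f(\spt\mu) \subset \spt(f_\#\mu)$ and then take closures, noting that $\spt(f_\#\mu)$ is closed by definition. Given $y = f(x)$ with $x \in \spt\mu$ and an open neighborhood $V$ of $y$, the set $f^{-1}(V)$ is open (by continuity of $f$) and contains $x$, hence $\mu(f^{-1}(V)) > 0$, i.e.\ $f_\#\mu(V) > 0$. Thus $y \in \spt(f_\#\mu)$.

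For the reverse inclusion $\spt(f_\#\mu) \subset \overline{f(\spt\mu)}$, I would argue by contrapositive. If $y \notin \overline{f(\spt\mu)}$, there exists an open neighborhood $V$ of $y$ disjoint from $f(\spt\mu)$, and therefore $f^{-1}(V) \cap \spt\mu = \emptyset$, i.e.\ $f^{-1}(V) \subset U$. Since $\mu(U) = 0$, one gets $f_\#\mu(V) = \mu(f^{-1}(V)) = 0$, so $y \notin \spt(f_\#\mu)$.

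I do not anticipate a genuine obstacle: both inclusions follow immediately from continuity of $f$ and the measure-zero property of $\R^d \setminus \spt\mu$. The only mild subtlety worth mentioning in the proof is that $\overline{f(\spt\mu)}$ may strictly contain $f(\spt\mu)$ when $f$ is not closed (for example, a continuous $f$ whose image of a closed set is not closed), which is precisely why the closure appears in the statement, and this is also why the lemma has to be stated with a closure rather than as an exact equality with $f(\spt\mu)$.
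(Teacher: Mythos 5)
Your proof is correct and follows essentially the same double-inclusion argument as the paper: continuity gives $f(\spt\mu)\subset \spt(f_\#\mu)$ (hence the closure inclusion since the support is closed), and for the converse a neighborhood of any $y\notin\overline{f(\spt\mu)}$ pulls back into $(\spt\mu)^c$, which is $\mu$-null. The only cosmetic difference is that you spell out why $(\spt\mu)^c$ has measure zero, which the paper takes for granted.
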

\begin{proof}
Let $y\in f(\spt \mu)$ and $\V$ a neighborhood of $y$. By continuity, $f^{-1}(\V)$ is the neighborhood of a point in $\spt \mu$ so $0< \mu(f^{-1}(\V))=f_\#\mu (\V)$, hence $y \in \spt f_\# \mu$ so $f(\spt \mu) \subset \spt f_\# \mu$. Conversely, let $y\in \overline{f(\spt \mu)}^c$ and let $\V$ a neighborhood of $y$ that does not intersect $\overline{f(\spt \mu)}$. This neighborhood satisfies $f^{-1}(\V)\subset (\spt \mu)^c$, so it holds $f_\# \mu (\V) = \mu(f^{-1}(\V))\leq \mu ((\spt \mu)^c)=0$. Hence $y \in (\spt f_\#\mu)^c$ so $\overline{f(\spt \mu)}^c \subset (\spt f_\#\mu)^c$ which implies $\spt f_\#\mu \subset \overline{f(\spt \mu)}$.
\end{proof}

\subsubsection{The $2$-homogeneous case}
We first state the property and the stability result that we wish to establish in the $2$-homogeneous setting.

\begin{property}[Separation, $2$-homogeneous case]\label{ass:separationhomogeneous}
$K$ is a closed subset of $\R^d$ contained in $B(0,r_b)$ that separates $r_a\S^{d-1}$ from $r_b\S^{d-1}$, for some $0<r_a<r_b$.
\end{property}

\begin{lemma}[Stability of the separation property]\label{lem:preservedhomogeneous}
Under Assumptions~\ref{ass:2homogeneous}, let $(\mu_t)_{t\geq 0}$ be a Wasserstein gradient flow of $F$. If the support of $\mu_0$ satisfies Property~\ref{ass:separationhomogeneous}, so does the support of $\mu_t$, for all $t>0$.
\end{lemma}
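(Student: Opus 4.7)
Plan: By Lemma~\ref{lem:flowrepresentation} we can write $\mu_t = (X_t)_\#\mu_0$ for the continuous flow $X_t : \R^d \to \R^d$ of the Wasserstein velocity field, and Lemma~\ref{lem:supportpushforward} then gives $\spt(\mu_t) = \overline{X_t(\spt(\mu_0))} =: K_t$. So it suffices to verify that $K_t$ satisfies Property~\ref{ass:separationhomogeneous} with suitable new radii $0 < r_a(t) < r_b(t)$.

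The first step extracts structural properties of $X_t$ from the 2-homogeneity of $\Phi$ and $V$, which propagates to make the velocity field $v_t$ in~\eqref{eq:velocity} positively 1-homogeneous in $u$ with $v_t(0) = 0$. Applying the uniqueness of the flow in Lemma~\ref{lem:flowrepresentation} to the trajectories $t \mapsto \lambda X_t(u)$ and $t \mapsto X_t(\lambda u)$ yields $X_t(0) = 0$ and $X_t(\lambda u) = \lambda X_t(u)$ for all $\lambda > 0$, and a Gronwall estimate based on $|v_t(u)| \leq C_t |u|$ ensures $X_t(u) \neq 0$ whenever $u \neq 0$. Thus $X_t$ induces a continuous self-map $\bar X_t(\theta) := X_t(\theta)/|X_t(\theta)|$ of $\S^{d-1}$, and $s \mapsto \bar X_s$ is a continuous homotopy from $\id_{\S^{d-1}}$ to $\bar X_t$, so by Definition~\ref{def:degree} the map $\bar X_t$ has topological degree one and is surjective.

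Setting $\rho_{\min} := \min_{\theta}|X_t(\theta)| > 0$ and $\rho_{\max} := \max_{\theta}|X_t(\theta)| < \infty$ (attained by compactness), I would take $r_a(t) := r_a\rho_{\min}$ and $r_b(t) := r_b\rho_{\max}$. The inclusion $K_t \subset \overline{B(0, r_b(t))}$ is immediate: for $u \in K_0$ with $|u| \leq r_b$, $|X_t(u)| = |u|\cdot|X_t(u/|u|)| \leq r_b\rho_{\max}$.

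The main obstacle is establishing the separation of $r_a(t)\S^{d-1}$ from $r_b(t)\S^{d-1}$ by $K_t$, which is nontrivial because $X_t$ need not be injective (particles may merge when $V$ is only semiconvex), so paths cannot be pulled back through $X_t$ directly. My plan is to argue by contradiction via a degree-theoretic lift: given a hypothetical continuous path $\gamma \subset \R^d \setminus K_t$ from $r_a(t)\S^{d-1}$ to $r_b(t)\S^{d-1}$, I would first reduce to the case where $\gamma$ avoids the origin (if $0 \in K_0$ this is automatic since then $0 \in K_t$; otherwise, a small perturbation in a neighborhood of $0$ avoids $0$ while still avoiding the closed set $K_t$), write $\gamma(s) = r(s)\theta(s)$ in polar coordinates, lift the angular factor through the degree-one surjection $\bar X_t$ to a selection $\hat\theta(s) \in \bar X_t^{-1}(\theta(s))$, and set $\tilde\gamma(s) := r(s)\hat\theta(s)/|X_t(\hat\theta(s))|$ so that $X_t(\tilde\gamma(s)) = \gamma(s)$ by 1-homogeneity. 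The disjointness $\gamma \cap K_t = \emptyset$ then implies $\tilde\gamma \cap K_0 = \emptyset$; by prepending and appending radial connector segments at the endpoints (which remain in $\R^d \setminus K_0$ because $X_t$ sends them to radial segments of $\R^d \setminus K_t$ by 1-homogeneity) one obtains a continuous path from $r_a\S^{d-1}$ to $r_b\S^{d-1}$ avoiding $K_0$, contradicting the separation property of $K_0$. The delicate technical point, which I expect to be the hard part, is securing the continuity of the angular lift $\hat\theta$ despite $\bar X_t$ not being a covering map; this is handled by invoking the degree-one property through the homotopy $\bar X_s$, $s \in [0,t]$, and performing the lift along this homotopy rather than in a single step.
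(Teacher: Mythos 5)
The preparatory steps are sound and match the paper's ingredients: the representation $\mu_t=(X_t)_\#\mu_0$ from Lemma~\ref{lem:flowrepresentation}, the identification $\spt\mu_t=X_t(\spt\mu_0)$ via Lemma~\ref{lem:supportpushforward} (note $X_t$ is closed because it is coercive, so no closure is needed), and the homogeneity/Gr\"onwall bound $\vert v_t(u)\vert\leq C\vert u\vert$ keeping trajectories away from the origin; the observation that $X_t(\lambda u)=\lambda X_t(u)$ is a correct extra consequence of $2$-homogeneity. The genuine gap is the core separation step: you propose to take a hypothetical path $\gamma$ avoiding $K_t$ and \emph{lift} its angular part through the induced sphere map $\bar X_t$, justified only by $\bar X_t$ having degree one. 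Degree-one surjections of $\S^{d-1}$ do not admit continuous path lifting in general: since $V$ is only semiconvex the flow has only forward uniqueness (one-sided Lipschitz velocity), so $X_t$ can fold and $\bar X_t$ can be non-injective; preimage branches of a point can then merge and disappear as the parameter varies (already on $\S^1$, a degree-one map that goes forward, backward, then forward gives a path with no continuous lift), so the selection $\hat\theta(s)\in\bar X_t^{-1}(\theta(s))$ you need may simply not exist. Saying the lift is ``handled by invoking the degree-one property through the homotopy $\bar X_s$'' is not an argument—the same folding occurs along the homotopy—and this is exactly the crux of the lemma, so as written the proof does not go through. (The endpoint ``radial connector'' step is also unjustified as stated—the image radial segments need not avoid $K_t$—but that part is fixable by an intermediate-value argument on the radius of the lifted path, if a lift existed.)

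The paper avoids lifting altogether by applying degree theory downstairs, to $X_t$ itself rather than to $\bar X_t$: it sets $A$ to be the intersection of a large ball with the unbounded component of $\R^d\setminus K_0$, and uses homotopy invariance of $(t,y)\mapsto\deg(X_t,A,y)$ on the complement of $\tilde X([0,T]\times\partial A)$ to conclude this degree is $1$ on an outer sphere and $0$ on a small ball around the origin (the latter from the Gr\"onwall control of preimages of small balls). The existence property of the degree then forces any path joining these spheres to meet $X_t(\partial A)$, and $\partial A\subset K_0\cup\beta\S^{d-1}$ localizes the intersection to $X_t(K_0)$. If you want to salvage your scheme, you should replace the lifting argument by this kind of degree/homotopy-invariance argument in $\R^d$ (or some other obstruction-theoretic argument that does not require selecting preimages continuously).
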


Note that this property is generally lost \emph{in the limit} $t\to \infty$. This lemma is a consequence of the following, more abstract proposition, that deals with sets instead of measures. The reader can keep in mind that we will apply this result with $X$ being the flow of the velocity field introduced in Lemma~\ref{lem:flowrepresentation} and $K$ being the support of $\mu_0$.

\begin{proposition}[Set separation, spheres]\label{prop:degreespherical}
Consider a continuous map $X:[0,T]\times \R^d \to \R^d$ such that $X(0,\cdot)=\id$, and such that, for all $\epsilon>0$, there exists $\eta>0$ such that $u\in B(0,\eta)$ implies $X_t^{-1}(u)\subset B(0,\epsilon)$. If $K$ satisfies Property~\ref{ass:separationhomogeneous}, then $X_t(K)$ satisfies the same property for all $t\in [0,T]$.
\end{proposition}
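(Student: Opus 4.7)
The plan is to argue by contradiction, using the homotopy invariance of the topological degree (Definition~\ref{def:degree}(3)) applied to a carefully chosen bounded open set $V \subset \R^d$. Without loss of generality, by perturbing $r_a$ slightly in a way that preserves the separating property, we may assume $K \cap r_a\S^{d-1} = \emptyset$; note also that $K \cap r_b\S^{d-1} = \emptyset$ automatically since $K \subset B(0,r_b)$. Let $U_a$ denote the connected component of $\R^d \setminus K$ containing $r_a\S^{d-1}$ and $U_\infty$ the unbounded component, which contains $r_b\S^{d-1}$; the separating hypothesis forces $U_a \neq U_\infty$. Set $V := \R^d \setminus \overline{U_\infty}$: this is a bounded open set contained in $B(0,r_b)$, with boundary $\partial V = \partial U_\infty \subset K$.

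The key geometric observation is that $B(0,r_a) \subset V$. Indeed, if some $u \in U_\infty$ had $|u| < r_a$, then the open connected set $U_\infty$ would admit a continuous path from $u$ to a point of $r_b\S^{d-1} \subset U_\infty$; the intermediate value theorem applied to the norm along this path would place a point of $r_a\S^{d-1}$ in $U_\infty$, contradicting $r_a\S^{d-1} \subset U_a$. Applying the same argument to closure points of $U_\infty$ yields $\overline{U_\infty} \cap B(0,r_a) = \emptyset$, whence $\partial V \subset \R^d \setminus B(0,r_a)$. The hypothesis of the proposition then gives $X_s(\partial V) \subset X_s(\R^d \setminus B(0,r_a)) \subset \R^d \setminus B(0,\eta_s)$ for some $\eta_s > 0$ at each $s$; continuity of $X$ on the compact set $[0,t] \times \partial V$ upgrades these pointwise bounds into a uniform $\eta > 0$ valid for all $s \in [0,t]$. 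Set $r_a' := \eta/2$ and pick $r_b' > \sup_{s \in [0,t],\, u \in \overline{B(0,r_b)}} |X_s(u)|$, finite by continuity of $X$.

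Now suppose toward contradiction that $X_t(K)$ does not separate $r_a'\S^{d-1}$ from $r_b'\S^{d-1}$, so some continuous path $\gamma : [0,1] \to \R^d \setminus X_t(K)$ joins these spheres. Since $|\gamma(0)| = r_a' < \eta$, $\gamma(0) \notin X_s(\partial V)$ for every $s \in [0,t]$, and homotopy invariance along the $s$-parameter gives
\[
\deg(X_t, V, \gamma(0)) \;=\; \deg(\id, V, \gamma(0)) \;=\; 1,
\]
because $\gamma(0) \in B(0,r_a) \subset V$. Symmetrically, $|\gamma(1)| = r_b' > r_b$ combined with $V \subset B(0,r_b)$ yields $\gamma(1) \notin V$, while the choice of $r_b'$ ensures $\gamma(1) \notin X_s(K) \supset X_s(\partial V)$ for all $s$, hence $\deg(X_t, V, \gamma(1)) = 0$. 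Since $\gamma$ avoids $X_t(K) \supset X_t(\partial V)$, the integer-valued map $r \mapsto \deg(X_t, V, \gamma(r))$ is locally constant on the connected interval $[0,1]$ by Definition~\ref{def:degree}(3), hence constant, forcing $1 = 0$, the desired contradiction.

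The main obstacle is the geometric claim $B(0,r_a) \subset V$, which is where the separating property of $K$ is used in an essential way; once that is in place, the remainder is a fairly direct application of the invariance properties of the topological degree, both along the homotopy $s \mapsto X_s$ and along the continuous variation of the target point $\gamma(r)$.
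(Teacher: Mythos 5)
Your argument follows essentially the same route as the paper's proof: a topological-degree computation on a bounded open set whose boundary is carried into $X_t(K)$, homotopy invariance in the time variable to pin the degree at $1$ near the origin and at $0$ far away, and invariance of the degree along a hypothetical connecting path to reach a contradiction. (The paper takes the open set to be $B(0,\beta)$ intersected with the unbounded component $U_\infty$ of $\R^d\setminus K$, while you take the complementary set $V=\R^d\setminus\overline{U_\infty}$; your choice is slightly tidier since $\partial V\subset K$ with no extra spherical boundary piece to handle, but the mechanism is identical.) The degree bookkeeping itself is correct.

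Two steps are, however, not valid as literally written, though both are repairable with material already in your write-up. First, the opening reduction ``WLOG $K\cap r_a\S^{d-1}=\emptyset$ after slightly perturbing $r_a$'' is false in general: take $K=\overline{B(0,r_a)}$, which satisfies Property~\ref{ass:separationhomogeneous} (any path starting on $r_a\S^{d-1}$ starts in $K$), yet every sphere of radius $r'\le r_a$ meets $K$ and no sphere of radius $r'>r_a$ is separated from $r_b\S^{d-1}$ by $K$, so no perturbation achieves both disjointness and separation. Fortunately the reduction is unnecessary: its only role is to prove $U_\infty\cap B(0,r_a)=\emptyset$, and this follows directly from separation. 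Indeed, if $u\in U_\infty$ with $\vert u\vert<r_a$, your path inside $U_\infty$ from $u$ to a point of $r_b\S^{d-1}$ passes, by the intermediate value theorem, through a point of $r_a\S^{d-1}$ lying in $U_\infty$; the tail of that path is then a path from $r_a\S^{d-1}$ to $r_b\S^{d-1}$ avoiding $K$, contradicting Property~\ref{ass:separationhomogeneous} directly, with no need for the auxiliary component $U_a$. Second, the justification ``$\gamma(0)\in B(0,r_a)\subset V$'' requires $\eta/2<r_a$, which your compactness construction of $\eta$ does not guarantee (e.g.\ if $K$ is an annulus far from the origin, $\dist(0,\partial V)$ --- and hence an admissible $\eta$ --- exceeds $r_a$). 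Either replace $\eta$ by $\min\{\eta,r_a\}$, which is still an admissible bound, or note that the $s=0$ instance of your estimate gives $B(0,\eta)\cap\partial V=\emptyset$, so the connected ball $B(0,\eta)$, which contains $0\in V$, lies entirely in $V$, whence $\gamma(0)\in V$ in any case. With these two repairs (plus the routine remark that $X_t(K)$ is compact, hence closed, and contained in $B(0,r_b')$ by the choice of $r_b'$), the proof is complete and matches the paper's.
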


\begin{proof}
Let $0<\epsilon<\alpha<\beta$ be such that $X_t(K) \subset B(0,\alpha-\epsilon)$ and $ B(0,\alpha+\epsilon) \subset X_t(B(0,\beta))$ for all $t\in [0,T]$ and let $A$ be the intersection of $B(0,\beta)$ with the (unique) unbounded connected component of $\R^d\setminus K$. Consider the function $\tilde X : (t,x)\mapsto (t,X_t(x))$ and the set $S = \tilde X ([0,T]\times \partial A)$ which is a compact subset of $[0,T]\times \R^d$. Since connected components of $S^c$ (the complement of $S$ in $[0,T]\times \R^d$) are path connected, recalling Definition~\ref{def:degree}, it follows that 
\[
(t,x) \mapsto \deg(X_t,A,x) 
\] 
is constant on each connected component of $S^c$. Moreover, this degree equals $1$ if the connected component intersects $\{0\}\times A$ and $0$ if it intersects $\{0\}\times (\R^d\setminus A)$. In particular, this degree is $1$ on $[0,T]\times \alpha \S$ and, by the assumptions on $K$ and $X$, there is a small tube $[0,T]\times B(0,\eta)$ where the above degree is $0$.  
So for a fixed $t\in[0,T]$, any path joining $(\eta/2)\S^{d-1}$ to $\alpha \S^{d-1}$ must intersect $X_t(\partial A)$. We restrict our attention to paths confined in $B(0,\alpha)$ and it remains to notice that $\partial A \subset K \cup \beta \S$ and so 
\[
X_t(\partial A) \cap B(0,\alpha) \subset (X_t(K) \cup X_t(\beta \S))\cap B(0,\alpha) = X_t(K) 
\]
This shows that any path joining $(\eta/2)\S$ to $\alpha \S$ must intersect $X_t(K)$.
\end{proof}

\begin{proof}[Proof of Lemma~\ref{lem:preservedhomogeneous}]
Consider the continuous flow $X:[0,T]\times \R^d\to \R^d$  introduced in Lemma~\ref{lem:flowrepresentation}. For all $t\in [0,T]$, the map $X_t$ is coercive and thus closed, so Lemma~\ref{lem:supportpushforward} applies and gives $\spt ((X_t)_\# \mu_0) = X_t(\spt \mu_0)$. We just have to check the assumption of Proposition~\ref{prop:degreespherical} concerning the stability of the inverse map of $X_t$ near $0$. Since $\Phi$ and $V$ are $2$-homogeneous, we have that $d\Phi$ and $\partial V$ that are $1$-homogeneous and thus, there exists a constant $C>0$ such that $\vert v_t(u)\vert \leq C \vert u\vert$ for all $t \in [0,T]$ and $u\in \R^d$. This upper bound on the velocity implies in particular that if $u_0$ is at a distance $\vert u_0\vert$ from $0$, then it is at least at a distance $\vert u_0\vert \exp(-Ct)$ for $t\in [0,T]$.
\end{proof}

\subsubsection{The partially $1$-homogeneous case}
Here are the analogous separation property and stability lemma for the partially $1$-homogeneous case.
\begin{property}[Separation, partially $1$-homogeneous case]\label{ass:separationbounded}
$K$ is a closed set contained in a box $Q_r := [-r,r]\times \Theta$ and separates $\{-r\}\times \Theta$ from $\{r\}\times \Theta$ for some $r>0$ (in the ambiant space $\Omega = \R \times \Theta$).
\end{property}
\begin{lemma}[Stability of the separation property]\label{lem:preservedbounded}
Under Assumptions~\ref{ass:bounded}, let $(\mu_t)_{t\geq 0}$ be a Wasserstein gradient flow of $F$. If the support of $\mu_0$ satisfies Property~\ref{ass:separationbounded}, then so does the support of $\mu_t$, for all $t>0$.
\end{lemma}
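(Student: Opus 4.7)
The plan is to mirror the proof of Lemma~\ref{lem:preservedhomogeneous} by establishing the appropriate slab-geometry analogue of Proposition~\ref{prop:degreespherical}. Let $X:[0,T]\times\Omega\to\Omega$ denote the flow of the velocity field from Lemma~\ref{lem:flowrepresentation}; under Assumptions~\ref{ass:bounded}, $X_t$ is Lipschitz and coercive on each slab $Q_r$, hence closed, so Lemma~\ref{lem:supportpushforward} yields $\spt\mu_t = X_t(\spt\mu_0)$. The key quantitative ingredient is that since $\phi$, $\tilde V$, and $R'(\int\Phi\,d\mu_t)$ all remain bounded along the flow (the last by $F$-decrease and Assumption~\ref{ass:bounded}-(i)), the $w$-component of the velocity is globally bounded by some $M$, while the $\theta$-component satisfies $|v_\theta(w,\theta)|\leq C|w|$ on bounded slabs. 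Consequently trajectories displace by at most $Mt$ in the $w$-direction, so $X_t(Q_r)\subset Q_{r+MT}$, and I take $r_t:=r+MT$ as the candidate parameter for Property~\ref{ass:separationbounded} at time $t$.

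Next I would state and prove the following slab analogue of Proposition~\ref{prop:degreespherical}: if $X$ is a continuous $\Omega$-valued flow with $X_0=\id$ having bounded $w$-displacement and $\theta$-velocity linear in $|w|$ on bounded slabs, and if $K$ satisfies Property~\ref{ass:separationbounded} with parameter $r$, then $X_t(K)$ does so with parameter $r_t$. Fix any continuous path $\gamma:[0,1]\to\Omega$ from $\{-r_t\}\times\Theta$ to $\{r_t\}\times\Theta$; since $\gamma([0,1])$ is compact, its $\theta$-projection lies in some ball $B(0,R_0)$. Using the $\theta$-velocity bound, enlarge $R\geq R_0$ so that every trajectory starting in $Q_{r_t+1}\cap(\R\times B(0,R))$ stays inside $\R\times(\Theta\cap B(0,R))$ throughout $[0,T]$---automatic in case (b) via the Neumann conditions and possible in case (a) since $|v_\theta|\leq C(r_t+1)$ on the slab. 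Define the bounded open set $D:=(-r_t-1,r_t+1)\times\mathrm{int}(\Theta\cap B(0,R))$ and let $A^+$ be the connected component of $D\setminus K$ containing the slice of $\{r\}\times\Theta$ inside $D$, with $A^-$ the analogous component on the other side of $K$; by the separation hypothesis, $A^+$ and $A^-$ are disjoint.

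The topological degree argument then proceeds exactly as in the spherical case: with $\tilde X(t,u)=(t,X_t(u))$ and $S:=\tilde X([0,T]\times\partial A^+)$, homotopy invariance (Definition~\ref{def:degree}) shows that $(t,x)\mapsto\deg(X_t,A^+,x)$ is constant on connected components of $S^c$, equal to $1$ on the component meeting $\{0\}\times A^+$ and $0$ on that meeting $\{0\}\times(\Omega\setminus\overline{A^+})$. The $w$-bound ensures that throughout $[0,T]$ the hyperplane $\{r_t\}\times(\Theta\cap B(0,R))$ stays in the ``$+$'' component and $\{-r_t\}\times(\Theta\cap B(0,R))$ in the ``$-$'' component, so $\gamma$ must cross $S$ at time $t$; since $\partial A^+\subset K\cup\bigl(\{\pm(r_t+1)\}\times\Theta\bigr)\cup\bigl([-r_t-1,r_t+1]\times\partial(B(0,R)\cap\Theta)\bigr)$ and the last two pieces are pushed by $X_t$ outside the reach of $\gamma$ thanks to the buffer ``$+1$'' and the choice of $R$, the crossing must occur in $X_t(K)$, concluding. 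The main obstacle is the noncompactness of $\Theta$ in case (a), handled by the $\gamma$-dependent truncation to $B(0,R)$; every other step is a direct transcription of the spherical proof, and in case (b) the Neumann conditions make this truncation unnecessary.
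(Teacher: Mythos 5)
Your route is genuinely different from the paper's in the unbounded case, and the difference is worth noting. The paper proves a box-separation result for \emph{bounded} $\Theta$ only (its Proposition~\ref{prop:setbounded}), whose hypothesis is that the lateral boundary $\R\times\partial\Theta$ is invariant under the flow; the case $\Theta=\R^{d-1}$ is then handled by compactifying through $(w,\theta)\mapsto\big(w,(\theta/\vert\theta\vert)\tanh\vert\theta\vert\big)$ and using the $C^1$-limit at infinity from Assumption~\ref{ass:bounded}-(iii)-(a) to extend the velocity field to the boundary sphere with vanishing $\theta$-component, which makes that boundary invariant. Your path-dependent truncation to a large ball, justified by the finite-speed bounds $\vert v^w\vert\leq M$ and $\vert v^\theta\vert\leq C\vert w\vert$ (both valid along the flow since $F(\mu_t)$ decreases and $dR$ is bounded on sublevel sets), bypasses the compactification and does not use the boundary condition at infinity for this lemma, which is a legitimate and arguably more elementary argument for case (a). In case (b) you are essentially reproducing the paper's proposition and still need the Neumann condition; note that you must then also deal with test paths touching $\R\times\partial\Theta$ (the paper proves separation in $\R\times\mathrm{int}\,\Theta$ first and extends to $\R\times\Theta$ using that $X_t(K)$ is closed and $\Theta$ convex), a point you pass over.

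Two steps, however, fail as written. First, no enlargement of $R$ can make \emph{every} trajectory starting in $Q_{r_t+1}\cap(\R\times B(0,R))$ stay inside $B(0,R)$: points starting near $\vert\theta\vert=R$ can exit no matter how large $R$ is. What you actually need, and what the speed bound does give, is weaker: trajectories issued from the lateral face $\vert\theta\vert=R$ satisfy $\vert\theta\vert\geq R-C(r_t+1+MT)T$ at all times (the constant must account for the $w$-drift, so $C(r_t+1+MT)$ rather than your $C(r_t+1)$), hence $R>R_0+C(r_t+1+MT)T$ keeps their images away from the $\theta$-range of $\gamma$. Second, the buffer ``$+1$'' does not push the horizontal faces $\{\pm(r_t+1)\}\times\cdots$ out of the reach of $\gamma$: under $X_s$ their $w$-coordinate can drop to $r_t+1-MT=r+1\leq r_t$, inside the slab; worse, an arbitrary path from $\{-r_t\}\times\Theta$ to $\{r_t\}\times\Theta$ can have arbitrarily large $\vert w\vert$, so ``out of reach'' cannot hold for any buffer unless you first reduce, by a last-exit/first-entry subpath argument, to paths confined to $[-r_t,r_t]\times\Theta$. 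Both defects are repairable: restrict to confined paths, place the horizontal faces at height strictly larger than $r+2MT$, and take the new box parameter strictly larger than $r+MT$ (say $r+MT+1$) so that $X_s(K)$ never touches the walls $\{\pm r_t\}\times\Theta$; this last margin is also needed in your homotopy step, where the degree at the endpoints of $\gamma$ must be well defined for every $s\in[0,t]$, not only at $s=t$. With these corrected constants your degree argument goes through as described.
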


Similarly as above, we first prove an abstract topological result.
\begin{proposition}[Set separation, boxes]\label{prop:setbounded}
Let $\Theta\subset \R^d$ be the closure of a bounded, connected, open set and, for some $T>0$, let $X:[0,T]\times (\R \times \Theta) \to \R \times \Theta$  be a continuous map such that $X(0,\cdot)=\id$ and $X_t(\R \times \partial \Theta) \subset \R\times \partial \Theta$ for all $t\in [0,T]$. If $K$ satisfies Property~\ref{ass:separationbounded}, then $X_t(K)$ satisfies Property~\ref{ass:separationbounded} for all $t\in [0,T]$.
\end{proposition}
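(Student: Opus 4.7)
My plan is to mirror the degree-theoretic proof of Proposition~\ref{prop:degreespherical}, with coaxial ``boxes'' $[-R,R]\times\Theta$ replacing the concentric balls $B(0,\beta)$. The coercivity of $X_t$ near $0$ used there is replaced here by the invariance $X_t(\R\times\partial\Theta)\subset\R\times\partial\Theta$, which plays the analogous role of preventing the lateral wall of the tube $\R\times\Theta$ from leaking into its interior.

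First I fix the scale. By continuity of $X$ and compactness of $[0,T]\times K$ and $[0,T]\times\{R'\}\times\Theta$, I choose $R'>r$ so that $X_t(K)\subset(-R',R')\times\Theta$ for all $t\in[0,T]$ and the compact image $X([0,T]\times\{R'\}\times\Theta)$ has first coordinate strictly inside $(-R',R')$. Inside the box $D=[-R',R']\times\Theta$, let $B$ be the union of those connected components of $D\setminus K$ that meet $\{R'\}\times\Theta$. By Property~\ref{ass:separationbounded}, $B$ does not touch $\{-R'\}\times\Theta$, so $A:=B\cap((-R',R')\times\mathrm{int}(\Theta))$ is a bounded open subset of $\R^d$ whose boundary is contained in $K\cup(\{R'\}\times\Theta)\cup(\R\times\partial\Theta)$. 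Each $X_t$ I extend to a continuous $\tilde X_t:\R^d\to\R^d$ through the metric projection $P$ onto the closed convex set $\R\times\Theta$, for instance by $\tilde X_t(y):=X_t(P(y))+(y-P(y))$; this yields $\tilde X_0=\mathrm{id}$ and still sends $\R\times\partial\Theta$ into itself.

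The degree argument then proceeds as in the spherical case. By the homotopy invariance of the topological degree (Definition~\ref{def:degree}) on the complement of $\tilde X([0,T]\times\partial A)$ in $[0,T]\times\R^d$, the integer $\deg(\tilde X_t,A,x)$ is locally constant; at $t=0$ it equals $1$ on $A$ and $0$ on $\R^d\setminus\bar A$. For any $\theta_0\in\mathrm{int}(\Theta)$ and small $\epsilon>0$, the test points $x_+=(R'-\epsilon,\theta_0)\in A$ and $x_-=(-R'-1,\theta_0)\notin\bar A$ stay outside $\tilde X_t(\partial A)$ for every $t\in[0,T]$, by our choices of $R'$ and by the boundary preservation of $\tilde X_t$, so their degrees remain $1$ and $0$ respectively throughout the homotopy. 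If at some $t$ a continuous path in $(\R\times\Theta)\setminus X_t(K)$ were to join $\{-R'-1\}\times\Theta$ and $\{R'\}\times\Theta$, a small perturbation using $\mathrm{int}(\Theta)\neq\emptyset$ would make it stay in $\R\times\mathrm{int}(\Theta)$ and pass through $x_-$ and $x_+$; such a path would have to cross $\tilde X_t(\partial A)\subset X_t(K)\cup X_t(\{R'\}\times\Theta)\cup(\R\times\partial\Theta)$, but the perturbation excludes $\R\times\partial\Theta$, the hypothesis excludes $X_t(K)$, and the choice of $R'$ confines $X_t(\{R'\}\times\Theta)$ away from the region traversed by the path, a contradiction.

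The principal obstacle will be keeping track of the image $X_t(\{R'\}\times\Theta)$, which unlike in the cleanly coercive spherical case is not pinned down by the hypotheses of the proposition; handling it requires the a priori uniform choice of $R'$ described above, together with a careful approximation argument to detach the test path from the lateral boundary $\R\times\partial\Theta$ without crossing $X_t(K)$.
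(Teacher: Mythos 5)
Your overall strategy is the paper's: build a bounded open set $A$ from the ``right-hand'' components of the complement of $K$, note that $\partial A$ is contained in $K$, an outer wall, and the lateral boundary $\R\times\partial\Theta$, and run homotopy invariance of the topological degree using $X_0=\id$ and the hypothesis $X_t(\R\times\partial\Theta)\subset\R\times\partial\Theta$. The genuine gap is in your treatment of the outer wall $\{R'\}\times\Theta$, in two respects. (a) The hypotheses give no control on $X_t$ away from $K$, so the requirement that the compact image $X([0,T]\times\{R'\}\times\Theta)$ have first coordinate strictly inside $(-R',R')$ cannot be obtained from continuity and compactness: for $X_t(w,\theta)=(w+t\,w^2,\theta)$, which satisfies all the hypotheses, the image of the wall $\{R'\}\times\Theta$ has first coordinate $R'+tR'^2>R'$ for every $R'>0$ and $t>0$, so no such $R'$ exists. (b) More importantly, even granting such an $R'$, this is not the property your argument needs: confining $X_t(\{R'\}\times\Theta)$ to $(-R',R')\times\Theta$ places it precisely in the region swept by your test path from $x_-=(-R'-1,\theta_0)$ to $x_+=(R'-\epsilon,\theta_0)$, and nothing prevents it from containing $x_+$ itself. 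Hence neither the claim that $x_\pm$ avoid $\tilde X_t(\partial A)$ for all $t$ (needed to keep the degrees equal to $1$ and $0$ along the homotopy) nor your final step ``the choice of $R'$ confines $X_t(\{R'\}\times\Theta)$ away from the region traversed by the path'' is justified, and the contradiction does not follow. What the argument actually requires is the opposite kind of control: pick $\alpha$ exceeding the $w$-extent of the compact set $X([0,T]\times K)$ and a wall level $\beta>\alpha$ whose image under every $X_t$ stays out of the slab $[-\alpha,\alpha]\times\mathrm{int}\,\Theta$ (i.e.\ remains to the right of it); this is how the paper organizes its choice of $\alpha<\beta$, and its proof culminates in the inclusion $X_t(\partial A)\cap([-\alpha,\alpha]\times\mathrm{int}\,\Theta)\subset X_t(K)$. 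In the intended application this wall control is available because the flow of Lemma~\ref{lem:flowrepresentation} has uniformly bounded $w$-velocity on compact time intervals, but it must be stated and used in that form; your version both points the wrong way and is unobtainable in general.

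Two smaller points. Your extension $\tilde X_t$ via the metric projection onto $\R\times\Theta$ requires $\Theta$ to be convex, which Proposition~\ref{prop:setbounded} does not assume (only closure of a bounded connected open set); no extension is needed anyway, since the degree $\deg(\cdot,A,y)$ depends only on the values of the map on $\overline A\subset\R\times\Theta$. Finally, your conclusion should be phrased as separation of $\{-\rho\}\times\Theta$ from $\{\rho\}\times\Theta$ at a level $\rho$ for which $X_t(K)\subset[-\rho,\rho]\times\Theta$ (so that Property~\ref{ass:separationbounded} is literally verified); this mismatch between $-R'-1$ and $R'$ is harmless, but worth fixing once the wall issue is repaired.
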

\begin{proof}
Let $0<\epsilon<\alpha<\beta$ be such that $X_t(K) \subset {]-\alpha-\epsilon,\alpha+\epsilon[}\times \Theta$ and $[-\alpha,\alpha]\times \Theta \subset X_t( {]-\beta-\epsilon,\beta+\epsilon[}\times \Theta)$ for all $t\in [0,T]$, and let $A$ be  the intersection of $]-\beta,\beta[\times \Theta$ with the (unique) connected component of $(\R\times \Theta)\setminus K$ that contains $\{\alpha\} \times \Theta$. The set $A$ is bounded and open in $\R \times \R^{d-1}$. Consider the function $\tilde X : (t,x)\mapsto (t,X_t(x))$ and the set $S = \tilde X ([0,T]\times \partial A)$ which is a compact subset of $[0,T]\times (\R\times \Theta)$. Since connected components of $S^c$ (the complement of $S$ in $[0,T]\times (\R\times \Theta)$) are path connected, recalling Definition~\ref{def:degree}, it follows that 
\[
(t,(w,\theta)) \mapsto \deg(X_t,A,(w,\theta))
\] 
is constant on each connected component of $S^c$. Moreover, this degree is $1$ on $[0,T]\times(\{\alpha\} \times \Theta)$ and is $0$ on $[0,T]\times(\{-\alpha\} \times \Theta)$. So for a fixed $t\in [0,T]$, any path joining $\{-\alpha\} \times \Theta$ to $\{\alpha\} \times \Theta$ must intersect $X_t(\partial A)$. It is in particular true for paths entirely contained in ${[-\alpha,\alpha]}\times \mathrm{int}\, \Theta$. It remains to notice that $\partial A \subset K \cup (\R \times \partial \Theta) \cup (\{\beta\} \times \Theta)$ and so, thanks to our assumption on $X$,
\[
X_t(\partial A) \cap ({[-\alpha,\alpha]}\times \mathrm{int}\, \Theta) \subset X_t(K).
\]
This shows that $X_t(K)$ separates $\{-\alpha\} \times \Theta$ from $\{\alpha\} \times \Theta$ in $\R \times \mathrm{int}\, \Theta$ and in fact also in $\R\times\Theta$ because $X_t(K)$ is closed.
\end{proof}

\begin{proof}[Proof of Lemma~\ref{lem:preservedbounded}]
Let $X$ be the flow of the velocity fields introduced in Lemma \ref{lem:flowrepresentation}. It is continuous and satisfies $\mu_t = (X_t)_\# \mu_0$. Moreover, $X_0=\id$ and $X_t$ is closed because it is coercive.  We have to deal with two cases: when $\Theta$ is bounded and when $\Theta= \R^{d-1}$. In the first case, by Lemma~\ref{lem:supportpushforward}, it is sufficient so verify the assumptions of Proposition~\ref{prop:setbounded}. This reduces to making sure that $X_t(\R \times \partial \Omega)\subset \R \times \partial \Omega$, which is guaranteed by the Neumann boundary conditions. 
For the unbounded case, we bring ourselves back to the bounded case, by means of the diffeomorphism $\psi: \R\times \R^d \to \R \times \mathrm{int}\, B(0,1)$ defined by $\psi (w,\theta)=(w,(\theta/\vert \theta\vert)\cdot \tanh \vert \theta \vert)$ if $\theta\neq 0$ and $\psi(w,0)=(w,0)$ otherwise. Let $Y_t := \Psi \circ X_t \circ \Psi^{-1}$ be the flow where the second variable is mapped to the open unit ball. By direct calculus, one sees that $Y_t$ is the flow of the velocity field $\tilde v_t(y) = d\psi_{\psi^{-1}(y)} (v_t \circ \psi^{-1}(y))$ defined on $\R \times \mathrm{int}\, B(0,1)$ and that can be extended by continuity on $\R \times \S^{d-2}$ by $(g_\infty(\theta)\cdot \sign w,0)$ where $g_\infty$ is the limit which existence is assumed in Assumption~\ref{ass:bounded}-\eqref{subass:boundary} and, here, $\sign 0 = 0$. As $Y_t$ satisfies the properties of Proposition~\ref{prop:setbounded}, the conclusion of this proposition holds for the set $\psi (\spt \mu_t) = \psi \circ X_t (\spt \mu_0)$. Since $\psi$ is a diffeomorphism, it preserves connectedness properties and Lemma~\ref{lem:preservedbounded} is proved.
\end{proof}

\subsection{Main theorems: proofs and generalization}\label{app:mainproof}

First, let us state a lemma that relates the convergence of the Wasserstein gradient flows to an \emph{asymptotic property}  for the classical gradient flows, when $m,t \to \infty$. This result is used in the last claims of Theorems~\ref{th:mainhomogeneous} and~\ref{th:mainbounded}.

\begin{lemma}
Under Assumptions~\ref{ass:regularity}, let  $(\mu_t)$ be a Wasserstein gradient flow which initialization is concentrated on a set $Q_{r_0}$ and such that $F(\mu_t)\to F^*$. If $(\mu_{0,m})_m$ is a sequence of measures concentrated on a set $Q_{r_0}$ that converges to $\mu_0$ in $W_2$, then 
\[
F^* = \lim_{t \to \infty} \lim_{m\to \infty} F(\mu_{m,t}) = \lim_{m \to \infty} \lim_{t\to \infty} F(\mu_{m,t}). 
\]
\end{lemma}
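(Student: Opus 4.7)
The plan is to argue each iterated limit separately, using only the continuity of $F$ for $W_2$ (Lemma~\ref{lem:Fcontinuous}), the many-particle convergence (Theorem~\ref{th:manyparticle}), and the monotonicity of $t \mapsto F(\mu_{m,t})$ along the gradient flow.

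First I would treat the order $\lim_{t\to\infty}\lim_{m\to\infty}$. Fix $t\geq 0$. By Theorem~\ref{th:manyparticle}, the sequence $(\mu_{m,t})_m$ converges to $\mu_t$ in $W_2$ as $m\to\infty$, since the initializations $(\mu_{m,0})_m$ are concentrated in the same set $Q_{r_0}$ and converge to $\mu_0$ in $W_2$. Lemma~\ref{lem:Fcontinuous} then gives $\lim_{m\to\infty} F(\mu_{m,t}) = F(\mu_t)$. Taking $t\to\infty$ and using the assumption $F(\mu_t)\to F^*$ yields $\lim_{t\to\infty}\lim_{m\to\infty}F(\mu_{m,t}) = F^*$.

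For the reverse order, I would first note that for each fixed $m$, $t\mapsto F(\mu_{m,t})$ is non-increasing (by Proposition~\ref{eq:classicalGF}, which gives $\tfrac{d}{dt}F_m(\mathbf u(t)) = -\tfrac{1}{m}|\mathbf u'(t)|^2 \leq 0$) and bounded below by $F^*\geq 0$. Hence $\ell_m := \lim_{t\to\infty} F(\mu_{m,t})$ exists in $[F^*,\infty)$. By monotonicity, $\ell_m \leq F(\mu_{m,t})$ for every $t\geq 0$. Taking the $\limsup$ in $m$ (with $t$ fixed) and using the first step, I obtain
\[
\limsup_{m\to\infty} \ell_m \;\leq\; \limsup_{m\to\infty} F(\mu_{m,t}) \;=\; F(\mu_t)
\qquad\text{for every } t\geq 0.
\]
Sending $t\to\infty$ gives $\limsup_m \ell_m \leq F^*$, and combined with $\ell_m \geq F^*$ this forces $\lim_{m\to\infty}\ell_m = F^*$, i.e.\ $\lim_{m\to\infty}\lim_{t\to\infty}F(\mu_{m,t}) = F^*$.

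There is no real obstacle here: the interchange is not delicate because we already have continuity of $F$ for $W_2$ along the many-particle convergence, and the temporal monotonicity along the gradient flow converts pointwise-in-$t$ bounds into bounds on the $t\to\infty$ limit automatically. The only point to double-check is that $F$ is indeed bounded below, which follows from $R\geq 0$ and $V\geq 0$ in Assumptions~\ref{ass:regularity} so that $F\geq 0 \geq -\infty$, and that the assumption ``$\mu_{0,m}$ concentrated on $Q_{r_0}$ and converging to $\mu_0$ in $W_2$'' is exactly what Theorem~\ref{th:manyparticle} needs to produce the inner convergence uniformly usable at each $t$.
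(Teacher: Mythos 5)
Your proof is correct and follows essentially the same route as the paper: the inner limit $m\to\infty$ at fixed $t$ is handled by Theorem~\ref{th:manyparticle} together with the $W_2$-continuity of $F$ (Lemma~\ref{lem:Fcontinuous}), and the other order is obtained from the monotonicity of $t\mapsto F(\mu_{m,t})$ combined with the convergence at a fixed time. Your $\limsup$ formulation is just a rephrasing of the paper's $\epsilon$--$t_0$ argument, so there is nothing further to add.
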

\begin{proof}
The first double limit where $m$ goes first to $\infty$ is a consequence of Theorem~\ref{th:manyparticle} combined with the continuity of $F$ for the Wasserstein metric, proved in Lemma~\ref{lem:Fcontinuous}. The other double limit is obtained by the mononicity of $F(\mu_t)$ along Wasserstein gradient flows. Indeed, for all $\epsilon>0$, there exists $t_0\in \R_+$ such that $F(\mu_{t_0})<F^*+\epsilon/2$ and by Theorem~\ref{th:manyparticle}, there is $m_0\in \N$ such that for all $m\geq m_0$, $F(\mu_{t_0,m})<F(\mu_{t_0})+\epsilon/2$. Since $t\mapsto F(\mu_{m,t})$ is decreasing and lower bounded for all $m\in\N$ , it follows
\[
\forall m>m_0,\; \lim_{t\to \infty} F(\mu_{m,t}) \leq F(\mu_{m,t_0}) < F^*+\epsilon
\]
which proves the second limit.
\end{proof}

\subsubsection{The $2$-homogeneous case}

\begin{theorem}\label{th:cvgce2}
Under the assumptions of  Theorem~\ref{th:mainhomogeneous}, if $h^2(\mu_t)$ converges weakly, then its limit is a global minimizer of $F$ over $\M_+(\Omega)$ and $\lim_{t\to \infty} F(\mu_t)= F^*$.
\end{theorem}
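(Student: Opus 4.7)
The plan is to write $\nu \in \M_+(\S^{d-1})$ for the weak limit of $h^2(\mu_t)$, viewed also as a measure on $\Omega = \R^d$ concentrated on $\S^{d-1}$, and to verify the two optimality conditions of Proposition~\ref{prop:stationnary} for $\nu$ before transferring the convergence to the objective values. The vanishing condition $F'(\nu)(u) = 0$ for $\nu$-a.e.\ $u$ will be provided directly by Lemma~\ref{lem:cvgcev2}, so the real work is to establish $F'(\nu) \geq 0$ everywhere; I will argue this by contradiction.

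Suppose $F'(\nu)$ attains a negative value on $\Omega$. Proposition~\ref{prop:escape2} applied at $\mu = \nu$ then supplies constants $\epsilon, \eta > 0$ and a conical set $A = \{r\theta : r > 0,\ \theta \in K\}$, where $K$ is the $(-\eta)$-sublevel set of the restriction of $F'(\nu)$ to $\S^{d-1}$, with $\eta$ a regular value so that $K$ has nonempty relative interior. Two ingredients will give the contradiction. First, since $\S^{d-1}$ is compact and the nonnegative measures $h^2(\mu_t)$ converge weakly to $\nu$ (with total masses converging by taking the test function $1$), the convergence actually holds in bounded-Lipschitz norm, so I can pick $t_0 \geq 0$ with $\|h^2(\mu_t) - \nu\|_{\mathrm{BL}} < \epsilon/2$ for every $t \geq t_0$. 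Second, Lemma~\ref{lem:preservedhomogeneous} preserves the separation property, so $\spt \mu_{t_0}$ still separates $r_a \S^{d-1}$ from $r_b \S^{d-1}$ inside $B(0,r_b)$. Fixing any $\theta_0$ in the relative interior of $K$, the segment $r \in [r_a,r_b] \mapsto r\theta_0$ is then a continuous path between the two spheres and must meet $\spt \mu_{t_0}$ at some point $r^* \theta_0$; since $\theta_0 \in \mathrm{int}_{\S^{d-1}}(K)$ and $r^* > 0$, this point lies in the topological interior of $A$, forcing $\mu_{t_0}(A) > 0$. Proposition~\ref{prop:escape2} then delivers some $t_1 > t_0$ with $\|h^2(\mu_{t_1}) - \nu\|_{\mathrm{BL}} \geq \epsilon$, contradicting the choice of $t_0$. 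Hence $F'(\nu) \geq 0$ on $\Omega$, and combined with the vanishing on $\spt \nu$, Proposition~\ref{prop:stationnary} identifies $\nu$ as a global minimizer of $F$ over $\M_+(\Omega)$.

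The main obstacle I anticipate is the interplay between three different notions of closeness: weak convergence is what the hypothesis gives, bounded-Lipschitz control is what drives the escape mechanism, and strict positivity of $\mu_{t_0}(A)$ is a support condition that must be extracted from a purely topological separation assertion. The delicate part is arranging that the intersection point of the radial segment with $\spt \mu_{t_0}$ lies in the topological interior of $A$ rather than just in $A$; this is what forces the choice of $\theta_0$ in the relative interior of $K$ and the use of $\eta$ as a regular value (so that the sublevel set is not lower-dimensional).

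For the final claim $F(\mu_t) \to F^*$, 2-homogeneity of $\Phi$ and $V$ will rewrite
\[
F(\mu_t) = R\Bigl(\tint_{\S^{d-1}} \Phi\, d h^2(\mu_t)\Bigr) + \tint_{\S^{d-1}} V\, d h^2(\mu_t),
\]
with integrands continuous and bounded on the compact sphere. Weak convergence $h^2(\mu_t) \weakto \nu$ together with continuity of $R$ on $\F$ then yields $F(\mu_t) \to F(\nu) = F^*$.
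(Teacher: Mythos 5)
Your proposal is correct and follows essentially the same route as the paper's proof: Lemma~\ref{lem:cvgcev2} for the vanishing condition, a contradiction via Proposition~\ref{prop:escape2} combined with the preserved separation property (Lemma~\ref{lem:preservedhomogeneous}) to get $F'(\nu)\geq 0$, then Proposition~\ref{prop:stationnary} and continuity of the objective under weak convergence of $h^2(\mu_t)$ for the last claim. The only difference is that you spell out details the paper compresses (choosing $\theta_0$ in the relative interior of $K$ so the radial path meets $\spt\mu_{t_0}$ inside the interior of $A$, and rewriting $F(\mu_t)$ via $2$-homogeneity on the sphere), which is a faithful elaboration rather than a different argument.
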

This statement is stronger than Theorem~\ref{th:mainhomogeneous}: indeed, if $\mu_t$ converges for the Wasserstein metric, then $h^2(\mu_t)$ converges weakly (but the converse is generally not true).
\begin{proof}
Let $\nu \in \M_+(\S^{d-1})$ be the weak limit of $h^2(\mu_t)$. By Lemma~\ref{lem:cvgcev2}, $F'(\nu)$ vanishes $\nu$-a.e. For the sake of contradiction, assume that $\nu$ is not a minimizer of $F$ over $\M_+(\Omega)$: this implies that $F'(\nu)$ is not nonnegative. Let $A\subset \Omega$ and $B_\BL\subset \M(\S^{d-1})$ be the set and the $\Vert \cdot \Vert_\BL$-ball provided by Proposition~\ref{prop:escape2}. As $h^2(\mu_t)$ converges weakly, there exists $t_0>0$ such that for all $t>t_0$, $h^2(\mu_t)\in B_\BL$. But by Lemma~\ref{lem:preservedhomogeneous}, $\mu_{t_0}(A)>0$ and, by Proposition~\ref{prop:escape2}, there exists $t_1>t_0$ such that $\mu_{t_1} \notin B_\BL$, which is a contradiction so $\nu$ is minimizer of $F$ over $\M_+(\Omega)$. The second claim is a consequence of the continuity of $F$ (Lemma~\ref{lem:Fcontinuous}).
\end{proof}

\subsubsection{The partially $1$-homogeneous case}
Again, we prove a statement in terms of the projected measures: Theorem~\ref{th:mainbounded} can be deduced as an immediate corollary. Some highlights of the proof are given in Figure~\ref{fig:proofhighlights}.
\begin{theorem}\label{th:cvgce1}
Under the assumptions of  Theorem~\ref{th:mainbounded}, if $h^1(\mu_t)$ converges weakly, then its limit is a global minimizer of $F$ over $\M_+(\Omega)$ and $\lim_{t\to \infty} F(\mu_t)= F^*$.
\end{theorem}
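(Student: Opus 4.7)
The plan is to mirror the proof of Theorem~\ref{th:cvgce2} step for step, substituting each $2$-homogeneous tool by its partially $1$-homogeneous counterpart: Lemma~\ref{lem:cvgcev1} in place of Lemma~\ref{lem:cvgcev2}, Proposition~\ref{prop:escape1} in place of Proposition~\ref{prop:escape2}, and Lemma~\ref{lem:preservedbounded} in place of Lemma~\ref{lem:preservedhomogeneous}. Let $\nu\in \M(\Theta)$ denote the weak limit of $h^1(\mu_t)$.

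By Lemma~\ref{lem:cvgcev1}, $F'(\nu)$ vanishes $\nu$-almost everywhere. I argue by contradiction: if $\nu$ fails to minimize $F$ over $\M_+(\Omega)$ (identified with a measure on $\Theta$ via the lifting of Appendix~\ref{app:lifting}), then Proposition~\ref{prop:stationnary} forces $F'(\nu)$ to take a strictly negative value somewhere on $\Omega$. Applying Proposition~\ref{prop:escape1} then produces $\epsilon>0$ and an escape set $A=(\R_+\times K^+)\cup(\R_-\times K^-)$, with the regular values defining $K^\pm$ chosen so that at least one of $\mathrm{int}(K^+),\mathrm{int}(K^-)$ is a nonempty open subset of $\Theta$. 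Since $\{h^1(\mu_t)\}_t$ is uniformly bounded in total variation (the monotone decrease of $F$ bounds $\int |w|\tilde V\,d\mu_t$, and hence $|h^1(\mu_t)|(\Theta)$), weak convergence upgrades to $\Vert\cdot\Vert_\BL$-convergence, so I can pick $T$ with $\Vert h^1(\mu_t)-h^1(\nu)\Vert_\BL<\epsilon$ for every $t\geq T$.

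The crux is to produce $t_0\geq T$ with $\mu_{t_0}(A)>0$; Proposition~\ref{prop:escape1} will then yield $t_1>t_0$ with $\Vert h^1(\mu_{t_1})-h^1(\nu)\Vert_\BL\geq\epsilon$, the desired contradiction. By Lemma~\ref{lem:preservedbounded}, $\spt\mu_{t_0}$ separates $\{-r_0\}\times\Theta$ from $\{r_0\}\times\Theta$, so for any $\theta_0$ in the nonempty open interior of, say, $K^+$ the vertical segment $w\mapsto(w,\theta_0)$ must meet $\spt\mu_{t_0}$ at some $(w_\star,\theta_0)$. If $w_\star>0$, openness of $A$ immediately gives $\mu_{t_0}(A)>0$. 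Otherwise, I exploit that $\theta\mapsto F'(\nu)(1,\theta)$ and $\theta\mapsto F'(\nu)(-1,\theta)$ sum to $2\tilde V(\theta)\geq 0$, so strict negativity of the first on $K^+$ forces strict positivity of the second there; consequently the $w$-component of the velocity field on a neighbourhood of $(w_\star,\theta_0)$ inside $(\R_-\cup\{0\})\times K^+$ is bounded below by a positive constant as soon as $\mu_t$ is close to $\nu$ in $\Vert\cdot\Vert_\BL$ (using Lemma~\ref{lem:estimates1}), and flowing forward by a small time $\delta>0$ pushes positive mass into $\R_+\times K^+\subset A$, allowing me to replace $t_0$ by $t_0+\delta$. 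I expect this support-meets-$A$ verification to be the main technical obstacle: the analogous step in the $2$-homogeneous case is trivialised by radial dilation symmetry, whereas here it requires combining the preserved separation property with a careful sign analysis of the velocity field at the equator $\{0\}\times\Theta$, justifying the use of the full identity $F'(\nu)(1,\cdot)+F'(\nu)(-1,\cdot)=2\tilde V$.

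The final claim $\lim F(\mu_t)=F^*$ then follows from the monotonicity of $F(\mu_t)$ along the flow together with a lower semi-continuity argument analogous to the one used in the $2$-homogeneous case: writing $F(\mu_t)=R(\tint\phi\,d h^1(\mu_t))+\tint|w|\tilde V\,d\mu_t$, the first term converges to $R(\tint\phi\,d\nu)$ by weak convergence of $h^1(\mu_t)$ and continuity of $R$, while the second is bounded below by $\tint\tilde V\,d|h^1(\mu_t)|$ whose liminf is at least $\tint\tilde V\,d|\nu|$ (by weak lower semicontinuity of total variation). The resulting liminf equals $J(\nu)=F^*$ since $\nu$ minimizes $J$, and combined with the nonincreasing nature of $F(\mu_t)$ and its trivial lower bound $F^*$, the decreasing limit is pinned to $F^*$.
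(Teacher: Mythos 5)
Your overall architecture is the right one and matches the paper's (Lemma~\ref{lem:cvgcev1}, contradiction via Proposition~\ref{prop:stationnary}, the escape criterion of Proposition~\ref{prop:escape1}, and the preserved separation property of Lemma~\ref{lem:preservedbounded} to hit the escape set), and you have correctly located the crux: the vertical line $\R\times\{\theta_0\}$ may meet $\spt\mu_{t_0}$ only at some $(w_\star,\theta_0)$ with $w_\star\le 0$. But your resolution of that case has a genuine gap. First, ``flowing forward by a small time $\delta$'' cannot work: the velocity field is bounded on the region where the flow lives, so in time $\delta$ the $w$-coordinate increases by at most $C\delta$, and if $w_\star<0$ is bounded away from $0$ you need a time of order $\vert w_\star\vert/\eta$, not a small one. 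Second, and more seriously, during that ascent the $\theta$-coordinate of the trajectory drifts with speed proportional to $\vert w\vert$ times the gradient of $F'(\mu_t)(-1,\cdot)$, and nothing in your argument keeps the trajectory inside $K^+$; your positive lower bound on the $w$-velocity (the sign identity $F'(\nu)(1,\cdot)+F'(\nu)(-1,\cdot)=2\tilde V\ge 0$, which is correct) is only valid over $K^+$, so if the particle exits $K^+$ sideways before reaching $w=0$ the argument collapses. This is exactly why the paper does two extra things: it chooses $\theta_0$ to be a \emph{local minimum} of $g_\nu$ in the interior of $K^+$ relative to $\Theta$ (so $\nabla g_\nu(\theta_0)=0$, using the Neumann boundary condition when $\theta_0\in\partial\Theta$, and the $C^1$ behaviour at infinity when $\Theta=\R^{d-1}$), and it proves the quantitative ``ride-the-ridge'' Lemma~\ref{lem:rideridge}, a Gr\"onwall estimate showing that for $\epsilon$ small enough (exponentially small in $LM^2/\eta$) the trajectory started at $(w_0,\theta_0)$, $w_0\in[-M,0]$, reaches $w=0$ while staying within $r_0$ of $\theta_0$, hence inside $K^+$. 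There is also a further subtlety you skip: $M$ must be fixed at time $t_0$, before $t_1$ is known, and the paper has to argue (via the lower bound $\eta/2$ on the $w$-velocity over $\R_-\times K^+$ after $t_0$) that the intersection point of $\spt\mu_{t_1}$ with the vertical line cannot lie below $-M$.

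A secondary problem is your last paragraph. Showing $\liminf_t F(\mu_t)\ge J(\nu)=F^*$ by lower semicontinuity is the trivial direction (every $F(\mu_t)$ is $\ge F^*$ anyway), and combining it with monotonicity does not pin the decreasing limit to $F^*$: a nonincreasing sequence bounded below by $F^*$ may converge to any value above $F^*$. What is needed is the matching upper bound, which the paper obtains from the Wasserstein continuity of $F$ (Lemma~\ref{lem:Fcontinuous}); your argument as written does not deliver it. Also, your upgrade from weak to $\Vert\cdot\Vert_\BL$ convergence via a total-variation bound on $h^1(\mu_t)$ implicitly assumes $\inf\tilde V>0$, which is not part of Assumptions~\ref{ass:bounded}; this is a minor point (the paper is equally brief here), but worth noting.
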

\begin{proof}
Let $\nu$ be the weak limit of $h^1(\mu_t)$ and we see it as a measure on $\{1\} \times \Theta$. By Lemma~\ref{lem:cvgcev1}, $F'(\nu)$ vanishes $\nu$-a.e. For the sake of contradiction, assume that $\nu$ is not a minimizer of $F$ over $\M_+(\Omega)$: this implies that $F'(\nu)$ is not nonnegative. Let $A\subset \Omega$ and $\epsilon$ be the set and the radius of the $\Vert \cdot \Vert_\BL$-ball which are provided by Proposition~\ref{prop:escape2}. As $h^1(\mu_t)$ converges weakly, there exists $t_0>0$ such that for all $t>t_0$, $\Vert h^1(\mu_t)-\nu \Vert_\BL < \epsilon$. In the favorable case where $\mu_{t_0}(A)>0$ then we can conclude as in the $2$-homogeneous case, \emph{but this is not immediately guaranteed by Lemma~\ref{lem:preservedhomogeneous}}: the situation is thus trickier than in the proof of the $2$-homogeneous case.

We take notations from the proof of Proposition ~\ref{prop:escape1} and consider first the case when $\Theta$ is bounded. Let $\theta_0 \in K^+$ be a local minimum of $g_\nu$ in the interior of $K^+$ relatively to $\Theta$ (the case when $K^+$ is empty but $K^-$ is not could be treated similarly). Thanks to Neumann boundary conditions, it holds $\nabla g_\nu(\theta_0)=0$, even when $\theta_0$ lies on the boundary of $\Theta$. By Lemma~\ref{lem:preservedhomogeneous}, the line $\R \times \{\theta_0\}$ intersects the support of $\mu_{t_0}$. If this intersection lies in $\R_+\times K^+$, we can conclude immediately by Proposition~\ref{prop:escape2}. Otherwise, we fix $M>0$ such that $\mu_{t_0}$ is concentrated on $[-M,M]\times \Theta$ and we resort to applying Lemma~\ref{lem:rideridge} below. 

Let $r_0>0$ be such that $B(\theta_0,r_0)\cap \Theta \subset K^+$. By Lemma~\ref{lem:rideridge}, there exists $t_1>t_0$ such that if the support of $\mu_{t_1}$ intersects $[-M,0]\times \{\theta_0\}$ then it intersects $\R_+\times K^+$ at a subsequent time and again, we can conclude by Proposition~\ref{prop:escape2}. So it remains to check that the support of $\mu_{t_1}$ intersects $[-M,0]\times \{\theta_0\}$; the difficulty here is that $M$ was chosen prior to $t_1$.  The justification is as follows:  by Lemma~\ref{lem:preservedhomogeneous}, the support of $\mu_{t_1}$ intersects $\R_-\times \{\theta_0\}$ at a point $(w_0,\theta_0)$. The properties of $K^+$  imply that the pre-image by the flow $X_t$ of $(w_0,\theta_0)$ is included in $[-M,0[\times K^+$ and, since the $w$-component of the velocity field is lower bounded on $\R_-\times K^+$ by $\eta/2$ for $t>t_0$, one has $w_0>M$.

As for the case when $\Theta=\R^{d-1}$, we can reproduce the proof above by mapping the flow to the unit sphere, as done in the proof of Lemma~\ref{lem:preservedbounded}. The last claim of the theorem is a consequence of the continuity of $F$ (Lemma~\ref{lem:Fcontinuous}).
\end{proof}
In the unfavorable case encountered in the proof of Theorem~\ref{th:cvgce1}, we had to invoke the following lemma. It has a different nature than the other results of this paper because it relies on an explicit integration of the trajectories of the gradient flow, which means that it depends on the choice of the metric.
\begin{lemma}\label{lem:rideridge}
Consider, for a measure $\nu \in \M(\Theta)$, a point $\theta_0\in \Theta$ such that $\vert \nabla g_\nu(\theta)\vert =0$ and $g_\nu(\theta)\leq -\eta$ for some $\eta>0$. For any $M>0$ and $r_0>0$, there exists $T,\epsilon>0$ such that if $(\mu_t)_t$ is a Wasserstein gradient flow of $F$ that satisfies for all $t\in [0,T]$,  $\Vert g_{\mu_t}-g_\nu \Vert_{C^1} \leq \epsilon$ and denoting $(w(t),\theta(t))$ the solution of the flow of Lemma~\ref{lem:flowrepresentation} starting from $(w_0,\theta_0)$ with $w_0\in [-M,0]$, it holds $w(T)= 0$ and $\vert \theta(T)-\theta_0 \vert < r_0$.
\end{lemma}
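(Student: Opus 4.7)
The lemma reduces to a local ODE analysis of the trajectory $(w(t),\theta(t))$. First, I would specialize the velocity field~\eqref{eq:velocity} to $\Phi(w,\theta)=w\phi(\theta)$ and $V(w,\theta)=|w|\tilde V(\theta)$. A direct computation shows that on the region $\{w<0\}$, where $V$ is smooth,
\[
\dot w=2\tilde V(\theta)-g_{\mu_t}(\theta),\qquad \dot\theta=-w\nabla g_{\mu_t}(\theta)+2w\nabla\tilde V(\theta),
\]
so that $\dot w\ge-g_{\mu_t}(\theta)$ (since $\tilde V\ge 0$) and $|\dot\theta|\le |w|\bigl(|\nabla g_{\mu_t}(\theta)|+2|\nabla\tilde V(\theta)|\bigr)$.

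Next, I would localize around $\theta_0$. Because $g_\nu$ has a Lipschitz gradient (Assumptions~\ref{ass:bounded}), $\nabla g_\nu(\theta_0)=0$, and $g_\nu(\theta_0)\le-\eta$, a Taylor expansion produces a radius $r\in(0,r_0)$ on which $g_\nu\le-\eta/2$ and $|\nabla g_\nu|$ is as small as desired. The closeness hypothesis $\|g_{\mu_t}-g_\nu\|_{C^1}\le\epsilon$ then transfers these bounds uniformly in $t\in[0,T]$: for $\epsilon$ a small enough fraction of $\eta$, $g_{\mu_t}\le-\eta/4$ on $\overline{B(\theta_0,r)}\cap\Theta$, while $|\nabla g_{\mu_t}|$ stays correspondingly small.

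Finally, setting $T:=4M/\eta$, I would run a bootstrap argument. As long as $\theta(t)\in B(\theta_0,r)$ and $w(t)<0$, the lower bound $\dot w\ge\eta/4$ forces $w$ to hit $0$ at some first time $\tau\le T$, and since $|w(s)|\le|w_0|-(\eta/4)s$ on $[0,\tau]$, an elementary integration gives
\[
|\theta(\tau)-\theta_0|\le C\int_0^\tau|w(s)|\,\d s\le \frac{2CM^2}{\eta},
\]
where $C:=\sup_{B(\theta_0,r)}(|\nabla g_{\mu_t}|+2|\nabla\tilde V|)$. The hard part will be closing this bootstrap so that the right-hand side stays below $r$ and hence below $r_0$: the $|\nabla g_{\mu_t}|$ contribution to $C$ can be shrunk jointly with $r$ and $\epsilon$, but the $|\nabla\tilde V|$ contribution is only controlled by continuity of $\nabla\tilde V$ near $\theta_0$, so the proof must fit a joint quantitative choice of $r,\epsilon,T$ to the data $M,r_0,\eta$ and the local size of $\nabla\tilde V$ near $\theta_0$. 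In the applications of Section~\ref{sec:case}, $\tilde V$ is constant so this $\nabla\tilde V$ obstruction disappears, which makes the bootstrap straightforward to close and suffices for the uses of the lemma.
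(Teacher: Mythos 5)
Your reduction of the lemma to an ODE analysis of $(w(t),\theta(t))$ is the same route the paper takes, and your computation of the velocity on $\{w<0\}$ (namely $\dot w=2\tilde V(\theta)-g_{\mu_t}(\theta)$, $\dot\theta=-w\nabla g_{\mu_t}(\theta)+2w\nabla\tilde V(\theta)$) is correct. The genuine gap is precisely the step you defer: your drift estimate is $\vert\theta(\tau)-\theta_0\vert\le C\int_0^\tau\vert w(s)\vert\,\d s\le 2CM^2/\eta$ with $C=\sup_{B(\theta_0,r)}\bigl(\vert\nabla g_{\mu_t}\vert+2\vert\nabla\tilde V\vert\bigr)$, and the bootstrap closes only if $2CM^2/\eta<r$. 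Even in the favorable case where $\tilde V$ is constant, the only available bound on the remaining contribution is $\vert\nabla g_{\mu_t}\vert\le\epsilon+Lr$ on $B(\theta_0,r)$ (with $L$ a Lipschitz constant for $\nabla g_\nu$ and $\nabla g_\nu(\theta_0)=0$), so the requirement becomes $\tfrac{2M^2}{\eta}(\epsilon+Lr)<r$; letting $\epsilon\to 0$ this forces $2LM^2<\eta$, a smallness condition on the data that the lemma does not assume and that shrinking $r$ cannot produce, since both sides scale linearly in $r$ (taking $\epsilon$ proportional to $r$ changes nothing). So the claim that the constant-$\tilde V$ case is ``straightforward to close'' is not supported by the estimates you set up: no joint choice of $r,\epsilon,T$ makes a sup-over-the-ball bound work for large $M$ or $L$.

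The missing idea is to keep the pointwise feedback $\vert\nabla g_{\mu_t}(\theta(s))\vert\le\epsilon+Lq(s)$, where $q(s)=\vert\theta(s)-\theta_0\vert$, and integrate the resulting linear differential inequality: from $\dot q\le\vert w(s)\vert(\epsilon+Lq)$ and $\int_0^\tau\vert w\vert\,\d s\le 2M^2/\eta$, Gr\"onwall gives $q(\tau)\le\tfrac{\epsilon}{L}\bigl(e^{2LM^2/\eta}-1\bigr)$, which is below $r_0$ for $\epsilon$ small enough, with no restriction on $L$, $M$, $\eta$. This is exactly how the paper concludes: it reparametrizes the trajectory by $w$ (using $\dot w\ge\eta/2$, after reducing to $r_0<\eta/(4L)$) and applies Gr\"onwall to $\tilde q=\epsilon+Lq$, obtaining $q\le\epsilon\bigl(\exp(Lw_0^2/\eta)-1\bigr)/L$ and then choosing $\epsilon<Lr_0/\bigl(\exp(LM^2/\eta)-1\bigr)$. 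Your side remark on the term $2w\nabla\tilde V(\theta)$ is fair: the paper's differential inequalities silently drop it, which is harmless in all the applications (where $\tilde V$ is constant) and more generally whenever $\nabla\tilde V(\theta_0)=0$, since then $2\vert\nabla\tilde V(\theta)\vert\le 2L_{\tilde V}\,q$ can be absorbed into $L$; but flagging this obstruction does not substitute for the Gr\"onwall step, which is needed even when the obstruction is absent.
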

\begin{proof}
The Lipschitz regularity of $g_\nu$ and its derivative implies that there exists $L>0$ such that $\max \{ \vert g_\nu(\theta)-g_\nu(\theta_0)\vert, \vert \nabla g_\nu(\theta)-\nabla g_\nu(\theta_0)\vert\} \leq L\vert \theta - \theta_0\vert$ for all $\theta \in \Theta$. Without loss of generality, we assume that $r_0< \eta/(4L)$. Consider $\epsilon \in {]0,\eta/4[}$ and assume that there exists $\bar T>0$ such that $\Vert g_{\mu_t}-g_\nu \Vert_{C^1} \leq \epsilon$ for $t\in [0,\bar T]$. Writing $q(t)=\vert \theta(t)-\theta_0\vert$, it holds for $t\in [0,\bar T]$,
\begin{equation*}
\left\{
\begin{aligned}
\frac{dq}{dt} &\leq -w(\epsilon +Lq)\\
\frac{dw}{dt} & \geq \eta - \epsilon -Lq
\end{aligned}
\right.
\end{equation*}
In particular, if we can make sure that $\vert q(t)\vert<\bar r$ for $t\in [0,\bar T]$ and if $\bar T>2/\eta$ then, as $(dw/dt)\geq \eta/2$ on this interval, there exists $T<2/\eta$ such that $w(T)=0$. 

It remains to make sure that we indeed have $\vert q(t)\vert<\bar r$ for $t\in [0,T]$, by adjusting if necessary the value of $\epsilon$. Parametrizing in $w$ instead of $t$ (it is an admissible reparametrization thanks to the positive lower bound on its derivative), we get 
\[
d q /d w =(d q /d t) \cdot (dt/dw)\leq -w(\epsilon +Lq)\cdot 2/\eta.
\]
We can apply Gr\"onwall's lemma to $\tilde q(w) = \epsilon + Lq(w)$ which satisfies $(d/dw)\tilde q(w)\leq (-2L/\eta)\cdot w\cdot \tilde q (w)$ and obtain 
\[
\tilde q(w) \leq \tilde q(w_0) \exp\left( -(2L/\eta)\tint_{w_0}^0 s d s  \right) = \epsilon \exp(Lw_0^2/\eta) .
\]
Thus, choosing $\epsilon< Lr_0/(\exp(Lw_0^2/\eta)-1)$, it  is guaranteed that $q(t)\leq r_0$ for $t\in [0,T]$.
\end{proof}

\begin{figure}
\centering
\includegraphics[scale=0.6,trim=-2cm 0cm 0cm 1cm,clip]{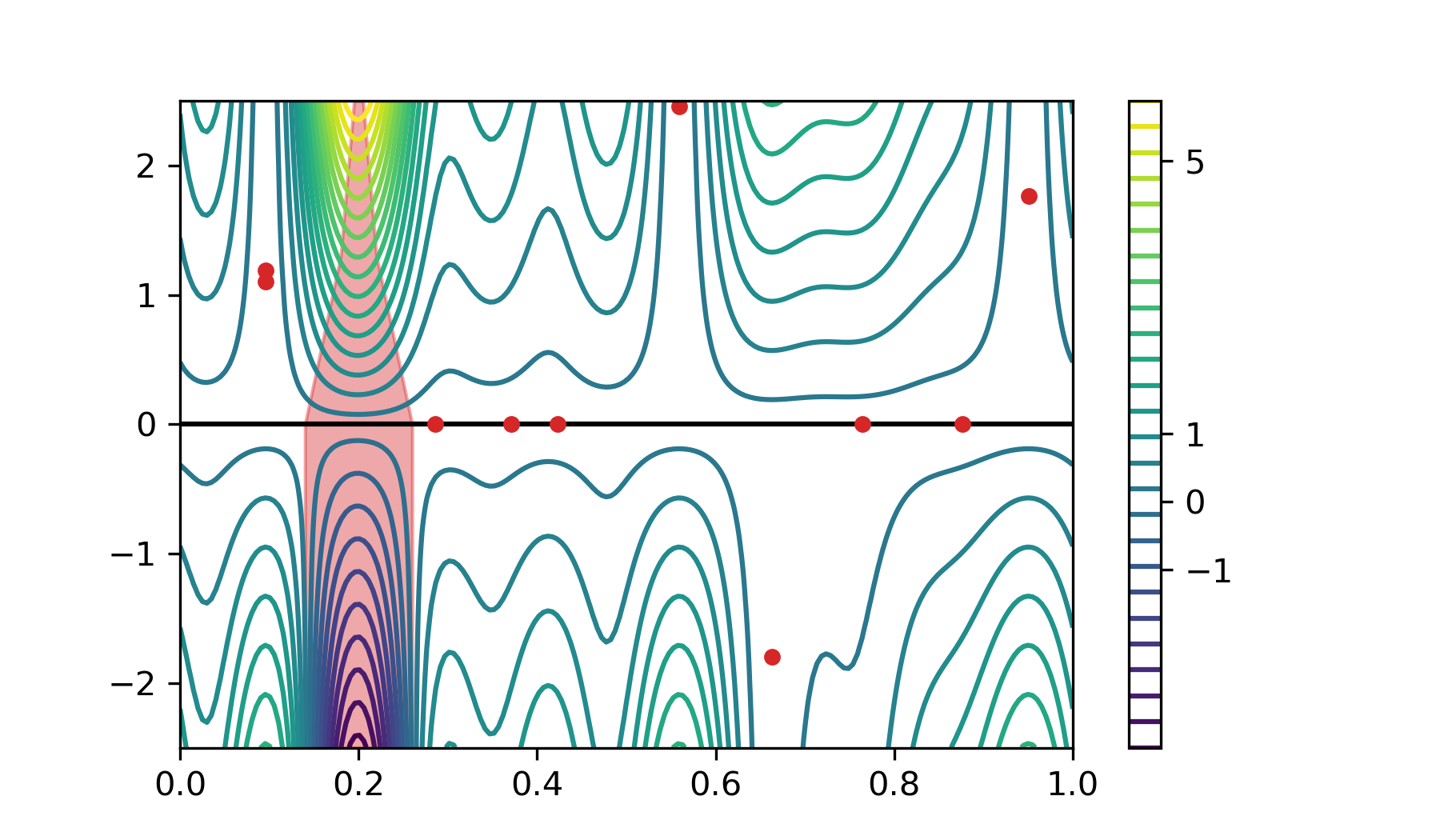}
\caption{Example, in the partially $1$-homogeneous setting, of a stationary point $\nu$ that is not a minimizer of $F$ over $\M_+(\Omega)$ since $F'(\nu)$ is not nonnegative. The variable $w$ corresponds to the vertical axis, the support of $\nu$ is the discrete set of red dots, and the lines show the level sets of $F'(\nu)$. There is a neighborhood of $\nu$ such that if a Wasserstein gradient flow $\mu_t$ enters this neighborhood and gives mass to a certain set (shown in red) then $\mu_t$ subsequently escapes this neighborhood. In Proposition~\ref{prop:escape1} this is proved for the lower part of this set (under the horizontal axis, where $F'(\nu)$ is negative). Technical Lemma~\ref{lem:rideridge} is concerned with building the upper part of this set. The proof of Theorem~\ref{th:cvgce1} lies on the fact that any measure satisfying the separation Property~\ref{ass:separationbounded} gives mass to this set.} \label{fig:proofhighlights}
\end{figure}

\subsection{Remarks}\label{app:GFconvergence}
We conclude this theoretical section with two opening remarks related to the global convergence theorems.
\paragraph{Convergence of the gradient flow.}
In the statements of Theorems~\ref{th:mainhomogeneous} and~\ref{th:mainbounded}, the convergence of the Wasserstein gradient flow comes as an assumption. In order to prove convergence of gradient flows, one generally needs two properties: (i) compactness of the trajectories and (ii) a so-called \emph{\L{}ojasiewicz inequality} which, intuitively, controls how much a function flattens around its critical points. As compactness in $W_2$ is a very strong requirement, we have relaxed the topology where convergence is required to obtain more reasonable assumptions. Yet, even when a gradient flow lies in a compact set, there are some cases where it does not converge. There has been recent progress on related issues with the study of \L{}ojasiewicz inequalities in Wasserstein space~\cite{blanchet2016family,hauer2017kurdyka}, but to our knowledge, no general result is known in our non-geodesically convex case. 

\paragraph{Towards quantitative statements.}
We stress that Propositions~\ref{prop:escape2} and~\ref{prop:escape1} provide with an intuitive criterion for a particle gradient flow to escape local minimum: roughly, it is sufficient that, when it passes close to a local minimum, at least one particle belongs to a $0$-sublevel set of the current potential $F'(\mu)$. In this paper we exploit this property by studying the many-particle limit, but other approaches are worth exploring. For instance, we could estimate the size of this sublevel set in specific cases, and use it as an indication for the particle-complexity to attain global minimizers. A discussion on a specific example is given in Section~\ref{app:details}.


\section{Case studies and numerical experiments}\label{app:case}
In this section, we verify the assumptions for the examples treated in Section~\ref{sec:case}.

\subsection{Loss functions}
We first give sufficient conditions to satisfy the assumptions on the loss $R$, when the Hilbert space is $\F = L^2(\rho)$ for a probability measure $\rho$ on a space $\X$, which is either a domain of $\R^d$ or the torus. In this setting, typical losses are the form $R(f) = \int r(x,f(x))\d \rho(x)$ for a function $r:\X\times\R\to\R_+$. The next lemma gathers some properties of such losses.
\begin{lemma}[Properties of the loss]\label{lem:loss}
If $r$ is convex in the second variable, then $R$ is convex. If $r$ is differentiable in the second variable with $\partial_2 r$ Lipschitz, uniformly in the first variable, then $R$ is differentiable with differential $dR$ Lipschitz. If moreover $\vert \partial_2 r\vert^2 \leq C_1 r + C_2$ for some constants $C_1,C_2>0$, then $dR$ is bounded on sublevel sets.
\end{lemma}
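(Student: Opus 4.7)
The plan is to reduce each claim to a pointwise statement about $r(x,\cdot)$ and then integrate against $\rho$. Convexity of $R$ is immediate: by pointwise convexity of $r$ in its second argument, the inequality $r(x, tf(x)+(1-t)g(x)) \leq t\, r(x,f(x))+(1-t)\,r(x,g(x))$ holds for $\rho$-a.e.\ $x$, and integrating in $x$ yields $R(tf+(1-t)g)\leq tR(f)+(1-t)R(g)$.

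For differentiability, I would propose as gradient the pointwise composition $R'(f)(x) := \partial_2 r(x, f(x))$. To justify this, apply the fundamental theorem of calculus in the second variable of $r$, then swap integrals by Fubini and control the Fr\'echet remainder via the uniform Lipschitz hypothesis. Explicitly,
\begin{equation*}
r(x, f(x)+h(x)) - r(x,f(x)) - \partial_2 r(x,f(x))\, h(x) = \int_0^1 \bigl( \partial_2 r(x, f(x)+s h(x)) - \partial_2 r(x,f(x))\bigr) h(x) \, \d s,
\end{equation*}
so the integrand is bounded in absolute value by $Ls|h(x)|^2$, with $L$ the Lipschitz constant of $\partial_2 r$ in its second argument. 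Integrating against $\rho$ yields the Fr\'echet estimate $|R(f+h)-R(f)-\langle R'(f),h\rangle_{L^2}| \leq (L/2)\|h\|^2_{L^2}$. The Lipschitz property of $dR$ then follows from a one-line pointwise computation:
\begin{equation*}
\|R'(f)-R'(g)\|_{L^2}^2 = \int |\partial_2 r(x,f(x))-\partial_2 r(x,g(x))|^2 \,\d\rho \leq L^2 \|f-g\|^2_{L^2}.
\end{equation*}

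For boundedness on sublevel sets, the quadratic growth hypothesis integrates trivially:
\begin{equation*}
\|R'(f)\|_{L^2}^2 = \int |\partial_2 r(x, f(x))|^2\, \d\rho \leq C_1 R(f) + C_2,
\end{equation*}
so $\|R'(f)\|\leq \sqrt{C_1 M+C_2}$ on any sublevel set $\{R\leq M\}$. The only mildly subtle step is checking that $R'(f)$ actually belongs to $\F=L^2(\rho)$ so that the Fr\'echet differential is well defined as an element of $\F$; this is automatic under the growth condition (when assumed), and otherwise follows from $|\partial_2 r(x,u)|\leq |\partial_2 r(x,0)|+L|u|$ together with the implicit integrability that makes $R$ finite on all of $\F$. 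I do not expect any real obstacle: the whole lemma is a routine exercise in differentiation under the integral sign.
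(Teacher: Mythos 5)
Your proof is correct and follows essentially the same route as the paper: the same Taylor/fundamental-theorem estimate giving the $\tfrac{L}{2}\Vert h\Vert^2$ Fr\'echet remainder, the same pointwise computation for the $L$-Lipschitz bound on $dR$, and the same one-line integration of the growth condition for boundedness on sublevel sets. Your added remark verifying that $R'(f)\in\F$ is a small point the paper leaves implicit, but it does not change the argument.
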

\begin{proof}
The convexity property is easy. If $\partial_2 r$ is $L$-Lipschitz (uniformly in the first variable), then it can be seen that $dR_f: h \mapsto \int r'(r,f(x))h(x)\d\rho(x)$ is the differential of $R$ because for all $f,h\in \F$, by a Taylor expansion,
\[
\vert R(f+h)-R(f)-dR_f(h)\vert \leq  \frac L2  \int \vert h(x)\vert^2 \d \rho(x) = \frac L2 \Vert h\Vert^2 = o(\Vert h\Vert).
\]
It is direct to see that $dR$ is $L$-Lipschitz in the operator norm. Finally, if $\vert \partial_2 r\vert^2 \leq C_1r+C_2$, then 
\[
\Vert dR_f\Vert^2 = \int \vert \partial_2 r(x,f(x))\vert^2\d \rho(x) \leq C_1 R(f) + C_2
\]
so $dR$ is bounded on sublevel sets.
\end{proof}

\subsection{Sparse deconvolution}

Let us show that Assumptions~\ref{ass:bounded} hold for the setting of Section~\ref{subsec:spikes}. While we did not mention explicitly the choice of the torus $ \Theta=\R^d/\mathbb{Z}^d$ as a domain in our results, it poses no difficulties: it is similar to the case $\Theta$ bounded, but without the difficulties related to boundaries. On the separable Hilbert space $\F = L^2(\Theta)$ where $\Theta$ is the $d$-torus endowed with the normalized Lebesgue measure, the loss $R$ is as in Lemma~\ref{lem:loss} with $r(x,f)= (f(y)-y(x))^2$ and the regularization term $\tilde V = 1$ trivially satisfies the assumptions. Let us turn our attention to the function $\phi(\theta): x \mapsto \psi(x-\theta)$. Its norm does not depend on $\theta$, so it is bounded. If $\psi$ is continuously differentiable with Lipschitz derivative, then $\phi$ is differentiable with $d\phi_\theta (\bar \theta): x \mapsto  \nabla \psi(x-\theta)\cdot \bar \theta$ which is bounded (again, its norm $\Vert d\Phi \Vert = \Vert \nabla \psi \Vert$ does not depend on $\theta$) and is Lipschitz, as similarly as in the proof of Lemma~\ref{lem:loss}.

It remains to check the Morse-type regularity assumption i.e., to check that for all $f\in \F$, the function $\theta \mapsto \langle f, \phi(\theta)\rangle = \int f(x) \psi(x-\theta) dx$ has a set of regular values which is dense in its range. If this function is constantly $0$ then this is trivially true, otherwise, its range is an interval of $\R$. By Morse-Sard's lemma, if this function is $d-1$-times continuously differentiable, then the set of critical values has zero Lebesgue measure and our assumption holds. By differentiating under the integral sign, this assumption is thus satisfied if $\varphi$ is $d-1$-times continuously differentiable.

\subsection{Neural network: sigmoid activation}\label{appsubsec:sigmoid}
Let us show that Assumptions~\ref{ass:bounded} hold for the setting presented in Section~\ref{subsec:neuralnet} in the case of sigmoid activation functions.
We write the disintegration of $\rho$ with respect to the variable $x$ as $\rho(\d x \otimes \d y) = \rho(\d y\vert x )\otimes \rho_x(\d x)$ where $\rho_x$ is the marginal of $\rho$ on $\X$ and $(\rho(\cdot \vert x))_{x \in \X}$ a family of conditional probabilities on $\R$ (see~\cite[Thm. 5.3.1]{ambrosio2008gradient}). On the separable Hilbert space $L^2(\rho_x)$, the loss $R$ is as in Lemma~\ref{lem:loss} with $r(x,p)= \int_\R \ell(p,y)\rho(\d y\vert x)$ and the regularization term $\tilde V = 1$ satisfies trivially the assumptions. In order to simplify notations, we consider the augmented variable $z = (x,1)\in \R^{d-1}$ and $\rho_z$ its distribution when $x$ is distributed according to $\rho_x$. Let $\phi(\theta): x \mapsto \sigma (z\cdot \theta)$, defined on $\Theta=\R^{d-1}$.
\begin{lemma}\label{lem:differentiablesigmoid}
If $\rho_x$ has finite moments up to order $4$, then the function $\phi: \R^{d-1}\to \F$ is differentiable with a Lipschitz and bounded differential $d\phi_\theta(h): x \mapsto (h \cdot z)\sigma'(z \cdot \theta)$ where $z=(x,1)$.
\end{lemma}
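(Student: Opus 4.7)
The plan is to use pointwise second-order Taylor expansion of the scalar sigmoid $\sigma$ and then integrate against $\rho_x$, translating moment bounds on $\rho_x$ into $L^2$ estimates on $\F=L^2(\rho_x)$.

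First I would record the key analytic facts about the sigmoid: $\sigma$, $\sigma'$, $\sigma''$ are all bounded on $\R$ (explicit bounds $\Vert\sigma\Vert_\infty\le 1$, $\Vert\sigma'\Vert_\infty\le 1/4$ and $\Vert\sigma''\Vert_\infty<\infty$ follow from $\sigma'=\sigma(1-\sigma)$ and differentiation). In particular, $\sigma'$ is $\Vert\sigma''\Vert_\infty$-Lipschitz. Using the augmented variable $z=(x,1)$, the candidate differential $L_\theta(h):x\mapsto (h\cdot z)\,\sigma'(z\cdot\theta)$ is clearly linear in $h$, and
\[
\Vert L_\theta(h)\Vert_{L^2(\rho_x)}^2 \;\le\; \Vert\sigma'\Vert_\infty^2\,|h|^2\int |z|^2\,\d\rho_x,
\]
which is finite (second moments of $\rho_x$ exist, since fourth moments do), yielding $\sup_\theta\Vert L_\theta\Vert<\infty$.

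Next I would verify that $L_\theta$ is indeed the Fréchet differential of $\phi$ at $\theta$. Pointwise Taylor's theorem with integral remainder gives, for each $x$,
\[
\sigma(z\cdot(\theta+h))-\sigma(z\cdot\theta)-(z\cdot h)\sigma'(z\cdot\theta) \;=\; r(x,\theta,h),\qquad |r(x,\theta,h)|\le \tfrac12\Vert\sigma''\Vert_\infty (z\cdot h)^2\le \tfrac12\Vert\sigma''\Vert_\infty |z|^2|h|^2.
\]
Squaring and integrating against $\rho_x$,
\[
\Vert \phi(\theta+h)-\phi(\theta)-L_\theta(h)\Vert_{L^2(\rho_x)}^2 \;\le\; \tfrac14\Vert\sigma''\Vert_\infty^2 |h|^4\int |z|^4\,\d\rho_x,
\]
which is finite by the fourth-moment assumption, hence $o(|h|^2)$. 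This gives differentiability with $d\phi_\theta=L_\theta$.

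For the Lipschitz property of $d\phi$, I would estimate
\[
\Vert (d\phi_{\theta_2}-d\phi_{\theta_1})(h)\Vert_{L^2(\rho_x)}^2 \;=\; \int (h\cdot z)^2\bigl(\sigma'(z\cdot\theta_2)-\sigma'(z\cdot\theta_1)\bigr)^2\,\d\rho_x
\]
and apply $|\sigma'(a)-\sigma'(b)|\le\Vert\sigma''\Vert_\infty|a-b|$ with $a-b=z\cdot(\theta_2-\theta_1)$, giving the bound $\Vert\sigma''\Vert_\infty^2|\theta_2-\theta_1|^2|h|^2\int|z|^4\,\d\rho_x$. Taking square roots yields the operator-norm Lipschitz estimate with constant $\Vert\sigma''\Vert_\infty\bigl(\int|z|^4\,\d\rho_x\bigr)^{1/2}$, finite by assumption. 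There is no real obstacle here; the only thing to watch is matching powers of $|z|$ with the available moments—the quadratic Taylor remainder and the Lipschitz estimate both produce a factor $|z|^2$ inside the $L^2(\rho_x)$ norm, which is exactly why fourth moments (rather than merely second) are needed.
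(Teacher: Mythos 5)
Your proposal is correct and follows essentially the same route as the paper: a pointwise Taylor estimate for $\sigma$ using the Lipschitz constant of $\sigma'$ (your $\Vert\sigma''\Vert_\infty$, the paper's $L$), integrated against $\rho_x$ with Cauchy--Schwarz, so that the remainder is controlled by the fourth moment and the boundedness and Lipschitzness of $d\phi$ by the second and fourth moments respectively. The only difference is cosmetic (you also record explicit bounds on $\sigma,\sigma',\sigma''$), so nothing further is needed.
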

\begin{proof}
Let us check that the function $d\phi$ defined above is indeed the differential of $\phi$. For $\theta,h \in \R^{d-1}$, we have
\begin{align*}
\Delta(h)^2 &:= \Vert \phi(\theta+h)-\phi(\theta)-d\phi_\theta(h)\Vert^2\\
&= \int_\X \vert \sigma(\theta\cdot z + h\cdot z) - \sigma(\theta \cdot z)- (h\cdot z)\sigma'(z\cdot \theta)\vert^2 \d \rho_z(z)\\
&\leq \frac{L^2}{4}\int_\X \vert h\cdot z\vert^4 \d \rho_z(z)
\end{align*}
where $L$ denotes the Lipschitz constant of $\sigma'$.
So if $\rho_z$ has finite $4$-th order moment $M_4(\rho_z)$ then $\Delta(h) \leq \frac{L\sqrt{M_4(\rho_z)}}{2}\vert h\vert^2$ and $d\phi$ is indeed the differential of $\phi$. This differential is bounded and Lipschitz since $\Vert d\phi_\theta\Vert\leq \Vert \sigma'\Vert_\infty \sqrt{M_2(\rho_z)}$ and $\Vert d\phi_\theta - d\phi_{\tilde \theta}\Vert \leq L\sqrt{M_4(\rho_z)}\vert \theta-\tilde \theta\vert$ for all $\theta,\tilde \theta \in \R^{d-1}$. Finally, it is clear that if $\rho_x$ has finite $4$-th moment then so does $\rho_z$.
\end{proof}

It remains to check the Sard-type regularity assumption i.e., to check that for all $f\in \F$, $\theta \mapsto \langle f, \phi(\theta)\rangle = \int_\X f(x) \sigma((x,1)\cdot \theta)\d \rho_x (x)$ has a set of regular values which is dense in its range. If this function is constantly $1$ then this is trivially true, otherwise, its range is an interval of $\R$. If $\rho_x$ has finite moments up to order $2d-2$ then the function above is $d-1$ continuously differentiable and the conclusion follows by Morse-Sard's lemma. 

In the statement of Proposition~\ref{prop:sigmoid}, the boundary assumption is explicitly mentioned so the proof is complete. We now briefly explain why is it difficult to check the Sard-type regularity in the boundary condition a priori. Consider the simple setting of a quadratic loss $R(f) = \frac12 \Vert f-f^*\Vert_\F^2$ where $f^*$ is the optimal Bayes regressor that we may assume smooth. As required in the boundary assumption, consider a function $f\in \F$ of the form $f = R'(\int \Phi \d \mu) = \int \Phi \d \mu - f^*$ for some $\mu$ in the domain of the functional $F$. In the limit $r \to \infty$, the function $g_f(r\theta) \coloneqq \langle f, \phi(r\theta)\rangle = \int f(x) \sigma(r\theta \cdot (x,1))d \rho_x(x)$ converges to the function $\bar g_f(\theta) = \int_{\theta \cdot (x,1) \geq 0} f(x) d \rho_x(x)$. This function is continuously differentiable on the sphere if the density of $\rho_x$ is in $C_0(\R^{d-2})$ and $f$ is bounded and continuous (this is the case here) and the convergence of $g_f(r\cdot)\to \bar g_f$ is indeed in $C^1$. However, we cannot guarantee a very high regularity for $f$ in general: differentiating under the integral sign $d-1$-times requires to have moments of order $(d-1)$ bounded for $\mu$, which cannot be assumed a priori ($\mu$ is just known to be in the domain of $F$). This prevents us from applying Morse-Sard's lemma.

\subsection{Neural network: ReLU activation}\label{app:ReLU}
\subsubsection{Classical parameterization}
We now consider the activation function $\sigma(s) = \max \{ 0,s\}$ and let $\Phi(w,\theta): x\in \R^{d-2} \mapsto w \sigma((x,1) \cdot \theta)$ be defined on $\R\times\R^{d-1}$. We show in the next lemma that $\Phi$ is not differentiable on the whole space: at points where the $\theta$ coordinate vanishes, there is a discontinuity in the derivative. The consequence of this Lemma is that particle gradient flows (Definition~\ref{def:classicalGF})---and a fortiori Wasserstein gradient flows---are not well-defined in this case. 
\begin{lemma}\label{lem:nondifferentiablerelu}
If $\rho_x$ has finite moments up to order $2$ and has a density, then the function $\Phi: \R^{d}\to \F$ is differentiable on the set $\{ (w,\theta)\in \R\times \R^{d-1}\;;\; \theta \neq 0\}$, with differential $d\Phi_{(w,\theta)}(\bar w,\bar \theta): x \mapsto (\bar w + w \bar \theta \cdot z) \sigma'(z \cdot \theta)$ where $z=(x,1)$ and $\sigma'$ is the Heaviside step function. Yet, the differential $d\Phi$ is discontinuous at points of the form $(w,0)$ for $w\neq 0$.
\end{lemma}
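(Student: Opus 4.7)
The plan is to establish the two assertions—differentiability on $\{\theta\neq 0\}$ and discontinuity of $d\Phi$ at $(w,0)$ with $w\neq 0$—separately.

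For differentiability at $(w,\theta)$ with $\theta\neq 0$, write $z=(x,1)$ and let $L$ denote the candidate linear operator given by the stated formula. I would bound the squared $L^2(\rho_x)$-norm of the remainder $\Phi(w+\bar w,\theta+\bar\theta)-\Phi(w,\theta)-L(\bar w,\bar\theta)$ by splitting the integration domain according to whether $z\cdot\theta$ and $z\cdot(\theta+\bar\theta)$ share the same sign. On the ``good'' set $G_{\bar\theta}=\{x:\operatorname{sign}(z\cdot(\theta+\bar\theta))=\operatorname{sign}(z\cdot\theta)\neq 0\}$, $\sigma$ is affine between the two arguments, so a direct expansion shows the pointwise remainder reduces to the bilinear cross term, bounded by $|\bar w|\,|z|\,|\bar\theta|$. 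On the complementary ``bad'' set $B_{\bar\theta}$, the pointwise remainder admits the crude bound $C(|w|+|\bar w|)\,|z|\,|\bar\theta|$, using that when signs flip both $|z\cdot\theta|$ and $|z\cdot(\theta+\bar\theta)|$ are at most $|z\cdot\bar\theta|\le |z||\bar\theta|$. The locus $H_\theta=\{x:z\cdot\theta=0\}$ is $\rho_x$-null because $\theta\neq 0$ and $\rho_x$ has a density, and $B_{\bar\theta}\subset\{x:|z\cdot\theta|\le|z||\bar\theta|\}$ shrinks to a subset of $H_\theta$ as $\bar\theta\to 0$. Integrating, the good-set contribution is $O(|\bar w|^2|\bar\theta|^2)$ by the finite second moment assumption on $\rho_z$, while the bad-set contribution is at most $|\bar\theta|^2\!\int_{B_{\bar\theta}}\!C(|w|^2+|\bar w|^2)|z|^2\,d\rho_x$, and the last integral tends to $0$ by dominated convergence (with $|z|^2\in L^1(\rho_x)$ dominating). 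Combining, the remainder is $o(|(\bar w,\bar\theta)|^2)$, establishing differentiability with the claimed formula.

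For the discontinuity at $(w,0)$ with $w\neq 0$, I would exhibit two rays approaching $(w,0)$ along which $d\Phi$ has different limits. Along $(w,\theta_n)=(w,t_n\theta^\star)$ with $t_n\downarrow 0$ and $\theta^\star\neq 0$ fixed, $\operatorname{sign}(z\cdot\theta_n)=\operatorname{sign}(z\cdot\theta^\star)$ for every $n$, so $d\Phi_{(w,\theta_n)}$ equals an $n$-independent operator $L^{\theta^\star}$ determined by the indicator $\mathbf 1\{z\cdot\theta^\star>0\}$. Choosing $\theta^\star=(0,\dots,0,1)$ makes this indicator identically $1$, while $\theta^\star=(0,\dots,0,-1)$ makes it identically $0$; the corresponding operators $L^+$ and $L^-$ are distinct, since $L^-\equiv 0$ whereas $L^+(\bar w,\bar\theta)$ is a nontrivial element of $\F$ whenever $(\bar w,\bar\theta)\neq 0$ (using $w\neq 0$ to rule out the degenerate case). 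Testing against any such $(\bar w,\bar\theta)$ with unit norm yields a positive lower bound on the operator-norm distance $\|L^+-L^-\|$, which forbids continuity of $(w,\theta)\mapsto d\Phi_{(w,\theta)}$ at $(w,0)$.

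The main obstacle is the bad-set estimate in the first step: making $\int_{B_{\bar\theta}}|z|^2\,d\rho_x\to 0$ as $\bar\theta\to 0$ requires both the density assumption on $\rho_x$ (so that $H_\theta$ is null and not merely of small measure) and the finite second-moment assumption (to produce an $L^1$ dominating function); without either, the good-set bound alone is only $O(|(\bar w,\bar\theta)|)$ and differentiability fails. By contrast, the discontinuity step is essentially immediate once one notices that $\sigma'$ is invariant under positive rescaling of $\theta$.
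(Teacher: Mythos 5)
Your proof is correct and takes essentially the same route as the paper's: your sign-agreement split is equivalent to the paper's tube $S_{\theta,\vert\bar\theta\vert}=\{x:\vert z\cdot\theta\vert\le\vert\bar\theta\vert\,\vert z\vert\}$ (your bad set is contained in it), the affine-region cross term and the dominated-convergence argument using the $\rho_x$-null hyperplane $\{z\cdot\theta=0\}$ with dominating function $\vert z\vert^2$ are handled identically, and your two-ray argument along $\pm t\,e_{d-1}$ is exactly the paper's computation of a $t$-independent lower bound on $\Vert d\Phi_{(w,t\theta)}-d\Phi_{(w,-t\theta)}\Vert$. The only nitpick is the parenthetical claim that $L^+(\bar w,\bar\theta)\neq 0$ for \emph{every} nonzero $(\bar w,\bar\theta)$, which is not needed (and not quite true); testing a single direction such as $\bar\theta=e_{d-1}$, $\bar w=0$ already gives the required positive lower bound $\vert w\vert$.
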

\begin{proof}
Let us verify that the properties of a Fr\'echet differential are satisfied by the function $d\Phi$ above. For $u=(w,\theta)$ such that $\theta \neq 0$ and $\bar u = (\bar w, \bar \theta)$ in $\R^d$, we have
\begin{align*}
\Delta^2_{u}(\bar u) &:= \Vert \Phi (u+\bar u)-\Phi(u)-d\Phi_{u}(\bar u)\Vert^2 \\
&= \int \vert f(u+\bar u,x)-f(u,x)- d f_{(u,x)}(\bar u,0)\vert^2\d \rho_x(x)
\end{align*}
where we have introduced the function $f:(u,x)\mapsto w\sigma(\theta\cdot(x,1))$ which is differentiable whenever $\theta\cdot (x,1)\neq 0$. For $\theta \in \R^{d-1}\setminus\{0\}$ and $\epsilon>0$, we introduce the sets $S_{\theta,\epsilon} = \{x \in \R^{d-2}\;;\; \vert \theta \cdot (x,1) \vert\leq \epsilon \vert (x,1)\vert\}$ and decompose the previous integral in two parts: one where $f$ is regular and the integrand can be controlled with second order terms, and another one that deals with the non-differentiability inside $S_{\theta,\epsilon}$. This choice of definition for $S_{\theta,\epsilon}$ guarantees that we have $(\theta + \bar \theta)\cdot (x,1) \neq 0$ whenever $x$ is not in $S_{\theta,\bar \theta}$. This leads to
\begin{align*}
\Delta^2_{(w,\theta)}(\bar w,\bar \theta) &\leq \int_{S_{\theta,\vert \bar \theta\vert}}  4\vert (x,1) \vert^2\cdot \vert \bar u \vert^2 \cdot (2\vert u \vert + \vert \bar u \vert))^2\d \rho_x(x)
+ \int_{\R^{d-2}\setminus S_{\theta,\vert \bar \theta\vert}} \vert \bar w \bar \theta \cdot (x,1)\vert^2 \d \rho_x(x)\\
& \leq 4 \vert \bar u \vert^2 \cdot (2\vert u \vert + \vert \bar u \vert))^2 \int_{S_{\theta,\vert \bar \theta\vert}}\vert (x,1)\vert^2\d\rho_x(x) +  \frac12 \vert \bar u \vert^4 \int_{\R^{d-2}\setminus S_{\theta,\vert \bar \theta\vert}} \vert (x,1)\vert^2\d \rho_x(x)
\end{align*}
If $\rho_x$ has finite  second order moment $M_2(\rho_x)$, then the second term is negligible in front of $\vert \bar u\vert^2$ when $\vert \bar u\vert$ goes to $0$. In order to have the same property for the first term, it is sufficient that the integral $\int_{S_{\theta,\vert \bar \theta\vert}}\vert (x,1)\vert^2\d\rho_x(x)$ goes to $0$ as $\vert \bar \theta\vert$ goes to $0$ which is the case since $\rho_x$ has a density. Therefore, under these conditions, $\d \Phi_{(w,\theta)}$ is the differential of $\Phi$ at $(w,\theta)$.  
To exhibit a discontinuity, let $w\neq 0$ and $\theta \in \S^{d-1}$. For $t>0$, it holds
\[
\Vert d\Phi_{(w,t\theta)} - d\Phi_{(w,-t\theta)}\Vert^2 \geq  \vert w\vert^2 \int \vert \theta \cdot (x,1)\vert^2 \d \rho_x(x).
\]
For suitable choices of $\theta$ (for instance, $\theta=(0_{\R^{d-1}},1)$), this lower bound is strictly positive and independent of $t$.
\end{proof}

Although we do not use this fact explicitly in the paper, it is interesting to note that the regularizing potential $V:(w,\theta) \mapsto \vert w\vert\cdot \vert \theta\vert$ is admissible in the $2$-homogeneous setting of Assumptions~\ref{ass:2homogeneous}: although it is not differentiable nor convex, it is positively $2$-homogeneous and semiconvex.
\begin{lemma}
The function $V:(w,\theta)\mapsto \vert w\vert \cdot \vert \theta\vert$ defined on $\R \times \R^{d-1}$ is positively $2$-homogeneous and semi-convex.
\end{lemma}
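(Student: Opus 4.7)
The plan is to handle the two properties separately, with 2-homogeneity being immediate and semi-convexity following from a simple algebraic rewriting.

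First, positive $2$-homogeneity is direct: for any $\lambda > 0$ and $(w,\theta) \in \R \times \R^{d-1}$, one has
\[
V(\lambda w, \lambda \theta) = |\lambda w| \cdot |\lambda \theta| = \lambda^2 |w| \cdot |\theta| = \lambda^2 V(w,\theta),
\]
so no difficulty here.

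For semi-convexity, I would exhibit an explicit constant $\lambda$ making $V + \lambda |\cdot|^2$ convex. The key observation is the algebraic identity
\[
V(w,\theta) + \tfrac{1}{2}\bigl(w^2 + |\theta|^2\bigr) = |w|\cdot|\theta| + \tfrac{1}{2}w^2 + \tfrac{1}{2}|\theta|^2 = \tfrac{1}{2}\bigl(|w| + |\theta|\bigr)^2,
\]
obtained by completing the square. From here, convexity of the right-hand side is standard: the map $(w,\theta)\mapsto |w|+|\theta|$ is a nonnegative convex function (sum of two convex functions), and $x\mapsto x^2$ is convex and nondecreasing on $[0,\infty)$. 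The composition is therefore convex, which shows that $V$ is $\tfrac{1}{2}$-semi-convex in the sense of the paper's footnote.

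I do not foresee any serious obstacle: the only thing to verify carefully is that the completion of the square works over the full domain (it does, since both $|w|$ and $|\theta|$ are nonnegative, so $|w|^2 = w^2$ and $|\theta|^2 = |\theta|^2$ without sign issues), and that we are applying the composition rule for convex functions correctly, which requires the outer function $x\mapsto x^2$ to be monotone on the range of the inner function---this is why the nonnegativity of $|w|+|\theta|$ matters. No smoothness, differentiability, or further structural analysis is needed, and the same identity would work for any $\lambda \geq \tfrac{1}{2}$.
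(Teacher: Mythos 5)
Your proof is correct and follows essentially the same route as the paper: both use the identity $V(w,\theta)+\tfrac12(w^2+|\theta|^2)=\tfrac12(|w|+|\theta|)^2$, the only cosmetic difference being that the paper concludes convexity because the right-hand side is the square of a norm, while you invoke the composition rule for a nondecreasing convex function with a nonnegative convex function. Both justifications are valid, so there is nothing to fix.
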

\begin{proof}
The homogeneity property is clear, and to see that $V$ is semi-convex, it is sufficient to remark that 
\[
(w,\theta)\mapsto V(w,\theta) + \frac12 \vert w \vert^2 + \frac12 \vert \theta\vert^2 = \frac12 (\vert \theta\vert + \vert w \vert)^2
\]
is convex, since it is the square of a norm.
\end{proof}

\subsubsection{A differentiable parameterization}
We now consider the alternative parameterization considered in Proposition~\ref{prop:relu}, defined as $\Phi(\theta) : x \mapsto \sigma (s(\theta)\cdot (x,1))$ where $\sigma(t)=\max\{t,0\}$ and $s$ is the signed square function $s(t)=t\vert t\vert = \sign(t)\cdot t^2$ that acts entry-wise. As $\Phi$ is clearly positively $2$-homogeneous so we just have to prove the differentiability of $\Phi$, which is done with the same technique as in Lemma~\ref{lem:nondifferentiablerelu}.

\begin{lemma}\label{lem:differentiablerelu}
If $\rho_x$ has finite moments up to order $2$ and has a density, then the function $\Phi: \R^{d}\to \F$ is differentiable, with differential $d\Phi_{\theta}(\bar \theta): x \mapsto 2(\sum_{i=1}^d \bar \theta_i \vert \theta_i\vert z_i) \sigma'(s(\theta)\cdot (x,1))$ where $\sigma'$ is the Heaviside step function.
\end{lemma}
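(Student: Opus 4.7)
The plan is to mimic the proof strategy of Lemma~\ref{lem:nondifferentiablerelu}, splitting the parameter space into a ``good'' region where the composition is smooth (allowing a second-order Taylor bound) and a ``bad'' region that contains the kink of $\sigma$ (whose contribution is controlled by absolute continuity of $\rho_x$). The crucial improvement over the classical parameterization is that the reparameterization $s(t)=t|t|$ is continuously differentiable on all of $\R$ with $s'(t)=2|t|$, so the chain rule formally yields
\[
d\Phi_\theta(\bar\theta): x\mapsto \Bigl(\sum_{i=1}^{d} \bar\theta_i \cdot 2|\theta_i|\cdot z_i\Bigr)\sigma'\bigl(s(\theta)\cdot(x,1)\bigr),
\]
which is precisely the candidate differential in the statement. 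Note in particular that at $\theta_i=0$ the factor $|\theta_i|$ vanishes, which is what will make $d\Phi$ continuous even across hyperplanes $\{\theta_i=0\}$, in contrast with the classical case.

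First I would verify that $\Phi(\theta)\in\F=L^2(\rho_x)$ and that $d\Phi_\theta$ defined as above is a bounded linear operator from $\R^d$ to $\F$: this is immediate from $|\sigma(t)|\le |t|$, $\sigma'\in\{0,1\}$, and the assumption $M_2(\rho_x)<\infty$. Then, for $\theta\in\R^d$ and a small perturbation $\bar\theta\in\R^d$, I would estimate
\[
\Delta_\theta(\bar\theta)^2 \;:=\; \bigl\Vert\Phi(\theta+\bar\theta)-\Phi(\theta)-d\Phi_\theta(\bar\theta)\bigr\Vert^2 = \int |g(\theta+\bar\theta,x)-g(\theta,x)-\partial_\theta g(\theta,x)\cdot\bar\theta|^2\d\rho_x(x),
\]
where $g(\theta,x):=\sigma(s(\theta)\cdot(x,1))$. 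Introduce the ``bad set''
\[
B_{\theta,\bar\theta} \;:=\; \bigl\{x\in\R^{d-2} \;:\; s(\theta)\cdot(x,1)\text{ and }s(\theta+\bar\theta)\cdot(x,1)\text{ have opposite signs}\bigr\}.
\]
Since $s$ is $C^1$, there exists a constant $C(\theta)$ such that $|s(\theta+\bar\theta)-s(\theta)|\le C(\theta)|\bar\theta|$ for small $|\bar\theta|$, so $B_{\theta,\bar\theta}\subset \{x:\,|s(\theta)\cdot(x,1)|\le C(\theta)|\bar\theta|\cdot|(x,1)|\}$, a set that shrinks to a hyperplane as $\bar\theta\to 0$.

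On the complement of $B_{\theta,\bar\theta}$, the map $\bar\theta\mapsto g(\theta+\bar\theta,x)$ is the composition of the $C^1$ map $\bar\theta\mapsto s(\theta+\bar\theta)\cdot(x,1)$ with the linear map $\sigma$ restricted to a half-line (where it is linear or zero). By a standard second-order Taylor estimate using the local Lipschitz modulus of $s'$, one obtains
\[
|g(\theta+\bar\theta,x)-g(\theta,x)-\partial_\theta g(\theta,x)\cdot\bar\theta|^2 \;\le\; C'(\theta)\,|\bar\theta|^4\,|(x,1)|^4 \quad\text{on }\R^{d-2}\setminus B_{\theta,\bar\theta}.
\]
Unfortunately this bound only controls fourth moments of $\rho_x$, which we do not have. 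To avoid requiring fourth moments, I would instead bound $|g(\theta+\bar\theta,x)-g(\theta,x)|$ and $|\partial_\theta g(\theta,x)\cdot\bar\theta|$ separately by $C''(\theta)|\bar\theta|\cdot|(x,1)|$, yielding the pointwise bound $|\cdot|^2\le 4C''(\theta)^2|\bar\theta|^2|(x,1)|^2$, and combine it with a refined Taylor remainder: outside $B_{\theta,\bar\theta}$ and outside a slightly larger neighborhood of the hyperplane $\{s(\theta)\cdot(x,1)=0\}$, $g$ is actually $C^2$ in $\theta$ with a locally uniform Hessian bound, giving a genuine $O(|\bar\theta|^4|(x,1)|^4)$ remainder; near the hyperplane (width $O(|\bar\theta|)$), the crude quadratic bound together with the density hypothesis gives contribution $o(|\bar\theta|^2)$.

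On $B_{\theta,\bar\theta}$, the integrand is bounded by $C''(\theta)^2|\bar\theta|^2|(x,1)|^2$, so
\[
\int_{B_{\theta,\bar\theta}} |\cdot|^2\d\rho_x \;\le\; C''(\theta)^2|\bar\theta|^2 \int_{B_{\theta,\bar\theta}}|(x,1)|^2\d\rho_x(x).
\]
Since $\rho_x$ has a density and $B_{\theta,\bar\theta}$ collapses to a Lebesgue-null hyperplane as $\bar\theta\to 0$, the integral $\int_{B_{\theta,\bar\theta}}|(x,1)|^2\d\rho_x(x)\to 0$ by dominated convergence (dominated by $|(x,1)|^2$ which is $\rho_x$-integrable). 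Hence $\Delta_\theta(\bar\theta)^2=o(|\bar\theta|^2)$, proving Fr\'echet differentiability.

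The main obstacle is the handling of the bad set $B_{\theta,\bar\theta}$: one must carefully use the density of $\rho_x$ to show its contribution is $o(|\bar\theta|^2)$ rather than merely $O(|\bar\theta|^2)$; this is the same delicate point as in Lemma~\ref{lem:nondifferentiablerelu}, but here it succeeds because the regularization by $s$ ensures the approximation error outside $B_{\theta,\bar\theta}$ is already $o(|\bar\theta|)$ (rather than $O(|\bar\theta|)$ as in the classical case near $\{\theta=0\}$), and crucially because the candidate differential $d\Phi_\theta$ is now continuous in $\theta$, as $2|\theta_i|\to 0$ when $\theta_i\to 0$.
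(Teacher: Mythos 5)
Your overall architecture (split into a shrinking ``bad'' set containing the kink, handled by the density of $\rho_x$ via dominated convergence, plus a ``good'' region handled by a Taylor estimate) is the same as the paper's, but the good-region estimate, which is the heart of the lemma, does not close as you wrote it. You correctly sense the danger of a fourth-moment requirement, yet your fix still asserts an $O(|\bar\theta|^4|(x,1)|^4)$ remainder on the outer region and never explains how to integrate $|(x,1)|^4$ against $\rho_x$, which has only finite second moments; so the final claim $\Delta_\theta(\bar\theta)^2=o(|\bar\theta|^2)$ does not follow from the bounds you actually establish. The point you are missing is that the Hessian here is \emph{linear}, not quadratic, in $|(x,1)|$: since $\sigma$ is piecewise linear, all the curvature comes from $s$, whose derivative $s'(t)=2|t|$ is $2$-Lipschitz, so coordinatewise $\vert s(\theta_i+\bar\theta_i)-s(\theta_i)-2\vert\theta_i\vert\bar\theta_i\vert\leq \bar\theta_i^2$, hence on the region where the sign of $s(\cdot)\cdot(x,1)$ does not flip,
\[
\bigl\vert g(\theta+\bar\theta,x)-g(\theta,x)-\partial_\theta g(\theta,x)\cdot\bar\theta\bigr\vert \;\leq\; \vert\bar\theta\vert^2\,\vert(x,1)\vert,
\]
whose square is $O(\vert\bar\theta\vert^4\vert(x,1)\vert^2)$ and integrates under the second-moment assumption alone. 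This is exactly the estimate the paper uses (and the reason it remarks that ReLU needs weaker moments than the sigmoid case, where the chain rule genuinely produces $\vert(x,1)\vert^2$ in the Hessian).

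A second, smaller gap: your argument excludes $\theta=0$. There $s(\theta)=0$, so the set $\{x:\ \vert s(\theta)\cdot(x,1)\vert\leq C\vert\bar\theta\vert\,\vert(x,1)\vert\}$ is all of $\R^{d-1}$, it does not collapse to a Lebesgue-null hyperplane, and the dominated-convergence step fails. Differentiability at $0$ (with differential $0$, consistent with your formula since $\vert\theta_i\vert=0$) must be handled separately; the paper does it in one line by positive $2$-homogeneity, e.g.\ $\Vert\Phi(\bar\theta)\Vert=\vert\bar\theta\vert^2\,\Vert\Phi(\bar\theta/\vert\bar\theta\vert)\Vert\leq C\vert\bar\theta\vert^2=o(\vert\bar\theta\vert)$, the constant being finite thanks to the second moments. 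With these two repairs your proof coincides with the paper's.
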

\begin{proof}
As in Lemma~\ref{lem:nondifferentiablerelu}, we verify that the properties of a Fr\'echet differential are satisfied by the function $d\Phi$ above. First, $\Phi$ is differentiable at $0$ with differential $0$ since it is positively $2$-homogeneous. For $\theta \neq 0$ and $\bar \theta$ in $\R^d$, we have
\begin{align*}
\Delta^2_{\theta}(\bar \theta) &:= \Vert \Phi (\theta+\bar \theta)-\Phi(\theta)-d\Phi_{\theta}(\bar \theta)\Vert^2 \\
&= \int \vert f(\theta + \bar \theta,x)-f(\theta,x)- d f_{(\theta,x)}(\bar \theta,0)\vert^2\d \rho_x(x)
\end{align*}
where we have introduced the function $f:(\theta,x)\mapsto \sigma(s(\theta)\cdot(x,1))$ which is differentiable whenever $s(\theta) \cdot (x,1)\neq 0$. For $\theta \in \R^{d}\setminus\{0\}$ and $\epsilon>0$, we introduce the sets $S_{\theta,\epsilon} = \{x \in \R^{d-1}\;;\; \vert s(\theta) \cdot \vert (x,1)\vert  \leq \epsilon \vert (x,1)\vert \}$ and decompose the previous integral in two parts: one where $f$ is regular and the integrand can be controlled with second order terms (through Taylor-Lagrange inequality), and another one that deals with the non-differentiability inside $S_{\theta,\vert \theta\vert}$ (where $f$ is only Lipschitz, locally in $\theta$ and globally in $(x,1)$). This leads to the bounds, for some constants $C_\theta, C'_\theta >0$ and $\vert \bar \theta \vert$ small enough
\begin{align*}
\Delta^2_{\theta}(\bar \theta) &\leq C_\theta \vert \bar \theta\vert^2 \int_{S_{\theta,\vert \bar \theta\vert}} \vert (x,1)\vert^2  \d \rho_x(x)
+ C'_\theta \vert \bar \theta \vert^4 \int_{\R^{d-1}\setminus S_{\theta,\vert \bar \theta\vert}} \vert (x,1)\vert^2 \d \rho_x(x)
\end{align*}
Under the assumption that $\rho_x$ has a density, we have that $\Delta^2_{\theta}(\bar \theta) = o(\vert \bar \theta\vert^2)$. Therefore, $\d \Phi_{\theta}$ is the differential of $\Phi$ at $\theta$.
\end{proof}
Note that the condition on the moments of $\rho_x$ is less strong for ReLU activation than for sigmoids in Lemma~\ref{lem:differentiablesigmoid}: this comes from the fact that ReLU is piece-wise linear. Similarly as what explained in the end of Section~\ref{appsubsec:sigmoid}, it is difficult to verify the Sard-type regularity assumption so it is left as an assumption in Proposition~\ref{prop:relu}.


\subsection{Numerical experiments : details and additional results}\label{app:details}

Animated plots of the particle gradient flows shown in this article may be found online at \url{https://lchizat.github.io/PGF.html}\footnote{These videos appear at this place in the official supplementary material of the NIPS 2018 publication, but had to be removed from the present version due to software incompatibility.}

\paragraph{Setting for the empirical particle-complexity plot.}
Here we give more details on the numerical experiments behind Figure~\ref{fig:particlecomplexity}. 
\begin{enumerate}
\item
For the leftmost panel, the setting is similar to that of Figure~\ref{fig:spikes}: for each realization, $5$ spikes are randomly distributed on the $1$-torus (with a minimum separation of $0.1$) with random weights between $0.5$ and $1.5$ and a small noise is added to the filtered signal. Then for each choice of $m$, we initialize $m$ particles on a regular grid on $\{0\} \times \Theta$ and integrate the particle gradient flow with the forward-backward algorithm until the improvement per iteration is below a small tolerance threshold.
\item
For the center panel, the setting is similar to that of Figure~\ref{fig:ReLU}, but here in dimension $d=100$. The data is normally distributed and the ground truth labels are generated by a similar neural network with $20$ neurons (with random normally distributed parameters). The objective function is the square loss without regularization, so the global minimum corresponds to a $0$ loss. We optimize using SGD with fresh samples at each iteration.
\item
The rightmost panel shows, similarly, the particle-complexity for training a neural network with a single hidden layer and sigmoid activation function, in dimension $d=100$. The data is distributed on a sphere and the ground truth labels are generated by a similar neural network with $20$ neurons with random normal weights.  Again, we minimize with SGD the square loss without regularization and the global minimum corresponds to a $0$ loss. 
\end{enumerate}

We compare the performance with the method of simply minimizing on the weights with the same initialization. This is a convex problem, and the minimum value attained does not depend on the minimization method. 
We plot for each case the final excess loss as a function of $m$ for several random realizations of the experiment and, for each value of $m$, its geometric average over all realizations.
We have indicated in transparent green the area of loss values which should be interpreted as ``optimal'' but are not exactly $0$ because the optimization has been stopped in finite time and the loss is not known exactly but estimated through sampling.

\paragraph{Choice of the initial weights in the partially $1$-homogeneous case.}
In all previous numerical experiments dealing with the partially $1$-homogeneous case, we have initialized the particle gradient flow on a discretization of $\{0\}\times \Theta$. But Theorem~\ref{th:mainbounded} allows for a large variety of initialization patterns. In this paragraph, we comment on the various possibilities and explain how the proof of Theorem~\ref{th:mainbounded} helps understanding why the corresponding particle-complexity is impacted.

We display on Figure~\ref{fig:offset} a sparse spikes deconvolution experiment, in a similar setting than in Figure~\ref{fig:spikes}, but with different initializations. For this problem, where $m_0=5$ spikes are to be recovered, we have observed numerically that the particle gradient flows initialized on a uniform grid on $\{0\}\times \Theta$ succeed in finding a global minimizer as soon as there are more than $m= 7$ particles. In the first panel of Figure~\ref{fig:offset}, the particle gradient flow with $m=15$ particles initialized on $\{1\}\times \Theta$ fails at finding a minimizer and a larger number of particles is needed for success (as shown in the center panel, with $m=30$). 

This phenomenon can be understood in light of the proof of Theorem~\ref{th:mainbounded}: when $(\mu_t)_t$ enters the neighborhood (given by Proposition~\ref{prop:escape1}) of the local minimum $\nu$ reached in the left panel, say at $t_0>0$, there exists a set $\R^-\times K^-$ such that if a particle of $\mu_{t}$ for $t>t_0$ falls in this set, then $(\mu_t)_t$ eventually escapes from this local minimum $\nu$. This set is, to put it simply, a $0$-sublevel set of the function $F'(\nu)$, which is a positively $1$-homogeneous function in the weight coordinate (the vertical axis in Figure~\ref{fig:offset}). The difficulty here is that, because of the initialization, $\mu_{t_0}$ is concentrated on $\R_+\times \Theta$, so we can only hope that a particle ``slides'' on a ridge of $F'(\nu)$ to eventually reach the set $\R^-\times K^-$. This is guaranteed to happen in the many-particle limit (this is the object of Lemma~\ref{lem:rideridge}), but this is likely to require a high density of particles around ridges of $F'(\nu)$ (the set $\R\times \{\theta_0\}$ in the proof of Theorem~\ref{th:mainbounded}). This supports the idea that initializing on $\{0\}\times \Theta$ is a good choice. In the rightmost panel of Figure~\ref{fig:offset} we also show the behavior for a uniform initialization on $(\{1\}\times \Theta) \cup (\{-1\}\times \Theta)$ which, in this example, also avoids the difficulty described above. 

\begin{figure}[h]
\centering
\begin{subfigure}{0.32\textwidth}
\includegraphics[scale=0.44,trim=0.2cm 0.2cm 0.2cm 0.2cm,clip]{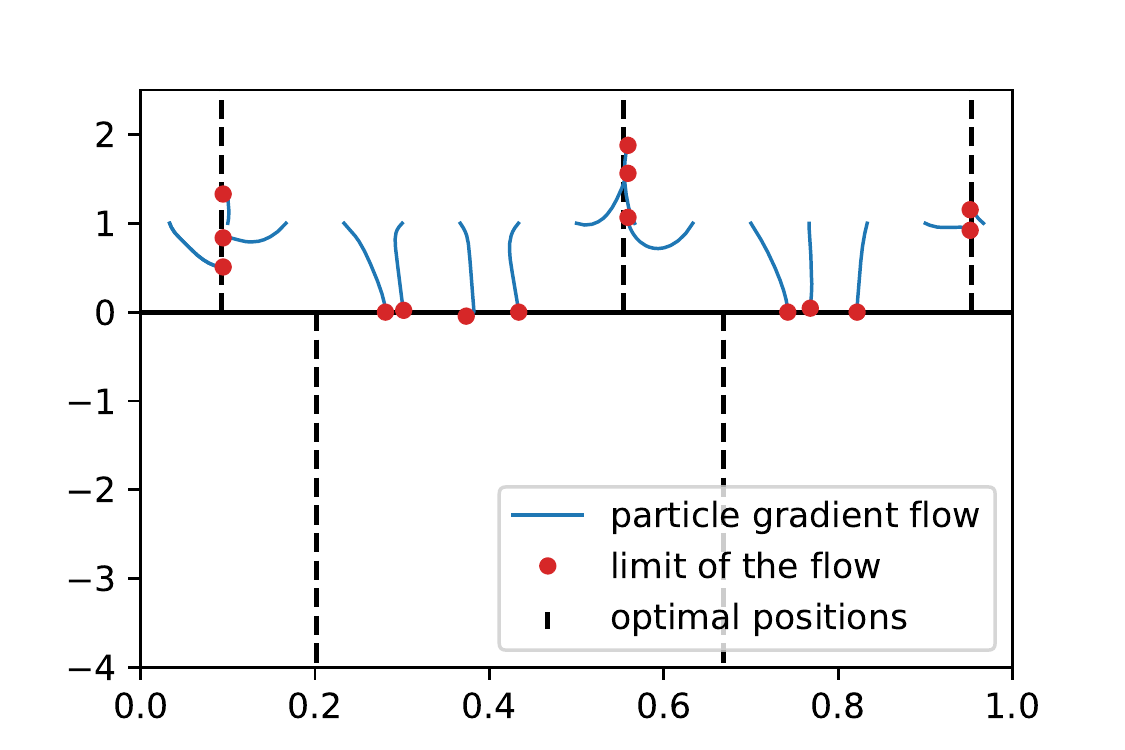}
\end{subfigure}
\begin{subfigure}{0.32\textwidth}
\includegraphics[scale=0.44,trim=0.2cm 0.2cm 0.2cm 0.2cm,clip]{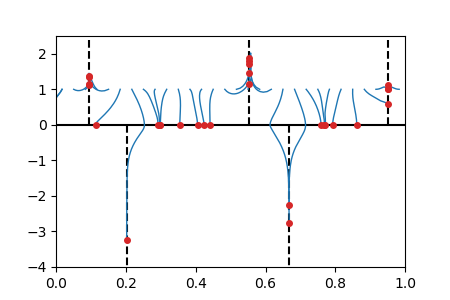}
\end{subfigure}
\begin{subfigure}{0.32\textwidth}
\includegraphics[scale=0.44,trim=0.2cm 0.2cm 0.2cm 0.2cm,clip]{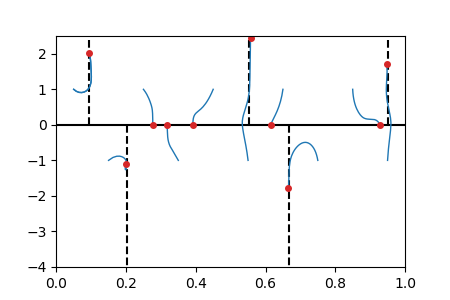}
\end{subfigure}
\caption{Particle gradient flow for partially $1$-homogeneous problems (sparse spikes recovery): effect of the initialization pattern on the particle-complexity. (left) $m=15$ particles on  $\{1\}\times \Theta$: failure (center) $m=30$ particles on $\{1\}\times \Theta$: success (right) $m=10$ particles on $(\{1\}\times \Theta) \cup (\{-1\} \times \Theta)$: success.}
\label{fig:offset}
\end{figure}

\end{document}